\documentclass[10pt]{amsart}
\usepackage[T1]{fontenc}
\usepackage{geometry}
\usepackage[latin1] {inputenc}
\usepackage{amsmath}
\usepackage{amsfonts, amssymb, textcomp}
\usepackage[colorlinks=flase, linkcolor=red,urlcolor=green, citecolor=blue]{hyperref}
\usepackage{subeqnarray}

\usepackage{latexsym}
\usepackage{fancyhdr}
\usepackage{longtable}
\usepackage{amsmath, amssymb}
\usepackage{graphicx}
\usepackage{enumerate}

\setlength{\parindent}{0mm}

\numberwithin{equation}{section}


\theoremstyle{plain}
\newtheorem{proposition}{Proposition}[section]
\newtheorem{theorem}[proposition]{Theorem}
\newtheorem{lemma}[proposition]{Lemma}
\newtheorem{corollary}[proposition]{Corollary}
\newtheorem{definition}[proposition]{Definition}
\newtheorem{example}[proposition]{Example}

\newtheorem{remark}[proposition]{Remark}

\newcommand{\RR}{\mathbb{R}}
\newcommand{\CC}{\mathbb{C}}
\newcommand{\NN}{\mathbb{N}}

\let\on=\operatorname

\makeatletter
\@namedef{subjclassname@2020}{%
	\textup{2020} Mathematics Subject Classification}
\makeatother

\newsavebox{\fmbox}
\newenvironment{fmpage}[1]
{\begin{lrbox}{\fmbox}\begin{minipage}{#1}}
		{\end{minipage}\end{lrbox}\fbox{\usebox{\fmbox}}}


\title{On generalized definitions of ultradifferentiable classes}


\author[J.~Jim\'{e}nez-Garrido, D.N.~Nenning, and G.~Schindl]{Javier Jim\'{e}nez-Garrido, David Nicolas Nenning and Gerhard Schindl}


\address{J.~Jim\'{e}nez-Garrido: Departamento de Matem\'aticas, Estad\'istica y Computaci\'on,
	Universidad de Cantabria, Avda. de los Castros, s/n, 39005 Santander, Spain.
	 Instituto de Investigaci\'on en Matem\'aticas IMUVA, Universidad de Valladolid, Spain.}
\email{jesusjavier.jimenez@unican.es}

\address{D. N.~Nenning: Fakult\"at f\"ur Mathematik, Universit\"at Wien, Oskar-Morgenstern-Platz~1, A-1090 Wien, Austria.}
\email{david.nicolas.nenning@univie.ac.at}


\address{G.~Schindl: Fakult\"at f\"ur Mathematik, Universit\"at Wien, Oskar-Morgenstern-Platz~1, A-1090 Wien, Austria.}
\email{gerhard.schindl@univie.ac.at}

\begin{document}
	
	\begin{abstract}
		We show that the ultradifferentiable-like classes of smooth functions introduced and studied by S. Pilipovi\'{c}, N. Teofanov and F. Tomi\'{c} are special cases of the general framework of spaces of ultradifferentiable functions defined in terms of weight matrices in the sense of A. Rainer and the third author. We study classes ``beyond geometric growth factors'' defined in terms of a weight sequence and an exponent sequence, prove that these new types admit a weight matrix representation and transfer known results from the matrix-type to such a non-standard ultradifferentiable setting.
	\end{abstract}
	
	\thanks{J. Jim\'{e}nez-Garrido is supported by project PID2019-105621GB-I00, D.N. Nenning and G. Schindl are supported by FWF-Project P33417-N}
	\keywords{Classes of ultradifferentiable functions, weight sequences and weight matrices}
	\subjclass[2020]{26A12, 26A48, 46A13, 46E10}
	\date{\today}
	
	\maketitle
	
	\section{Introduction}\label{Introduction}
	Spaces of ultradifferentiable functions are sub-classes of smooth functions with certain restrictions on the growth of their derivatives. Two classical approaches are commonly considered, either the restrictions are expressed by means of a weight sequence $M=(M_j)_j$, also called {\itshape Denjoy-Carleman classes} (e.g. see \cite{Komatsu73}), or by means of a weight function $\omega$ also called {\itshape Braun-Meise-Taylor classes}; see \cite{BraunMeiseTaylor90}. More precisely (in the one-dimensional case) for each compact set $K$, the sets
	\begin{equation}\label{introequ}
		\left\{\frac{f^{(j)}(x)}{h^jM_j}\,\,:\,\, j\in\NN,\,\, x\in K \right\}, \quad \text{respectively}\quad \left\{\frac{f^{(j)}(x)}{\exp(\frac{1}{h}\varphi^{*}_\omega(hj))}\,\,:\,\, j\in\NN,\,\, x\in K \right\},
	\end{equation}
	are required to be bounded, where $\varphi^{*}_\omega$ denotes the Young-conjugate of $t\mapsto \omega(e^t)$. We shall mention that in the second situation the classes can be defined directly by using $\omega$ and controlling the decay of the Fourier transform $\widehat{f}$ with growth factors $t\mapsto\exp(h\omega(t))$, $h>0$. In fact, this is the original description; see \cite{Bjorck66} and also the discussion in \cite{BraunMeiseTaylor90} where the original approach is transferred to the boundedness condition expressed in \eqref{introequ}.
	
	In the literature standard growth and regularity conditions are assumed for $M$ and $\omega$ and in both settings we can consider two different types of spaces: For the {\itshape Roumieu-type} the boundedness of the sets in \eqref{introequ} is required for some $h>0$, whereas for the {\itshape Beurling-type} it is required for all $h>0$.
	
	The most well-known examples are the {\itshape Gevrey sequences} of type $\alpha>0$ with $G^{\alpha}_j:= j^{\alpha j}$ for $j\in \NN$ (or equivalently use $M^{\alpha}_j:=j!^{\alpha}$). Alternatively, one can use the function $t\mapsto t^{1/\alpha}=:\omega_{\alpha}(t)$.\vspace{6pt}
	
	It is then a natural question how both classical settings are related. In \cite{BonetMeiseMelikhov07} this problem is studied and it has been shown that in general both approaches are mutually distinct. However, based on this work, in \cite{dissertation} and \cite{compositionpaper} A. Rainer and the third author have introduced the notion of {\itshape weight matrices} $\mathcal{M}=\{M^{(x)}: x>0\}$ which allows to treat both classical methods in a unified way and to transfer proofs from one context to the other. This can be achieved when considering $\mathcal{M}=\{M\}$ for the weight sequence and the so-called {\itshape associated weight matrix} $\mathcal{W}:=\{W^{(\ell)}: \ell>0\}$ with $W^{(\ell)}_j:=\exp(\frac{1}{\ell}\varphi^{*}_\omega(\ell j))$ in the weight function case. But one is also able to describe more classes, e.g. take the Gevrey matrix $\mathcal{G}:=\{G^{\alpha}: \alpha>1\}$; see \cite[Thm. 5.22]{compositionpaper}.\vspace{6pt}

	A second recent generalization was presented by S. Pilipovi\'{c}, N. Teofanov and F. Tomi\'{c}; see \cite{PTT2015}. For given parameters $\tau>0$ and $\sigma>1$ they consider the sequence $M^{\tau,\sigma}_j:= j^{\tau j^\sigma}$. However, in their definition the geometric growth factor $h^j$ appearing in \eqref{introequ} is replaced by $h^{j^{\sigma}}$. Observe that the growth of $j\mapsto h^{j^{\sigma}}$  is closely connected with $j\mapsto M^{\tau,\sigma}_j$. The authors called their framework ``beyond Gevrey regularity''  because  $M^{\tau,1}_j= j^{\tau j}$ for $\sigma=1$, i.e., the Gevrey sequence of type $\tau>0$.  Since all the classes considered in this work are, in some sense, generalizations of Gevrey classes, these spaces will be called {\itshape Pilipovi\'{c}-Teofanov-Tomi\'{c} classes}, or PTT-classes for short. \vspace{6pt}
	
	
	The difference between the growth of $j\mapsto h^j$ and $j\mapsto h^{j^{\sigma}}$ suggests that the PTT-classes can be viewed as ``non-standard ultradifferentiable classes'' and one can ask how both generalizations are related. In the introduction of \cite{PTT2021} it was claimed that the PTT-classes are not covered by the weight matrix approach which is due to the different growth of the factors mentioned before. However, the aim of this paper is to show that also the PTT-classes are contained in the weight matrix approach. \vspace{6pt}
	
	In fact, we treat a more abstract setting by considering an {\itshape exponent sequence} $\Phi=(\Phi_j)_{j\in\NN}$ and by replacing in \eqref{introequ} the growth $j\mapsto h^j$ by $j\mapsto h^{\Phi_j}$. This notion yields ``ultradifferentiable classes beyond geometric growth factors'' and we show that under mild regularity and growth assumptions on $\Phi$ such spaces admit a representation as weight matrix classes (as locally convex vector spaces) by involving the canonical matrix
	\[
	\mathcal{M}_{M,\Phi}:=\{M^{(c,\Phi)}: c>0\}, \quad M^{(c,\Phi)}_j:=c^{\Phi_j}M_j.
	\]
    Applying this main result to the PTT-classes, we are also able to see that when both $\sigma>1$ and $\tau>0$ are fixed then the corresponding space cannot be represented by a single weight sequence $M$ or by a weight function $\omega$; i.e., one requires the general weight matrix setting to describe these classes. In other words PTT-classes constitute genuine examples of ultradifferentiable classes defined by weight matrices.\vspace{6pt}
	
	On the other hand, in the very recent paper \cite{TT2022} it is shown that when only $\sigma>1$ is fixed and when one considers matrix-type classes with parameter $\tau>0$, i.e. PTT-limit classes, then these spaces can alternatively be defined in terms of a weight function (in particular of a so-called associated weight function). We give an independent proof of this result by applying purely weight matrix techniques; see Theorem \ref{varyingtaucharact}.\vspace{6pt}
	
	The paper is structured as follows: In Section \ref{weightcondsection} all necessary and relevant conditions on weight sequences, weight functions and weight matrices are given and the corresponding classes are defined. In Section \ref{ultrabeyondsection} we introduce ultradifferentiable spaces ``beyond geometric growth factors'' and prove in Section \ref{ultrabeyondvsweightmatrixsection} the main characterization results, i.e., Theorems \ref{PPTasweightmatrixthm}, \ref{Phinecprop1} and \ref{Phinecprop}, showing that, in particular, the PTT-classes can be represented as weight matrix spaces. In Section \ref{PTTsection} we apply this fact for fixed parameters $\tau>0$, $\sigma>1$, and study properties of the relevant matrix $\mathcal{M}^{\tau,\sigma}$ in order to transfer known results from the matrix setting to PTT-classes. In Section \ref{PTTnonfixedparametersect} this is done analogously for so-called limit classes when fixing $\sigma$ but letting $\tau\rightarrow 0$ resp. $\tau\rightarrow+\infty$. It is shown that such spaces can be represented as Braun-Meise-Taylor classes (see Theorem \ref{varyingtaucharact}) and  satisfy additional properties since in this weight structure both {\itshape mixed moderate growth conditions} of the particular type are valid.
	
	\subsection*{Acknowledgements}
	We wish to thank J. Vindas for pointing out additional results available for matrix classes; more precisely for bringing, what is now property (f) in Sections \ref{sec:pttresults} and \ref{sec:pttlimresults}, to our attention. In addition he suggested to consider \cite{debrouwkalmes2022}, whose implications are the content of Section \ref{sec:possibleresult}.\par
	And we thank N. Teofanov and F. Tomi\'c for forwarding their preprint of \cite{TT2022} and the subsequent helpful discussions.

	\section{Weights and conditions}\label{weightcondsection}
	\subsection{General notation}
	We write $\NN:=\{0,1,2,\dots\}$ and $\NN_{>0}:=\{1,2,3,\dots\}$. With $\mathcal{E}$ we denote the class of all smooth functions. We use the standard multi-index notation and $f^{(\alpha)}$, $\alpha\in\NN^d$, stands for the $\alpha$-th derivative of a given smooth function $f$ (defined in $\RR^d$).
	
	Occasionally, we write the symbol $[\cdot]$ if we mean either $\{\cdot\}$ (Roumieu-type) or $(\cdot)$ (Beurling-type) for spaces and growth conditions.
	
	\subsection{Weight sequences}\label{weightsequences}
	Given a sequence $M=(M_j)_j\in\RR_{>0}^{\NN}$ we also use $m=(m_j)_j$ defined by $m_j:=\frac{M_j}{j!}$ and $\mu_j:=\frac{M_j}{M_{j-1}}$, $j\ge 1$, and set $\mu_0:=1$. Analogously these conventions are used for all other appearing sequences, i.e., $N \leftrightarrow n \leftrightarrow \nu$ etc. $M$ is called {\itshape normalized} if $1=M_0\le M_1$ holds true.
	
	$M$ is called {\itshape log-convex}, denoted by \hypertarget{lc}{$(\text{lc})$}, if
	$$\forall\;j\in\NN_{>0}:\;M_j^2\le M_{j-1} M_{j+1},$$
	equivalently if $\mu$ is nondecreasing. If $M$ is log-convex and normalized, then both $M$ and $j\mapsto(M_j)^{1/j}$ are nondecreasing. In this case we get $M_j\ge 1$ for all $j\ge 0$ and
	\begin{equation*}\label{mucompare1}
		\forall\;j\in\NN_{>0}:\;\;\;(M_j)^{1/j}\le\mu_j.
	\end{equation*}
	Moreover we get $M_jM_k\le M_{j+k}$ for all $j,k\in\NN$; e.g. see \cite[Lemma 2.0.4, Lemma 2.0.6]{diploma}.
	
	If $m$ is log-convex, then $M$ is also log-convex and in this case we call $M$ {\itshape strongly log-convex} and write that $M$ is \hypertarget{slc}{$(\text{slc})$}.
	
	For any $M=(M_j)_j\in\RR_{>0}^{\NN}$ it is well-known that
	\begin{equation}\label{mucompare}
		\liminf_{j\rightarrow\infty}\mu_j\le\liminf_{j\rightarrow\infty}(M_j)^{1/j}\le\limsup_{j\rightarrow\infty}(M_j)^{1/j}\le\limsup_{j\rightarrow\infty}\mu_j.
	\end{equation}
	
	For convenience we introduce the following set of sequences:
	$$\hypertarget{LCset}{\mathcal{LC}}:=\{M\in\RR_{>0}^{\NN}:\;M\;\text{is normalized, log-convex},\;\lim_{j\rightarrow\infty}(M_j)^{1/j}=\infty\}.$$
	
	$M$ has {\itshape moderate growth}, denoted by \hypertarget{mg}{$(\text{mg})$}, if
	$$\exists\;C\ge 1\;\forall\;j,k\in\NN:\;M_{j+k}\le C^{j+k+1} M_j M_k.$$
	A weaker condition is {\itshape derivation closedness}, denoted by \hypertarget{dc}{$(\text{dc})$}, if
	$$\exists\;A\ge 1\;\forall\;j\in\NN:\;M_{j+1}\le A^{j+1} M_j\Leftrightarrow\mu_{j+1}\le A^{j+1}.$$
	
	$M$ is called {\itshape non-quasianalytic}, denoted by \hypertarget{nq}{$(\text{nq})$}, if
	$$\sum_{j\ge 1}\frac{1}{\mu_j}<+\infty.$$
	In the literature \hyperlink{mg}{$(\on{mg})$} is also known under {\itshape stability of ultradifferential operators} or $(M.2)$,  \hyperlink{dc}{$(\on{dc})$} under $(M.2)'$ and \hyperlink{nq}{$(\on{nq})$} under $(M.3)'$; see \cite{Komatsu73}. It is also known that for log-convex (normalized) weight sequences \hyperlink{nq}{$(\on{nq})$} is equivalent to
	$$\sum_{j\ge 1}\frac{1}{(M_j)^{1/j}}<+\infty,$$
	which holds by the so-called {\itshape Carleman-inequality}; see \cite[Prop. 4.1.7]{diploma} and the references therein.\vspace{6pt}
	
	$M$ has \hypertarget{beta1}{$(\beta_1)$} (named after \cite{petzsche}) if
	$$\exists\;Q\in\NN_{>0}:\;\liminf_{j\rightarrow\infty}\frac{\mu_{Qj}}{\mu_j}>Q,$$
	and \hypertarget{gamma1}{$(\gamma_1)$} if
	$$\sup_{j\in\NN_{>0}}\frac{\mu_j}{j}\sum_{k\ge j}\frac{1}{\mu_k}<\infty.$$
	In \cite[Proposition 1.1]{petzsche} it has been shown that for $M\in\hyperlink{LCset}{\mathcal{LC}}$ both conditions are equivalent and in the literature \hyperlink{gamma1}{$(\gamma_1)$} is also called ``strong nonquasianalyticity condition''. In \cite{Komatsu73} it is denoted by $(M.3)$ (in fact, there $\frac{\mu_j}{j}$ is replaced by $\frac{\mu_j}{j-1}$ for $j\ge 2$ but which is equivalent to having \hyperlink{gamma1}{$(\gamma_1)$}).
	
	A weaker condition on $M$ is \hypertarget{beta3}{$(\beta_3)$} (named after \cite{dissertation}, see also \cite{BonetMeiseMelikhov07}) which reads as follows:
	$$\exists\;Q\in\NN_{>0}:\;\liminf_{j\rightarrow\infty}\frac{\mu_{Qj}}{\mu_j}>1.$$
	
	For two weight sequences $M=(M_j)_j$ and $N=(N_j)_j$ we write $M\hypertarget{mpreceq}{\preceq}N$ if
	$$\sup_{j\in\NN_{>0}}\left(\frac{M_j}{N_j}\right)^{1/j}<\infty,$$
	and call them equivalent, denoted by $M\hypertarget{approx}{\approx}N$, if
	$$M\hyperlink{mpreceq}{\preceq}N\;\text{and}\;N\hyperlink{mpreceq}{\preceq}M.$$
	In the relations above one can replace $M$ and $N$ simultaneously by $m$ and $n$ because $M\hyperlink{mpreceq}{\preceq}N\Leftrightarrow m\hyperlink{mpreceq}{\preceq}n$. Let us also write $M\le N$ if $M_j\le N_j$ for all $j\in\NN$. Finally, we write $M\hypertarget{triangle}{\vartriangleleft}N$, if
	$$\lim_{j\rightarrow\infty}\left(\frac{M_j}{N_j}\right)^{1/j}=0.$$
	For any $s\ge 0$ we set $G^s:=(j!^s)_{j\in\NN}$, so for $s>0$ this denotes the classical {\itshape Gevrey sequence} of index/order $s$.

	\subsection{Weight functions}
	According to \cite[Sect. 2.1]{optimalRoumieu} and \cite[Sect. 2.2]{optimalBeurling} a function $\omega:[0,+\infty)\rightarrow[0,+\infty)$  is called a {\itshape pre-weight function,} if it is continuous, non-decreasing, $\omega(0)=0$ and such that
	\begin{itemize}
		\item[$(*)$] $\log(t)=o(\omega(t))$, $t\rightarrow+\infty$,
		
		\item[$(*)$] $t\mapsto\varphi_{\omega}(t):=\omega(e^t)$ is convex.
	\end{itemize}
	Consequently, for each pre-weight function we have $\lim_{t\rightarrow+\infty}\omega(t)=+\infty$. $\omega$ is called a {\itshape weight function} if $\omega$ satisfies in addition
	\begin{itemize}
		\item[$(*)$] $\omega(2t)=O(\omega(t))$, $t\rightarrow+\infty$.
	\end{itemize}
	
	If $\omega(t)=0$ for all $t\in[0,1]$, then we call $\omega$ a {\itshape normalized (pre-)weight function}.

	Let $\sigma,\tau$ be pre-weight functions, we write $\sigma\hypertarget{ompreceq}{\preceq}\tau$ if $\tau(t)=O(\sigma(t))\;\text{as}\;t\rightarrow+\infty$. We call them equivalent, denoted by $\sigma\hypertarget{sim}{\sim}\tau$, if
	$\sigma\hyperlink{ompreceq}{\preceq}\tau$ and $\tau\hyperlink{ompreceq}{\preceq}\sigma$.
	
	\subsection{Associated weight function}
	Let $M\in\RR_{>0}^{\NN}$ (with $M_0=1$), then the {\itshape associated function} $\omega_M: \RR_{\ge 0}\rightarrow\RR\cup\{+\infty\}$ is defined by
	\begin{equation*}\label{assofunc}
		\omega_M(t):=\sup_{j\in\NN}\log\left(\frac{t^j}{M_j}\right)\;\;\;\text{for}\;t\in \RR_{>0},\hspace{30pt}\omega_M(0):=0.
	\end{equation*}
	For an abstract introduction of the associated function we refer to \cite[Chapitre I]{mandelbrojtbook}; see also \cite[Definition 3.1]{Komatsu73}.
	
	If $\liminf_{j\rightarrow+\infty}(M_j)^{1/j}>0$, then $\omega_M(t)=0$ for sufficiently small $t$, since $\log\left(\frac{t^j}{M_j}\right)<0\Leftrightarrow t<(M_j)^{1/j}$ holds for all $j\in\NN_{>0}$. (In particular, if $M_j\ge 1$ for all $j\in\NN$, then $\omega_M$ is vanishing on $[0,1]$.) Moreover, under this assumption $t\mapsto\omega_M(t)$ is a continuous nondecreasing function, which is convex in the variable $\log(t)$ and tends faster to infinity than any $\log(t^j)$, $j\ge 1$, as $t\rightarrow+\infty$. $\lim_{j\rightarrow+\infty}(M_j)^{1/j}=+\infty$ implies that $\omega_M(t)<+\infty$ for each finite $t$ which shall be considered as a basic assumption for defining $\omega_M$.\vspace{6pt}
	
	Summarizing, if $M\in\hyperlink{LCset}{\mathcal{LC}}$, then $\omega_M$ is a normalized pre-weight function (e.g. see \cite[Lemma 3.1]{sectorialextensions1}), however in general $\omega_M(2t)=O(\omega_M(t))$ is not clear; see the recent characterization \cite[Thm. 3.1]{subaddlike}.
	
	Finally, if $M\in\hyperlink{LCset}{\mathcal{LC}}$ (or even if $M$ is log-convex with $M_0=1$ and $\lim_{j\rightarrow+\infty}(M_j)^{1/j}=+\infty$), then by \cite[Chapitre I, 1.4, 1.8]{mandelbrojtbook} and also \cite[Prop. 3.2]{Komatsu73} we get
	\begin{equation}\label{Prop32Komatsu}
		M_j=\sup_{t\ge 0}\frac{t^j}{\exp(\omega_{M}(t))},\;\;\;j\in\NN.
	\end{equation}
	
	\subsection{Weight matrices}\label{weightmatrixsection}
	For the following definitions and conditions see \cite[Sect. 4]{compositionpaper} and \cite[Sect. 7]{dissertation}.
	
	Let $\mathcal{I}=\RR_{>0}$ denote the index set (equipped with the natural order). A {\itshape weight matrix} $\mathcal{M}$ associated with $\mathcal{I}$ is a (one parameter) family of weight sequences $\mathcal{M}:=\{M^{(\alpha)}\in\RR_{>0}^{\NN}: \alpha\in\mathcal{I}\}$, such that
	$$\forall\;\alpha\le\beta:\;\;\; M^{(\alpha)}\le M^{(\beta)}.$$
	We call a weight matrix $\mathcal{M}$ {\itshape standard log-convex,} denoted by \hypertarget{Msc}{$(\mathcal{M}_{\on{sc}})$}, if
	$$\forall\;\alpha\in\mathcal{I}:\;M^{(\alpha)}\in\hyperlink{LCset}{\mathcal{LC}}.$$
	Moreover, we put $m^{(\alpha)}_j:=\frac{M^{(\alpha)}_j}{j!}$ for $j\in\NN$, and $\mu^{(\alpha)}_j:=\frac{M^{(\alpha)}_j}{M^{(\alpha)}_{j-1}}$ for $j\in\NN_{>0}$, $\mu^{(\alpha)}_0:=1$.
	
	A matrix is called {\itshape constant} if $M^{(\alpha)}\hyperlink{approx}{\approx}M^{(\beta)}$ for all $\alpha,\beta\in\mathcal{I}$.\vspace{6pt}
	
	Let $\mathcal{M}=\{M^{(\alpha)}: \alpha\in\mathcal{I}\}$ and $\mathcal{N}=\{N^{(\alpha)}: \alpha\in\mathcal{I}\}$ be given. We write $\mathcal{M}\hypertarget{Mroumpreceq}{\{\preceq\}}\mathcal{N}$ if
	$$\forall\;\alpha\in\mathcal{I}\;\exists\;\beta\in\mathcal{I}:\;\;\;M^{(\alpha)}\hyperlink{preceq}{\preceq}N^{(\beta)},$$
	and call $\mathcal{M}$ and $\mathcal{N}$
	to be $R$-equivalent, or $\mathcal{M} \{ \approx \} \mathcal{N}$ for short, if
	$\mathcal{M}\hyperlink{Mroumpreceq}{\{\preceq\}}\mathcal{N}$ and $\mathcal{N}\hyperlink{Mroumpreceq}{\{\preceq\}}\mathcal{M}$. Analogously, we write $\mathcal{M}\hypertarget{Mbeurpreceq}{(\preceq)}\mathcal{N}$ if
	$$\forall\;\alpha\in\mathcal{I}\;\exists\;\beta\in\mathcal{I}:\;\;\;M^{(\beta)}\hyperlink{preceq}{\preceq}N^{(\alpha)},$$
	and call $\mathcal{M}$ and $\mathcal{N}$ to be $B$-equivalent, or $\mathcal{M} ( \approx ) \mathcal{N}$ for short, if
	$\mathcal{M}\hyperlink{Mbeurpreceq}{(\preceq)}\mathcal{N}$ and $\mathcal{N}\hyperlink{Mbeurpreceq}{(\preceq)}\mathcal{M}$.
	
	If $\mathcal{M}$ and $\mathcal{N}$ are both $R$- and $B$-equivalent, then we say for simplicity that they are equivalent.\vspace{6pt}
	
	We recall several growth and regularity assumptions on a given weight matrix:
	\begin{align*}
		&\hypertarget{holom}{(\mathcal{M}_{\mathcal{H}})} & & \forall\;\alpha\in\mathcal{I}:\;\;\;\liminf_{j\rightarrow\infty}(m^{(\alpha)}_j)^{1/j}>0,\\
		&\hypertarget{R-Comega}{(\mathcal{M}_{\{\text{C}^{\omega}\}})} &&\exists\;\alpha\in\mathcal{I}:\;\;\;\liminf_{j\rightarrow\infty}(m^{(\alpha)}_j)^{1/j}>0,\\
		&\hypertarget{B-Comega}{(\mathcal{M}_{(\text{C}^{\omega})})}& & \forall\;\alpha\in\mathcal{I}:\;\;\;\lim_{j\rightarrow\infty}(m^{(\alpha)}_j)^{1/j}=+\infty,\\
		&\hypertarget{R-rai}{(\mathcal{M}_{\{\text{rai}\}})}&& \forall\;\alpha\in\mathcal{I}\;\exists\;C>0\;\exists\;\beta\in\mathcal{I}\;\forall\;1\le j\le k:\;\;\;(m^{(\alpha)}_j)^{1/j}\le C(m^{(\beta)}_k)^{1/k},\\
		&\hypertarget{B-rai}{(\mathcal{M}_{(\text{rai})})} && \forall\;\alpha\in\mathcal{I}\;\exists\;C>0\;\exists\;\beta\in\mathcal{I}\;\forall\;1\le j\le k:\;\;\;(m^{(\beta)}_j)^{1/j}\le C(m^{(\alpha)}_k)^{1/k},\\
		&\hypertarget{R-mg}{(\mathcal{M}_{\{\text{mg}\}})} && \forall\;\alpha\in\mathcal{I}\;\exists\;C>0\;\exists\;\beta\in\mathcal{I}\;\forall\;j,k\in\NN:\;\;\;M^{(\alpha)}_{j+k}\le C^{j+k+1} M^{(\beta)}_j M^{(\beta)}_k,\\
		&\hypertarget{B-mg}{(\mathcal{M}_{(\text{mg})})}&& \forall\;\alpha\in\mathcal{I}\;\exists\;C>0\;\exists\;\beta\in\mathcal{I}\;\forall\;j,k\in\NN:\;\;\;M^{(\beta)}_{j+k}\le C^{j+k+1} M^{(\alpha)}_j M^{(\alpha)}_k,\\
		&\hypertarget{R-dc}{(\mathcal{M}_{\{\text{dc}\}})} && \forall\;\alpha\in\mathcal{I}\;\exists\;C>0\;\exists\;\beta\in\mathcal{I}\;\forall\;j\in\NN:\;\;\;M^{(\alpha)}_{j+1}\le C^{j+1} M^{(\beta)}_j,\\
		&\hypertarget{B-dc}{(\mathcal{M}_{(\text{dc})})}&& \forall\;\alpha\in\mathcal{I}\;\exists\;C>0\;\exists\;\beta\in\mathcal{I}\;\forall\;j\in\NN:\;\;\;M^{(\beta)}_{j+1}\le C^{j+1} M^{(\alpha)}_j,\\
		&\hypertarget{R-BR}{(\mathcal{M}_{\{\text{BR}\}})} & & \forall\;\alpha\in\mathcal{I}\;\exists\;\beta\in\mathcal{I}:\;\;\; M^{(\alpha)} \hyperlink{triangle}{\vartriangleleft}  M^{(\beta)} ,\\
		&\hypertarget{B-BR}{(\mathcal{M}_{(\text{BR})})} && \forall\;\alpha\in\mathcal{I}\;\exists\;\beta\in\mathcal{I}:\;\;\;  M^{(\beta)} \hyperlink{triangle}{\vartriangleleft}  M^{(\alpha)} ,\\
		&\hypertarget{R-FdB}{(\mathcal{M}_{\{\text{FdB}\}})} & & \forall\;\alpha\in\mathcal{I}\;\exists\;\beta\in\mathcal{I}:\;\;\; (m^{(\alpha)})^{\circ}\hyperlink{mpreceq}{\preceq} m^{(\beta)},\\
		&\hypertarget{B-FdB}{(\mathcal{M}_{(\text{FdB})})} && \forall\;\alpha\in\mathcal{I}\;\exists\;\beta\in\mathcal{I}:\;\;\; (m^{(\beta)})^{\circ}\hyperlink{mpreceq}{\preceq} m^{(\alpha)},
	\end{align*}
	with $(m^{(\alpha)})^{\circ}:=((m^{(\alpha)}_j)^{\circ})_j$ being the sequence defined by
	$$(m^{(\alpha)}_k)^{\circ}:=\max\left\{m^{(\alpha)}_{\ell}\cdot m^{(\alpha)}_{j_1}\cdots m^{(\alpha)}_{j_{\ell}}: j_i\in\NN_{>0}, \sum_{i=1}^{\ell}j_i=k\right\},\;\;\;\;\;(m^{(\alpha)}_0)^{\circ}:=1.$$

	$R$-equivalence between matrices preserves all Roumieu-type conditions listed above and $B$-equivalence all Beurling-type conditions.
	
	Finally, let us recall
	\begin{align*}
		&\hypertarget{R-L}{(\mathcal{M}_{\{\text{L}\}})} && \forall\;C>0\;\forall\;\alpha\in\mathcal{I}\;\exists\;D>0\;\exists\;\beta\in\mathcal{I}\;\forall\;j\in\NN: C^j M^{(\alpha)}_j\le D M^{(\beta)}_j,\\
		&\hypertarget{B-L}{(\mathcal{M}_{(\text{L})})}&& \forall\;C>0\;\forall\;\alpha\in\mathcal{I}\;\exists\;D>0\;\exists\;\beta\in\mathcal{I}\;\forall\;j\in\NN: C^j M^{(\beta)}_j\le D M^{(\alpha)}_j.
	\end{align*}
	
	A matrix is called {\itshape non-quasianalytic} if any sequence $M^{(\alpha)}$ is non-quasianalytic; see \cite[Sect. 4]{testfunctioncharacterization}. When dealing with Roumieu type classes then it suffices to assume that there exists $\alpha_0\in\mathcal{I}$ such that $M^{(\alpha_0)}$ is non-quasianalytic since smaller indices can be skipped; see also the discussion in \cite[Sect. 5.1]{borelmappingquasianalytic}.
	
	\subsection{Ultradifferentiable classes}\label{ultradiffclassessection}
	Let $U\subseteq\RR^d$ be non-empty open and for $K\subseteq\RR^d$ compact we write $K\subset\subset U$ if $\overline{K}\subseteq U$, i.e., $K$ is in $U$ relatively compact. We introduce now the following spaces of ultradifferentiable function classes.
	First, for weight sequences we define the (local) classes of {\itshape Roumieu-type} by
	$$\mathcal{E}_{\{M\}}(U):=\{f\in\mathcal{E}(U):\;\forall\;K\subset\subset U\;\exists\;h>0:\;\|f\|_{M,K,h}<+\infty\},$$
	and the classes of {\itshape Beurling-type} by
	$$\mathcal{E}_{(M)}(U):=\{f\in\mathcal{E}(U):\;\forall\;K\subset\subset U\;\forall\;h>0:\;\|f\|_{M,K,h}<+\infty\},$$
	where we denote
	\begin{equation*}\label{semi-norm-2}
		\|f\|_{M,K,h}:=\sup_{\alpha\in\NN^d,x\in K}\frac{|f^{(\alpha)}(x)|}{h^{|\alpha|} M_{|\alpha|}}.
	\end{equation*}
	
	For a sufficiently regular compact set $K$ (e.g. with smooth boundary and such that $\overline{K^\circ}=K$)
	$$\mathcal{E}_{M,h}(K):=\{f\in\mathcal{E}(K): \|f\|_{M,K,h}<+\infty\}$$
	is a Banach space and so we have the following topological vector spaces
	$$\mathcal{E}_{\{M\}}(K):=\underset{h>0}{\varinjlim}\;\mathcal{E}_{M,h}(K),$$
	and
	\begin{equation*}
		\mathcal{E}_{\{M\}}(U)=\underset{K\subset\subset U}{\varprojlim}\;\underset{h>0}{\varinjlim}\;\mathcal{E}_{M,h}(K)=\underset{K\subset\subset U}{\varprojlim}\;\mathcal{E}_{\{M\}}(K).
	\end{equation*}
	Similarly, we get
	$$\mathcal{E}_{(M)}(K):=\underset{h>0}{\varprojlim}\;\mathcal{E}_{M,h}(K),$$
	and
	\begin{equation*}
		\mathcal{E}_{(M)}(U)=\underset{K\subset\subset U}{\varprojlim}\;\underset{h>0}{\varprojlim}\;\mathcal{E}_{M,h}(K)=\underset{K\subset\subset U}{\varprojlim}\;\mathcal{E}_{(M)}(K).
	\end{equation*}
	
	For a weight function, we define the corresponding Roumieu-type classes
	$$\mathcal{E}_{\{\omega\}}(U):=\{f\in\mathcal{E}(U):\;\forall\;K\subset\subset U\;\exists\;h>0:\;\|f\|_{\omega,K,h}<+\infty\},$$
	and the classes of Beurling-type by
	$$\mathcal{E}_{(\omega)}(U):=\{f\in\mathcal{E}(U):\;\forall\;K\subset\subset U\;\forall\;h>0:\;\|f\|_{\omega,K,h}<+\infty\},$$
	where we denote
	\begin{equation*}\label{semi-norm-2}
		\|f\|_{\omega,K,h}:=\sup_{\alpha\in\NN^d,x\in K}\frac{|f^{(\alpha)}(x)|}{\exp(\frac{1}{h} \varphi_\omega^*(h|\alpha|))}.
	\end{equation*}
	The spaces are topologized in complete analogy to the weight sequence case. First we define for a sufficiently regular compact set $K$ the Banach space
	$$\mathcal{E}_{\omega,h}(K):=\{f\in\mathcal{E}(K): \|f\|_{\omega,K,h}<+\infty\},$$
	and set
	\[
	\mathcal{E}_{\{\omega\}}(K):=\underset{h>0}{\varinjlim}\;\mathcal{E}_{\omega,h}(K), \quad \mathcal{E}_{(\omega)}(K):=\underset{h>0}{\varprojlim}\;\mathcal{E}_{\omega,h}(K),
	\]
	finally we endow $\mathcal{E}_{[\omega]}(U)$ with the following locally convex topologies
	\[
	\mathcal{E}_{\{\omega\}}(U)=\underset{K\subset\subset U}{\varprojlim}\;\mathcal{E}_{\{\omega\}}(K), \quad \mathcal{E}_{(\omega)}(U)=\underset{K\subset\subset U}{\varprojlim}\;\mathcal{E}_{(\omega)}(K).
	\]

	Next, we consider classes defined by weight matrices of Roumieu-type $\mathcal{E}_{\{\mathcal{M}\}}$ and of Beurling-type $\mathcal{E}_{(\mathcal{M})}$ as follows; see also \cite[4.2]{compositionpaper}. For a weight matrix $\mathcal{M}=\{M^{(x)}:x\in \mathcal{I} \}$ and a sufficiently regular $K\subset\subset U$ we put
	\begin{equation}\label{generalroumieu}
		\mathcal{E}_{\{\mathcal{M}\}}(K):=\bigcup_{x\in\mathcal{I}}\mathcal{E}_{\{M^{(x)}\}}(K),\hspace{20pt}\mathcal{E}_{\{\mathcal{M}\}}(U):=\bigcap_{K\subset\subset U}\bigcup_{x\in\mathcal{I}}\mathcal{E}_{\{M^{(x)}\}}(K),
	\end{equation}
	and
	\begin{equation*}\label{generalbeurling}
		\mathcal{E}_{(\mathcal{M})}(K):=\bigcap_{x\in\mathcal{I}}\mathcal{E}_{(M^{(x)})}(K),\hspace{20pt}\mathcal{E}_{(\mathcal{M})}(U):=\bigcap_{x\in\mathcal{I}}\mathcal{E}_{(M^{(x)})}(U).
	\end{equation*}
	For such $K$ one has the representation
	$$\mathcal{E}_{\{\mathcal{M}\}}(K)=\underset{x\in\mathcal{I}}{\varinjlim}\;\underset{h>0}{\varinjlim}\;\mathcal{E}_{M^{(x)},h}(K)$$
	and so for $U\subseteq\RR^d$ non-empty open
	\begin{equation*}\label{generalroumieu1}
		\mathcal{E}_{\{\mathcal{M}\}}(U)=\underset{K\subset\subset U}{\varprojlim}\;\underset{x\in\mathcal{I}}{\varinjlim}\;\underset{h>0}{\varinjlim}\;\mathcal{E}_{M^{(x)},h}(K).
	\end{equation*}
	Similarly, we get for the Beurling case
	\begin{equation*}
		\mathcal{E}_{(\mathcal{M})}(U)=\underset{K\subset\subset U}{\varprojlim}\;\underset{x\in\mathcal{I}}{\varprojlim}\;\underset{h>0}{\varprojlim}\;\mathcal{E}_{M^{(x)},h}(K).
	\end{equation*}
	
	\subsection{Pilipovi\'{c}-Teofanov-Tomi\'{c} classes}
	\label{sec:ptt1}
	Let $\tau,h>0$, $\sigma\ge 1$, then one considers the weight sequence
	\[
	M^{\tau,\sigma}_j:= j^{\tau j^\sigma}.
	\]
	Let $K\subset \subset \RR^d$ be a sufficiently regular compact set. By $\mathcal{E}_{\tau,\sigma,h}(K)$ we shall denote the Banach space of functions $\phi\in\mathcal{E}(K)$ such that
	$$||\phi||_{\mathcal{E}_{\tau,\sigma,h}(K)} =\sup_{\alpha\in\NN^d}\sup_{x\in K} \frac{|\phi^{(\alpha)}(x)|}{h^{|\alpha|^\sigma} M_{|\alpha|}^{\tau, \sigma} }<+\infty.$$
	Note that the case $\sigma=1$ gives, by Stirling's formula, that $M^{\tau,1}\hyperlink{approx}{\approx}G^{\tau}$, i.e., $M^{\tau,1}$ is equivalent to the classical Gevrey sequence with parameter/index $\tau$.
	
	If $0<h_1\leq h_2$, $0 < \tau_1 \leq \tau_2$, $1\leq\sigma_1\leq \sigma_2$, then
	$\mathcal{E}_{\tau_1,\sigma_1,h_1}(K) \hookrightarrow \mathcal{E}_{\tau_2,\sigma_2,h_2}(K) $.
	
	Let $U$ be an open set of $\RR^d$. We define the spaces:
	$$ \mathcal{E}_{\{\tau,\sigma\}} (U) = \varprojlim_{K\subset \subset U } \varinjlim_{h\to\infty} \mathcal{E}_{\tau,\sigma,h}(K)  \qquad \qquad \mathcal{E}_{ (\tau,\sigma )} (U) = \varprojlim_{K\subset \subset U } \varprojlim_{h\to 0} \mathcal{E}_{\tau,\sigma,h}(K) $$
	$$ \mathcal{E}_{\{\infty,\sigma\}} (U) =  \varinjlim_{\tau\to\infty} \mathcal{E}_{\{\tau,\sigma\}}(U)  \qquad \qquad   \mathcal{E}_{ (\infty,\sigma )} (U) = \varinjlim_{\tau\to\infty} \mathcal{E}_{(\tau,\sigma)}(U) $$
	$$ \mathcal{E}_{\{0,\sigma\}} (U) =  \varprojlim_{\tau\to 0} \mathcal{E}_{\{\tau,\sigma\}}(U)  \qquad \qquad   \mathcal{E}_{ (0,\sigma )} (U) = \varprojlim_{\tau\to 0} \mathcal{E}_{(\tau,\sigma)}(U) $$
	We use the abbreviated notation $[\tau,\sigma]$ for $\{\tau,\sigma\}$  or $(\tau,\sigma)$.
	
	\begin{proposition} \cite[Prop. 2.1]{PTT2015} \cite[Prop. 2.1]{PTT2016}  Let $\sigma\geq 1$ and $\tau>0$. Then for every $\sigma_2>\sigma_1\geq 1$, we have that
		$$ \mathcal{E}_{ [\infty,\sigma_1]} (U)  \hookrightarrow \mathcal{E}_{ [0,\sigma_2 ]} (U). $$
		Moreover, if $0<\tau_1<\tau_2$ then
		$$ \mathcal{E}_{\{\tau_1,\sigma\}} (U)   \hookrightarrow  \mathcal{E}_{ (\tau_2,\sigma )} (U)  \hookrightarrow  \mathcal{E}_{\{\tau_2,\sigma\}} (U), $$
		and
		$$\mathcal{E}_{\{\infty,\sigma\}} (U) = \mathcal{E}_{ (\infty,\sigma )} (U) \qquad \qquad  \mathcal{E}_{\{0,\sigma\}} (U) = \mathcal{E}_{ (0,\sigma )} (U). $$
		Consequently, given $\tau_0>0$ we see that
		$$\mathcal{E}_{[\tau_0,\sigma_1]} (U)  \hookrightarrow \bigcap_{\tau>\tau_0} \mathcal{E}_{[\tau,\sigma_1]} (U)  \hookrightarrow  \mathcal{E}_{[\tau_0,\sigma_2]} (U). $$
		In particular, if $\sigma > 1$ then
		$$\mathcal{E}_{[\infty,1]} (U)  \hookrightarrow \mathcal{E}_{[\tau,\sigma]} (U).$$
	\end{proposition}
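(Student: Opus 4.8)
The plan is to reduce all the claimed embeddings to a single elementary comparison of the growth factors that occur in the seminorms $\|\cdot\|_{\mathcal E_{\tau,\sigma,h}(K)}$, namely $w_{\tau,\sigma,h}(j):=h^{j^{\sigma}}M^{\tau,\sigma}_j=h^{j^{\sigma}}j^{\tau j^{\sigma}}$ for $j\in\NN$. As recalled in Section~\ref{sec:ptt1}, a pointwise bound $w_{\tau_1,\sigma_1,h_1}(j)\le C\,w_{\tau_2,\sigma_2,h_2}(j)$ for all $j$, with $C\ge 1$ independent of $j$, yields a continuous inclusion $\mathcal E_{\tau_1,\sigma_1,h_1}(K)\hookrightarrow\mathcal E_{\tau_2,\sigma_2,h_2}(K)$; passing to the (co)limits over $K$ and $h$ in the definitions of $\mathcal E_{[\tau,\sigma]}(U)$, $\mathcal E_{[\infty,\sigma]}(U)$ and $\mathcal E_{[0,\sigma]}(U)$ then turns such estimates into continuous inclusions of the corresponding global classes. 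The key observation is that, taking logarithms, $\log w_{\tau,\sigma,h}(j)=j^{\sigma}\log h+\tau j^{\sigma}\log j=(\tau+o(1))\,j^{\sigma}\log j$ as $j\to\infty$: the exponent $\sigma$ governs the leading order of growth and, once $\sigma$ is fixed, $\tau$ governs the leading coefficient.

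From this I would extract three basic facts. (i) If $\sigma_1<\sigma_2$, then for \emph{arbitrary} $h_1,h_2,\tau_1,\tau_2>0$ one has $\log w_{\tau_1,\sigma_1,h_1}(j)=O(j^{\sigma_1}\log j)=o(j^{\sigma_2}\log j)$ while $\log w_{\tau_2,\sigma_2,h_2}(j)\sim\tau_2 j^{\sigma_2}\log j\to+\infty$, so $w_{\tau_1,\sigma_1,h_1}\le C\,w_{\tau_2,\sigma_2,h_2}$ on $\NN$ (finitely many small indices being absorbed into $C$). (ii) If $\sigma_1=\sigma_2=\sigma$ and $\tau_1<\tau_2$, then for \emph{arbitrary} $h_1,h_2>0$ the quotient $w_{\tau_1,\sigma,h_1}(j)/w_{\tau_2,\sigma,h_2}(j)=\exp\!\big(j^{\sigma}[\log(h_1/h_2)-(\tau_2-\tau_1)\log j]\big)$ tends to $0$, hence again $w_{\tau_1,\sigma,h_1}\le C\,w_{\tau_2,\sigma,h_2}$ on $\NN$. (iii) If $\tau_1\le\tau_2$ and $\sigma,h$ are fixed, then $w_{\tau_1,\sigma,h}\le w_{\tau_2,\sigma,h}$ trivially, which is the monotonicity $\mathcal E_{\tau_1,\sigma,h}(K)\hookrightarrow\mathcal E_{\tau_2,\sigma,h}(K)$ already noted in Section~\ref{sec:ptt1}.

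The assertions then follow mechanically. For $\tau_1<\tau_2$ the chain $\mathcal E_{\{\tau_1,\sigma\}}(U)\hookrightarrow\mathcal E_{(\tau_2,\sigma)}(U)\hookrightarrow\mathcal E_{\{\tau_2,\sigma\}}(U)$ holds because a function in $\mathcal E_{\{\tau_1,\sigma\}}$ obeys a bound by $w_{\tau_1,\sigma,h_1}$ for \emph{some} $h_1$, which by (ii) upgrades to a bound by $w_{\tau_2,\sigma,h_2}$ for \emph{every} prescribed $h_2>0$, i.e. to membership in $\mathcal E_{(\tau_2,\sigma)}$, while the second inclusion is the trivial Beurling\,$\hookrightarrow$\,Roumieu one. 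Feeding this chain into $\varinjlim_{\tau\to\infty}$ shows that $(\mathcal E_{\{\tau,\sigma\}})_\tau$ and $(\mathcal E_{(\tau,\sigma)})_\tau$ are mutually cofinal via continuous inclusions, giving $\mathcal E_{\{\infty,\sigma\}}(U)=\mathcal E_{(\infty,\sigma)}(U)$, and feeding it into $\varprojlim_{\tau\to 0}$ gives $\mathcal E_{\{0,\sigma\}}(U)=\mathcal E_{(0,\sigma)}(U)$ dually. For $\sigma_1<\sigma_2$ the embedding $\mathcal E_{[\infty,\sigma_1]}(U)\hookrightarrow\mathcal E_{[0,\sigma_2]}(U)$ follows from (i): a function in $\mathcal E_{[\infty,\sigma_1]}$ lies in some $\mathcal E_{[\tau,\sigma_1]}$, which embeds continuously into $\mathcal E_{[\tau',\sigma_2]}$ for \emph{every} $\tau'>0$, hence into $\mathcal E_{[0,\sigma_2]}=\bigcap_{\tau'>0}\mathcal E_{[\tau',\sigma_2]}$; specializing to $\sigma_1=1$ (and using $\mathcal E_{[0,\sigma]}(U)\hookrightarrow\mathcal E_{[\tau,\sigma]}(U)$) yields the final ``in particular'' statement. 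Finally, the two-step chain $\mathcal E_{[\tau_0,\sigma_1]}(U)\hookrightarrow\bigcap_{\tau>\tau_0}\mathcal E_{[\tau,\sigma_1]}(U)\hookrightarrow\mathcal E_{[\tau_0,\sigma_2]}(U)$ uses (iii) for the first step and (i) with $\tau'=\tau_0$ for the second.

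The analytic heart of all this is the one-line asymptotics $\log w_{\tau,\sigma,h}(j)\sim\tau j^{\sigma}\log j$, so I anticipate no real difficulty on that side. The step that will require genuine care---and where most of the write-up would go---is the \emph{topological} bookkeeping: verifying that the mutual cofinality provided by (ii) really identifies the inductive limit $\varinjlim_{\tau}\mathcal E_{\{\tau,\sigma\}}(U)$ with $\varinjlim_{\tau}\mathcal E_{(\tau,\sigma)}(U)$, and dually the projective limits defining the subscript-$0$ classes, \emph{as locally convex spaces} rather than merely as sets, and that the seminorm estimates above descend correctly through the nested $\varprojlim_K\varinjlim_h$ and $\varprojlim_K\varprojlim_h$ constructions. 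Once the interleaving inclusions are known to be continuous, identifying the limits is a routine cofinality argument.
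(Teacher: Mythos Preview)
The paper does not supply its own proof of this proposition; it is quoted verbatim from \cite{PTT2015} and \cite{PTT2016}. Your argument is correct and is the natural one: the asymptotics $\log w_{\tau,\sigma,h}(j)\sim\tau j^{\sigma}\log j$ immediately give the three comparison facts (i)--(iii), and the embeddings then follow by cofinality. Your caution about the topological side is appropriate but should not cause trouble: every Banach-level inclusion $\mathcal E_{\tau_1,\sigma_1,h_1}(K)\hookrightarrow\mathcal E_{\tau_2,\sigma_2,h_2}(K)$ you produce carries an explicit norm bound, so continuity passes to the (co)limits directly, and the interleaving $\mathcal E_{(\tau,\sigma)}\hookrightarrow\mathcal E_{\{\tau,\sigma\}}\hookrightarrow\mathcal E_{(\tau',\sigma)}$ for $\tau<\tau'$ is exactly the cofinality needed to identify the two inductive limits at $\infty$ and the two projective limits at $0$.
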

	
	\section{Ultradifferentiable classes beyond geometric growth factors}\label{ultrabeyondsection}
	
	The main objective is to prove that the PTT-classes can be represented as classes defined by (suitable) weight matrices. Indeed, we can obtain a more general result by letting the exponents of the defining estimates be $\Phi_j$ instead of  ${j}$ or $j^\sigma$ where $\Phi = (\Phi_j)_j$ is arbitrary and only satisfying some mild regularity property.
	
	\begin{definition}
		A sequence $\Phi \in \RR_{\ge 0}^{\NN}$ is called \emph{exponent sequence} if it satisfies
		\begin{equation}\label{Philiminf}
			\liminf_{j\rightarrow\infty}\frac{\Phi_j}{j}>0.
		\end{equation}
	\end{definition}
	
	In particular every exponent sequence tends to infinity.

	Let $U\subseteq\RR^d$ be non-empty open, $M\in\RR_{>0}^{\NN}$ and $\Phi \in \RR_{\ge 0}^{\NN}$. We introduce now $\Phi$-ultradifferentiable function classes (defined in terms of a single weight sequence $M$): The (local) class of Roumieu-type is given by
	$$\mathcal{E}_{\{M,\Phi\}}(U):=\{f\in\mathcal{E}(U):\;\forall\;K\subset\subset U\;\exists\;h>0:\;\|f\|_{M,\Phi,K,h}<+\infty\},$$
	and the class of Beurling-type by
	$$\mathcal{E}_{(M,\Phi)}(U):=\{f\in\mathcal{E}(U):\;\forall\;K\subset\subset U\;\forall\;h>0:\;\|f\|_{M,\Phi,K,h}<+\infty\},$$
	where we denote
	\begin{equation}\label{Phiseminorm}
		\|f\|_{M,\Phi,K,h}:=\sup_{\alpha\in\NN^d,x\in K}\frac{|f^{(\alpha)}(x)|}{h^{\Phi_{|\alpha|}} M_{|\alpha|}}.
	\end{equation}
	
	For a sufficiently regular compact set $K$
	$$\mathcal{E}_{M,\Phi,h}(K):=\{f\in\mathcal{E}(K): \|f\|_{M,\Phi,K,h}<+\infty\}$$
	is a Banach space and so we have the following topological vector space representations
	$$\mathcal{E}_{\{M,\Phi\}}(K):=\underset{h>0}{\varinjlim}\;\mathcal{E}_{M,\Phi,h}(K), \qquad \text{and} \qquad
	\mathcal{E}_{\{M,\Phi\}}(U)=\underset{K\subset\subset U}{\varprojlim}\;\underset{h>0}{\varinjlim}\;\mathcal{E}_{M,\Phi,h}(K)=\underset{K\subset\subset U}{\varprojlim}\;\mathcal{E}_{\{M,\Phi\}}(K).$$
	Similarly, we get
	$$\mathcal{E}_{(M,\Phi)}(K):=\underset{h>0}{\varprojlim}\;\mathcal{E}_{M,\Phi,h}(K),
	\qquad \text{and} \qquad
	\mathcal{E}_{(M,\Phi)}(U)=\underset{K\subset\subset U}{\varprojlim}\;\underset{h>0}{\varprojlim}\;\mathcal{E}_{M,\Phi,h}(K)=\underset{K\subset\subset U}{\varprojlim}\;\mathcal{E}_{(M,\Phi)}(K).$$
	
		Note that condition \eqref{Philiminf} means that the factor $h^{\Phi_{|\alpha|}}$ in the seminorms is at least geometric.

	\begin{remark}\label{otherclassesremark} In the literature, global ultradifferentiable classes, test function spaces, ultraholomorphic classes, spaces of weighted sequences of complex numbers and,  PTT-test function spaces (see \cite{PTT2015})  have been  studied.
	In a completely analogous way, global $\Phi$-ultradifferentiable classes, $\Phi$-test function spaces, $\Phi$-ultraholomorphic classes and $\Phi$-spaces of weighted sequences of complex numbers (weighted with seminorms of the type \eqref{Phiseminorm}) can be defined ; i.e., when the symbol/functor $\mathcal{E}$  is replaced by $\mathcal{B}$, $\mathcal{D}$, $\mathcal{A}$ or $\Lambda$.
	\end{remark}
	
	\textbf{Notation:} When the open set $U$ is not relevant in certain statement, we will simply write $\mathcal{E}_{\{\mathcal{M}\}}$, $\mathcal{E}_{(\mathcal{M})}$,  $\mathcal{E}_{\{\tau,\sigma\}}$, $\mathcal{E}_{ (\tau,\sigma )}$ or  $\mathcal{E}_{\{M,\Phi\}}$, $\mathcal{E}_{(M,\Phi)}$.

	\begin{example}\label{PPTexamples}
		We have two important examples in mind:
		\begin{itemize}
			\item[$(a)$] If $\Phi_j=j$ for all $j\in \NN$, then we recover the classes from Section \ref{ultradiffclassessection}; i.e., the usual definition of ultradifferentiable classes defined in terms of weight sequences; e.g. see \cite{Komatsu73}. For reasons of simplicity we will skip the letter $\Phi$ in the definition.
			
			\item[$(b)$] Let parameters $\sigma>1$ and $\tau>0$ be given. Then the choices
			\begin{equation}\label{PPTsequence}
				M^{\tau,\sigma}_j:=j^{\tau j^{\sigma}},\hspace{15pt}\Phi_j:=j^{\sigma},\;\;\;j\in\NN,
			\end{equation}
			yield the classes introduced in \cite{PTT2015}. We use the convention $0^0:=1$ and so $M^{\tau,\sigma}_0=1$.
		\end{itemize}
	\end{example}
	
	
	Given an exponent sequence $\Phi\in \RR_{\ge0}^{\NN}$ and a pair of sequences  $M,N\in\RR_{>0}^{\NN}$,  we want to compare the classes $\mathcal{E}_{\{M,\Phi\}}$ and $\mathcal{E}_{\{N,\Phi\}}$, or resp.  $\mathcal{E}_{(M,\Phi)}$ and $\mathcal{E}_{(N,\Phi)}$. We write $M\hypertarget{preceq}{\preceq_{\Phi}}N$ if $$\sup_{j\in\NN_{>0}}\left(\frac{M_j}{N_j}\right)^{1/\Phi_j}<+\infty,$$
	and we call $M$ and $N$ to be $\Phi$-{\itshape equivalent}, written as $M\hypertarget{approx}{\approx_{\Phi}}N$, if $M\hyperlink{preceq}{\preceq_{\Phi}}N$ and $N\hyperlink{preceq}{\preceq_{\Phi}}M$.
	
	By definition obviously $M\hyperlink{preceq}{\preceq_{\Phi}}N$ implies both $\mathcal{E}_{\{M,\Phi\}}\subseteq\mathcal{E}_{\{N,\Phi\}}$ and $\mathcal{E}_{(M,\Phi)}\subseteq\mathcal{E}_{(N,\Phi)}$ with continuous inclusion. Thus $\Phi$-equivalent sequences define the same associated function classes; i.e., if $M\hyperlink{approx}{\approx_{\Phi}}N$ then $\mathcal{E}_{\{M,\Phi\}}=\mathcal{E}_{\{N,\Phi\}}$ and $\mathcal{E}_{(M,\Phi)}=\mathcal{E}_{(N,\Phi)}$ (as topological vector spaces) and similarly for the other classes mentioned in Remark \ref{otherclassesremark}. Note that for $\Phi_j=j$ relation \hyperlink{preceq}{$\preceq_{\Phi}$} is precisely \hyperlink{mpreceq}{$\preceq$}; see \cite[p. 101, Prop. 2.12 $(1)$]{compositionpaper}.
	
	Analogously, we write $M\hypertarget{triangle}{\vartriangleleft_{\Phi}}N$ if $$\lim_{j\rightarrow+\infty}\left(\frac{M_j}{N_j}\right)^{1/\Phi_j}=0,$$
	which is obviously never reflexive and symmetric and stronger than \hyperlink{preceq}{$\preceq_{\Phi}$}. If $M\hyperlink{triangle}{\vartriangleleft_{\Phi}}N$, then $\mathcal{E}_{\{M,\Phi\}}\subseteq\mathcal{E}_{(N,\Phi)}$ with continuous inclusion (and similarly for the other classes mentioned in Remark \ref{otherclassesremark}).\vspace{6pt}
	
	
	\section{Classes beyond geometric factors versus weight matrices}\label{ultrabeyondvsweightmatrixsection}
	The aim of this section is to verify that, under mild growth and regularity assumptions on $\Phi$, the  classes $\mathcal{E}_{\{M,\Phi\}}$ resp. $\mathcal{E}_{(M,\Phi)}$ can be represented (as locally convex vector spaces) by the  matrix classes $\mathcal{E}_{\{\mathcal{M}\}}$ resp. $\mathcal{E}_{(\mathcal{M})}$ for a suitable choice of the matrix $\mathcal{M}$.
	
	\subsection{Preparatory results}
	We introduce an appropriate matrix of sequences. Let $M\in\RR_{>0}^{\NN}$ and $\Phi\in \RR_{\ge0}^{\NN}$, then consider the set
	\begin{equation}\label{convenientmatrix}
		\mathcal{M}_{M,\Phi}:=\{M^{(c,\Phi)}: c>0\},\hspace{15pt}M^{(c,\Phi)}_j:=c^{\Phi_j}M_j,\;\;\;j\in\NN.
	\end{equation}
	For any sequence $\Phi\in\RR_{\ge0}^{\NN}$, we have that $\mathcal{M}_{M,\Phi}$ is a weight matrix in the notion of Section \ref{weightmatrixsection}. Next we show that the mild growth restriction  \eqref{Philiminf} on $\Phi$ is equivalent to the fact that the matrix  $\mathcal{M}_{M,\Phi}$ allows to absorb exponential growth.
	
	\begin{lemma}\label{lemma.MLCondition} Let $M\in\RR_{>0}^{\NN}$ and $\Phi\in\RR_{\ge0}^{\NN}$ be given. Then the following are equivalent:
		\begin{enumerate}
			\item[$(i)$] $\Phi$ satisfies \eqref{Philiminf}, i.e., $\Phi$ is an exponent sequence.
			\item[$(ii)$] $\mathcal{M}_{M,\Phi}$ satisfies \hyperlink{R-L}{$(\mathcal{M}_{\{\on{L}\}})$}.
			\item[$(iii)$] $\mathcal{M}_{M,\Phi}$ satisfies \hyperlink{B-L}{$(\mathcal{M}_{(\on{L})})$}.
		\end{enumerate}
	\end{lemma}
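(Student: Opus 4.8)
The plan is to show the chain of implications $(i)\Rightarrow(ii)\Rightarrow(iii)\Rightarrow(i)$, or possibly prove $(i)\Leftrightarrow(ii)$ and $(i)\Leftrightarrow(iii)$ separately; the two directions involving $(i)$ are symmetric enough that one argument handles both once one unwinds the quantifiers. The key computational observation is that for the matrix $\mathcal M_{M,\Phi}$ the members differ only by the factor $c^{\Phi_j}$, so multiplying $M^{(a,\Phi)}_j$ by a geometric factor $C^j$ gives $C^j a^{\Phi_j} M_j$, and we want to dominate this by $D\, b^{\Phi_j} M_j$ for some larger index $b$ and constant $D$. After cancelling $M_j$, the condition \hyperlink{R-L}{$(\mathcal M_{\{\mathrm L\}})$} for $\mathcal M_{M,\Phi}$ becomes: for every $C>0$ and every $a>0$ there exist $D>0$ and $b\ge a$ with $C^j a^{\Phi_j}\le D\, b^{\Phi_j}$ for all $j$, i.e.
\[
C^j\le D\,(b/a)^{\Phi_j},\qquad j\in\NN .
\]
Since $M$ has been completely eliminated, the equivalence is purely a statement about $\Phi$, which is why the same $\Phi$ works for every $M$.

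For the direction $(i)\Rightarrow(ii)$, assume $\liminf_{j\to\infty}\Phi_j/j=:\delta>0$. Then there is $j_0$ with $\Phi_j\ge (\delta/2) j$ for all $j\ge j_0$. Given $C>0$ and $a>0$, choose $b>a$ large enough that $(b/a)^{\delta/2}\ge C$, equivalently $b\ge a\,C^{2/\delta}$; then for $j\ge j_0$ we have $(b/a)^{\Phi_j}\ge (b/a)^{(\delta/2)j}\ge C^j$, so the inequality $C^j\le D(b/a)^{\Phi_j}$ holds with constant $1$ there, and for the finitely many $j<j_0$ one absorbs everything into a sufficiently large $D$ (using $\Phi_j\ge 0$, so $(b/a)^{\Phi_j}\ge 1$ when $b\ge a$, hence it suffices to take $D\ge\max_{j<j_0}C^j$). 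This gives \hyperlink{R-L}{$(\mathcal M_{\{\mathrm L\}})$}. The argument for \hyperlink{B-L}{$(\mathcal M_{(\mathrm L)})$} is identical after rewriting its quantifiers: one needs, for every $C>0$ and every $a>0$, some $D>0$ and some $b\le a$ with $C^j b^{\Phi_j}\le D a^{\Phi_j}$, i.e. $C^j\le D(a/b)^{\Phi_j}$, which is the same type of estimate with the roles of the two indices swapped; choosing $b$ small so that $a/b$ is large does the job.

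For the converse, I would argue contrapositively: if \eqref{Philiminf} fails, then $\liminf_{j\to\infty}\Phi_j/j=0$, so there is a subsequence $j_k\to\infty$ with $\Phi_{j_k}/j_k\to 0$. Fix $a=1$ (say) and take any candidate $b>1$ and $D>0$; the required inequality $C^j\le D\,b^{\Phi_j}$ along the subsequence reads $C^{j_k}\le D\,b^{\Phi_{j_k}}$, i.e. $j_k\log C\le \log D+\Phi_{j_k}\log b$. Dividing by $j_k$ and letting $k\to\infty$ forces $\log C\le 0$, contradicting the freedom to pick $C>1$. Hence \hyperlink{R-L}{$(\mathcal M_{\{\mathrm L\}})$} fails, and symmetrically \hyperlink{B-L}{$(\mathcal M_{(\mathrm L)})$} fails. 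I do not anticipate a genuine obstacle here; the only point requiring a little care is the bookkeeping of the quantifier order in the definitions of \hyperlink{R-L}{$(\mathcal M_{\{\mathrm L\}})$} and \hyperlink{B-L}{$(\mathcal M_{(\mathrm L)})$} and the fact that, because the index set $\mathcal I=\RR_{>0}$ is a linear continuum, "there exists $\beta$" can always be taken with $\beta$ as large (resp. as small) as we wish, so the monotonicity constraint $M^{(\alpha)}\le M^{(\beta)}$ built into the matrix causes no trouble.
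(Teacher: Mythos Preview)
Your proposal is correct and follows essentially the same approach as the paper: both reduce the conditions \hyperlink{R-L}{$(\mathcal M_{\{\mathrm L\}})$} and \hyperlink{B-L}{$(\mathcal M_{(\mathrm L)})$} to the pure inequality $C^j\le D(b/a)^{\Phi_j}$ after cancelling $M_j$, then use the lower bound $\Phi_j\ge\epsilon j$ for large $j$ to produce the required index change, absorbing finitely many exceptional terms into the constant. The only cosmetic difference is that the paper proves $(ii)\Rightarrow(i)$ directly (plugging in $h=2$, $c=1$ and solving for $\Phi_j/j$), whereas you argue contrapositively via a subsequence with $\Phi_{j_k}/j_k\to 0$; these are the same computation read in opposite directions.
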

	\begin{proof} $(i)\Leftrightarrow (ii)$ First we observe that $\mathcal{M}_{M,\Phi}$ satisfies \hyperlink{R-L}{$(\mathcal{M}_{\{\text{L}\}})$} if and only if
		$$\forall\;h>0\;\forall\;c>0\;\exists\;D>0\;\exists\; c_1>0\;\forall\;j\in\NN: h^j c^{\Phi_j} \le D c_1^{\Phi_j}.$$
		Consequently, if (ii) holds,  then for $h=2$, $c=1$ there exists
		$c_1 > 1 $  and $D>0$ such that
		$$\frac{\Phi_j}{j} \geq \frac{\log(2)-(1/j)\log(D)}{\log(c_1)},$$
		for all $j\in\NN_{>0}$, so $\Phi$ satisfies \eqref{Philiminf}.
		
		Conversely, let $h,c\ge 1$ be given. Then \eqref{Philiminf} yields the existence of some $\epsilon>0$ and $j_{\epsilon}\in\NN_{>0}$ such that for all $j\ge j_{\epsilon}$ we get $\frac{\Phi_j}{j}\ge\epsilon$. So there exists some $c_1>c$ such that $\left(\frac{c_1}{c}\right)^{\Phi_j/j}\ge\left(\frac{c_1}{c}\right)^{\epsilon}\ge h$ for all $j\ge j_{\epsilon}$. Thus, when choosing $A\ge 1$ large enough, then we have that for all $j\in\NN$
		$$h^jM^{(c,\Phi)}_j=h^jc^{\Phi_j}M_j\le Ac_1^{\Phi_j}M_j=AM^{(c_1,\Phi)}_j,$$
		which shows that  $\mathcal{M}_{M,\Phi}$ satisfies \hyperlink{R-L}{$(\mathcal{M}_{\{\text{L}\}})$}.

		$(i) \Leftrightarrow (iii)$ Follows similarly.
	\end{proof}
	
			
			
		

	If we assume more growth requirements on the sequence $\Phi$, then we can deduce further regularity conditions for $\mathcal{M}_{M,\Phi}$.
	
	\begin{proposition}\label{Phigrowthprop}
		Let $M\in\RR_{>0}^{\NN}$ and $\Phi\in\RR_{\ge0}^{\NN}$ be given.
		\begin{itemize}
			\item[$(i)$] If $M$ is normalized and if $\Phi_0=\Phi_1=0$, then each $M^{(c,\Phi)}$ is normalized, too.

			\item[$(ii)$] We observe that $M^{(c,\Phi)}$ is log-convex if and only if
			\begin{equation}\label{McPhilogconv}
				j\mapsto\mu^{(c,\Phi)}_j=c^{\Phi_j-\Phi_{j-1}}\mu_j \qquad \text{is non-decreasing.}
			\end{equation}
			In particular, if $M$ is log-convex, i.e., $j\mapsto\mu_j$ is non-decreasing, and if $\Phi$ is convex,  i.e.,
			$$\forall\;j\in\NN_{>0}:\;\;\;2\Phi_j\leq \Phi_{j-1} + \Phi_{j+1},$$
			then $M^{(c,\Phi)}$ is log-convex for each $c\ge 1$.
			
				Moreover, if $\Phi$ is  convex and \eqref{McPhilogconv} holds for some $c>0$, then also for all $d> c$.

			\item[$(iii)$] If $\Phi$ is increasing, convex, and $\Phi_0=0$, then it is an exponent sequence. Moreover, since $\Phi_{j}-\Phi_{j-1}\ge 0$, we get
			\begin{equation*}\label{mumatrixphi}
				\forall\;0<c_1\le c_2\;\forall\;j\ge 1:\;\;\;\mu^{(c_1,\Phi)}_j=\frac{M^{(c_1,\Phi)}_j}{M^{(c_1,\Phi)}_{j-1}}
				=c_1^{\Phi_j-\Phi_{j-1}}\mu_j\le c_2^{\Phi_{j}-\Phi_{j-1}}\mu_j=\mu^{(c_2,\Phi)}_j,
			\end{equation*}
			i.e., the sequences are even ordered w.r.t. their corresponding quotient sequences.

			\item[$(iv)$]  If $(M_j)^{1/j}\rightarrow\infty$ as $j\rightarrow\infty$, and $\Phi$ is an exponent sequence, then
			$$\forall\;c\ge 1:\;\;(M^{(c,\Phi)}_j)^{1/j}= c^{\Phi_j/j}(M_j)^{1/j}\rightarrow\infty,\;j\rightarrow\infty.$$
			

		\end{itemize}
	\end{proposition}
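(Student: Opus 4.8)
The statement to prove is Proposition \ref{Phigrowthprop}, which collects four elementary facts about the matrix $\mathcal{M}_{M,\Phi}$ built from a sequence $M$ and an exponent-like sequence $\Phi$. Each part is a short direct verification, so the plan is to handle them one by one, the only real content being the log-convexity computations in parts $(ii)$ and $(iii)$.

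For $(i)$, I would simply evaluate: since $M^{(c,\Phi)}_j = c^{\Phi_j} M_j$ and $\Phi_0 = \Phi_1 = 0$, we get $M^{(c,\Phi)}_0 = c^0 M_0 = M_0 = 1$ and $M^{(c,\Phi)}_1 = c^0 M_1 = M_1 \ge 1$, so normalization is inherited directly from $M$. For $(ii)$, recall that log-convexity of a positive sequence is equivalent to its quotient sequence being non-decreasing; here $\mu^{(c,\Phi)}_j = M^{(c,\Phi)}_j / M^{(c,\Phi)}_{j-1} = c^{\Phi_j - \Phi_{j-1}} \mu_j$, which gives \eqref{McPhilogconv}. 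If $M$ is log-convex then $\mu_j$ is non-decreasing; if $\Phi$ is convex then $j \mapsto \Phi_j - \Phi_{j-1}$ is non-decreasing, hence for $c \ge 1$ the factor $c^{\Phi_j - \Phi_{j-1}}$ is non-decreasing in $j$, and a product of two non-decreasing non-negative sequences is non-decreasing, giving log-convexity of $M^{(c,\Phi)}$. For the last clause of $(ii)$, if \eqref{McPhilogconv} holds for some $c>0$ and $\Phi$ is convex, then for $d > c$ write $\mu^{(d,\Phi)}_j = (d/c)^{\Phi_j - \Phi_{j-1}} \mu^{(c,\Phi)}_j$; since $d/c > 1$ and $\Phi_j - \Phi_{j-1}$ is non-decreasing (so $(d/c)^{\Phi_j-\Phi_{j-1}}$ is non-decreasing), the product with the already non-decreasing $\mu^{(c,\Phi)}_j$ is again non-decreasing.

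For $(iii)$: if $\Phi$ is increasing, convex, and $\Phi_0 = 0$, then convexity forces $\Phi_j - \Phi_{j-1}$ to be non-decreasing; since $\Phi$ is increasing, $\Phi_1 - \Phi_0 = \Phi_1 > 0$, so $\Phi_j - \Phi_{j-1} \ge \Phi_1 > 0$ for all $j \ge 1$, hence $\Phi_j \ge j \Phi_1$ and thus $\liminf_j \Phi_j/j \ge \Phi_1 > 0$, showing $\Phi$ is an exponent sequence. The ordering of the quotient sequences is then immediate: for $0 < c_1 \le c_2$ and $j \ge 1$, since $\Phi_j - \Phi_{j-1} \ge 0$ we have $c_1^{\Phi_j - \Phi_{j-1}} \le c_2^{\Phi_j - \Phi_{j-1}}$, so $\mu^{(c_1,\Phi)}_j = c_1^{\Phi_j-\Phi_{j-1}}\mu_j \le c_2^{\Phi_j-\Phi_{j-1}}\mu_j = \mu^{(c_2,\Phi)}_j$. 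Finally, for $(iv)$: assuming $(M_j)^{1/j} \to \infty$ and $\Phi$ an exponent sequence, pick $\epsilon > 0$ with $\Phi_j/j \ge \epsilon$ eventually; then for $c \ge 1$ one has $(M^{(c,\Phi)}_j)^{1/j} = c^{\Phi_j/j}(M_j)^{1/j} \ge c^{\epsilon} (M_j)^{1/j} \to \infty$ (and for $c=1$ there is nothing to show). No step is a genuine obstacle — the mild subtlety is just to use convexity of $\Phi$ correctly as "non-decreasing increments" and to remember that products of non-decreasing non-negative sequences stay non-decreasing; I would state that monotonicity lemma explicitly where it is used.
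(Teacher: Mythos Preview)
Your proposal is correct and matches the paper's approach for parts $(i)$, $(ii)$, and $(iv)$, which the paper also treats as direct consequences; your treatment of the last clause of $(ii)$ via the factorization $\mu^{(d,\Phi)}_j = (d/c)^{\Phi_j-\Phi_{j-1}}\mu^{(c,\Phi)}_j$ is exactly what the paper does.

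For the exponent-sequence claim in $(iii)$ you take a slightly different route: you use that the increments $\Phi_j-\Phi_{j-1}$ are non-decreasing to bound $\Phi_j\ge j\Phi_1$ and conclude $\liminf_j \Phi_j/j\ge\Phi_1>0$. The paper instead shows that $j\mapsto\Phi_j/j$ is itself non-decreasing (via $\Phi_j-\Phi_{j-1}\ge\Phi_{j-1}/(j-1)$) and then observes this ratio is eventually positive. Your argument is more direct but hinges on $\Phi_1>0$, i.e.\ on reading ``increasing'' as strictly increasing; the paper's version only needs $\Phi_j>0$ for \emph{some} $j$, so it is marginally more robust if ``increasing'' is read as non-decreasing. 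Either way the content is the same elementary convexity fact, and your proof is complete under the natural reading of the hypothesis.
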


	\begin{proof}
		$(i)$ and $(iv)$ are direct consequences. For the last statement in $(ii)$ let $d>c$ and write
$$d^{\Phi_j-\Phi_{j-1}}\mu_j=c^{\Phi_j-\Phi_{j-1}}\mu_j\left(\frac{d}{c}\right)^{\Phi_j-\Phi_{j-1}}.$$
Since $\frac{d}{c}\ge 1$ and the convexity of $\Phi$ precisely means that $j\mapsto\Phi_j-\Phi_{j-1}$ is non-decreasing we get that $j\mapsto d^{\Phi_j-\Phi_{j-1}}\mu_j$ is non-decreasing as well.\vspace{6pt}

Let us give an argument for $(iii)$:    From convexity since $\Phi_0=0$, one can deduce that $j \mapsto \frac{\Phi_j}{j}$ is non-decreasing:\par
		Observe that
		\[
		\Phi_{j}-\Phi_{j-1} \ge \frac{\Phi_{j-1}}{j-1},
		\]
		and therefore
		\[
		\Phi_j=(j-1)\frac{\Phi_{j-1}}{j-1} + \Phi_j-\Phi_{j-1}\ge j \frac{\Phi_{j-1}}{j-1}.
		\]
		Since $\Phi$ is increasing we get $\Phi_j>0$ for all large $j$ and hence $\frac{\Phi_j}{j}$ is bounded away from $0$.
	\end{proof}


	\begin{remark}\label{log-remark}
		Since log-convexity for $M^{(c,\Phi)}$ and $(M^{(c,\Phi)}_j)^{1/j}\rightarrow\infty$ as $j\rightarrow\infty$ {\itshape for each} $c>0$ are desirable (standard) properties in the theory of ultradifferentiable (and ultraholomorphic) classes, statements $(ii)$ and $(iv)$ in Proposition \ref{Phigrowthprop} suggest  that for applications the choices for $\Phi$ and $M$ should not considered to be independent; cf. \eqref{PPTsequence}.
		
		In concrete applications the requirement $(M^{(c,\Phi)}_j)^{1/j}\rightarrow\infty$ as $j\rightarrow\infty$ for each $c>0$ might be checked easily. However, even if  $M$ is log-convex, $\Phi$ is convex and $M$ and $\Phi$ are well related as in \eqref{PPTsequence}, in general as $c\rightarrow 0$ one can only expect that condition \eqref{McPhilogconv} will be satisfied from some $j_c\in\NN_{>0}$ on (and $j_c\rightarrow+\infty$ as $c\rightarrow 0$). Nevertheless, in this situation one can replace each $M^{(c,\Phi)}$ (for $c<1$ small) by some equivalent sequence when changing $M^{(c,\Phi)}$ at the beginning, i.e., only for finitely many $j$. This technical modification leaves the classes $\mathcal{E}_{\{\mathcal{M}_{M,\Phi}\}}$ and $\mathcal{E}_{(\mathcal{M}_{M,\Phi})}$ unchanged.
	\end{remark}
	
	\subsection{Comparison results}
	
	This section is devoted to formulate and prove the main comparison theorems. Using the preparation from the previous section we are in position to prove the first statement.
	
	\begin{theorem}\label{PPTasweightmatrixthm}
		Let $M\in\RR_{>0}^{\NN}$ be given and let $\Phi$ be an exponent sequence. Let
		$\mathcal{M}_{M,\Phi}$ be the matrix defined in \eqref{convenientmatrix}, then as locally convex vector spaces we get
		\begin{equation}\label{PPTasweightmatrixthmequ}
			\mathcal{E}_{\{M,\Phi\}}=\mathcal{E}_{\{\mathcal{M}_{M,\Phi}\}},\hspace{15pt}\mathcal{E}_{(M,\Phi)}=\mathcal{E}_{(\mathcal{M}_{M,\Phi})}.
		\end{equation}
		By the analogous definitions of the spaces we see that \eqref{PPTasweightmatrixthmequ} also holds for the other classes mentioned in Remark \ref{otherclassesremark}.
	\end{theorem}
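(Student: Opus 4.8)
The plan is to reduce the two identities to a comparison of the elementary Banach building blocks and then to apply Lemma \ref{lemma.MLCondition}. The key observation is that, by the very definition \eqref{convenientmatrix}, we have $h^{\Phi_{|\alpha|}}M_{|\alpha|}=M^{(h,\Phi)}_{|\alpha|}$, so for every sufficiently regular compact $K$ the Banach space $\mathcal{E}_{M,\Phi,h}(K)$ coincides \emph{isometrically} with $\mathcal{E}_{M^{(h,\Phi)},1}(K)$ (same norm, written out). Consequently $\mathcal{E}_{\{M,\Phi\}}(K)=\varinjlim_{h>0}\mathcal{E}_{M^{(h,\Phi)},1}(K)$ and $\mathcal{E}_{(M,\Phi)}(K)=\varprojlim_{h>0}\mathcal{E}_{M^{(h,\Phi)},1}(K)$, whereas by the matrix-class representations recalled in Section \ref{ultradiffclassessection} we have $\mathcal{E}_{\{\mathcal{M}_{M,\Phi}\}}(K)=\varinjlim_{c>0}\varinjlim_{g>0}\mathcal{E}_{M^{(c,\Phi)},g}(K)$ and $\mathcal{E}_{(\mathcal{M}_{M,\Phi})}(K)=\varprojlim_{c>0}\varprojlim_{g>0}\mathcal{E}_{M^{(c,\Phi)},g}(K)$. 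Thus it suffices to show that the directed family $\{\mathcal{E}_{M^{(h,\Phi)},1}(K):h>0\}$ is mutually cofinal with $\{\mathcal{E}_{M^{(c,\Phi)},g}(K):c,g>0\}$, in the inductive sense for the Roumieu case and in the projective sense for the Beurling case; passing to $\varprojlim_{K\subset\subset U}$ afterwards is routine, since all the resulting isomorphisms commute with the restriction maps, and the same is true of the analogous classes of Remark \ref{otherclassesremark}.

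For the Roumieu case, the family $\{\mathcal{E}_{M^{(h,\Phi)},1}(K)\}$ is a subfamily of $\{\mathcal{E}_{M^{(c,\Phi)},g}(K)\}$ (take $c=h$, $g=1$), which gives one continuous linear map between the inductive limits. For cofinality in the other direction, given $c,g>0$ I use that by Lemma \ref{lemma.MLCondition} the matrix $\mathcal{M}_{M,\Phi}$ satisfies $(\mathcal{M}_{\{\mathrm{L}\}})$, so there are $c_1>0$ and $D>0$ with $g^{j}M^{(c,\Phi)}_{j}\le D\,M^{(c_1,\Phi)}_{j}$ for all $j$, hence the inclusion $\mathcal{E}_{M^{(c,\Phi)},g}(K)\hookrightarrow\mathcal{E}_{M^{(c_1,\Phi)},1}(K)$ is continuous. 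This yields a continuous linear map in the reverse direction, and the two directed systems therefore have the same inductive limit as locally convex vector spaces; this proves the first identity in \eqref{PPTasweightmatrixthmequ} on each $K$, hence on $U$. I do not expect a serious obstacle here: the only non-formal input is exactly $(\mathcal{M}_{\{\mathrm{L}\}})$, i.e. the possibility of absorbing a geometric factor $g^{j}$ by changing the parameter $c$, which is precisely what Lemma \ref{lemma.MLCondition} (via \eqref{Philiminf}) provides; what remains is the standard cofinality bookkeeping for (countably generated) inductive limits.

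For the Beurling case the argument is dual. Again $\{\mathcal{E}_{M^{(h,\Phi)},1}(K)\}\subseteq\{\mathcal{E}_{M^{(c,\Phi)},g}(K)\}$, so one projective system maps to the other; and conversely, given $c,g>0$, Lemma \ref{lemma.MLCondition} provides $(\mathcal{M}_{(\mathrm{L})})$, i.e. there are $h>0$ and $D>0$ with $M^{(h,\Phi)}_{j}\le D\,g^{j}M^{(c,\Phi)}_{j}$ for all $j$, whence $\mathcal{E}_{M^{(h,\Phi)},1}(K)\hookrightarrow\mathcal{E}_{M^{(c,\Phi)},g}(K)$ continuously. This is exactly the cofinality condition ensuring that the two projective systems have the same limit, and passing to $\varprojlim_{K\subset\subset U}$ gives $\mathcal{E}_{(M,\Phi)}(U)=\mathcal{E}_{(\mathcal{M}_{M,\Phi})}(U)$ as locally convex spaces. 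Since the whole comparison only ever pits seminorms of the form $\|\cdot\|_{M,\Phi,K,h}$ against seminorms of the form $\|\cdot\|_{M^{(c,\Phi)},K,g}$, it transfers verbatim when the functor $\mathcal{E}$ is replaced by $\mathcal{B}$, $\mathcal{D}$, $\mathcal{A}$ or $\Lambda$, which establishes the final assertion of the theorem.
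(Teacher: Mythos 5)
Your proposal is correct and follows essentially the same route as the paper: the trivial direction rests on the identity $h^{\Phi_j}M_j=M^{(h,\Phi)}_j$, and the non-trivial direction is exactly the absorption of the geometric factor via Lemma \ref{lemma.MLCondition} (conditions $(\mathcal{M}_{\{\on{L}\}})$ resp. $(\mathcal{M}_{(\on{L})})$). Your recasting of the argument as mutual cofinality of the two systems of Banach spaces is just a more explicit bookkeeping of the same inequalities the paper uses.
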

	
	Both cases from Example \ref{PPTexamples} satisfy \eqref{Philiminf}; if $\Phi_j=j$ and so we are treating the classical situation, then the above result becomes trivial in the sense that $M^{(c,\Phi)}\hyperlink{approx}{\approx}M$ for all $c>0$, i.e., $\mathcal{M}_{M,\Phi}$ is constant.
	
	\demo{Proof} {\itshape The Roumieu case.}
	By definition, we have the following estimate
	$$\forall\;c\ge 1\;\forall\;h\ge 1\;\forall\;j\in\NN:\;\;\; c^{\Phi_j}M_j=M^{(c,\Phi)}_j\le h^jM^{(c,\Phi)}_j,$$
	that is verifying $\mathcal{E}_{\{M,\Phi\}}\subseteq\mathcal{E}_{\{\mathcal{M}_{M,\Phi}\}}$ (with continuous inclusion).
	
	Conversely, let $h,c\ge 1$ be given. By  Lemma \ref{lemma.MLCondition}, there exist $c_1, A \ge 1$ such that for all $j\in\NN$
	$$h^jM^{(c,\Phi)}_j \le AM^{(c_1,\Phi)}_j,$$
	which shows $\mathcal{E}_{\{\mathcal{M}_{M,\Phi}\}}\subseteq\mathcal{E}_{\{M,\Phi\}}$ (with continuous inclusion).
	\vspace{6pt}
	
	{\itshape The Beurling case.} Follows analogously, but in this case we use Lemma \ref{lemma.MLCondition} to prove the (continuous) inclusion $\mathcal{E}_{(M,\Phi)}\subseteq\mathcal{E}_{(\mathcal{M}_{M,\Phi})}$ .
	\qed\enddemo

	\vspace{5mm}
	\par
	On the other hand, let us show now that condition \eqref{Philiminf} is also necessary to obtain \eqref{PPTasweightmatrixthmequ} (or even more), when assuming mild extra assumptions on $M$.
	
	A crucial part of the proof of the Roumieu case is based upon the existence of so-called {\itshape optimal functions} in Roumieu classes: For any given normalized log-convex sequence $N$, we consider the function
	\begin{equation}\label{characterizingfct}
		\theta_N(t):=\sum_{j=0}^{\infty}\frac{N_j}{2^j\nu_j^j}\exp(2i\nu_jt),\;\;\;t\in\RR,
	\end{equation}
with $\nu_j:=\frac{N_j}{N_{j-1}}$ for $j\ge 1$ and $\nu_0:=1$. It is known that
	$$\theta_N\in\mathcal{E}_{\{N\}}(\RR,\CC),\hspace{25pt}|\theta_N^{(j)}(0)|\ge N_j\;\;\;\forall\;j\in\NN;$$
	see e.g. \cite[Thm. 1]{thilliez}, \cite[Lemma 2.9]{compositionpaper} and the detailed proof in \cite[Prop. 3.1.2]{diploma}. There it has been commented that $\theta_N\notin\mathcal{E}_{(N)}(\RR,\CC)$ and the proof shows that in \eqref{characterizingfct} we can replace $\nu_j$ by $\nu_{j+1}$.
	
	The proof of the  Beurling case makes use of the following functional analytic result.
	
	\begin{proposition}
		\label{funcanaprop}
		Let $E,F$ be Fr\'echet spaces, such that $E$ is a linear subspace of $F$ (not assuming continuous inclusion). Assume that both are continuously included in $C(U)$ (or even in any Hausdorff space), where $U$ is some open subset of $\mathbb{R}^d$. Then $E$ is continuously included in $F$.
	\end{proposition}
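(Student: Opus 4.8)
The plan is to invoke the closed graph theorem for Fréchet spaces. Recall that a linear map between Fréchet spaces is continuous if and only if its graph is closed. So I would consider the inclusion map $\iota\colon E\to F$, which is linear by hypothesis, and show that its graph is closed in $E\times F$. Then continuity of $\iota$ is precisely the assertion that $E$ is continuously included in $F$.

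First I would set up the graph: let $(x_n)_n$ be a sequence in $E$ (since Fréchet spaces are metrizable, closedness can be tested with sequences) such that $x_n\to x$ in $E$ and $\iota(x_n)=x_n\to y$ in $F$; the goal is to show $y=x$ (as elements of $F$, equivalently of $E$, since $E\subseteq F$ as sets). Here is where the common target space $C(U)$ (with its topology of, say, uniform convergence on compact sets, or any Hausdorff topology making both inclusions continuous) enters: convergence $x_n\to x$ in $E$ implies $x_n\to x$ in $C(U)$, and convergence $x_n\to y$ in $F$ implies $x_n\to y$ in $C(U)$. Since $C(U)$ is Hausdorff, a sequence has at most one limit, so $x=y$ in $C(U)$. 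As the inclusions $E\hookrightarrow C(U)$ and $F\hookrightarrow C(U)$ are injective (they are inclusions of sets), this forces the elements to coincide in $F$ as well, i.e.\ $y=x$. Hence the graph of $\iota$ is closed.

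Finally I would apply the closed graph theorem: $E$ and $F$ are Fréchet spaces and $\iota\colon E\to F$ is a linear map with closed graph, hence $\iota$ is continuous, which is exactly the statement that $E$ is continuously included in $F$.

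I do not expect any serious obstacle here; the only points requiring a little care are the reductions to sequences (legitimate because Fréchet spaces are metrizable), the verification that the two inclusions into $C(U)$ are genuinely injective as set maps so that equality in $C(U)$ transfers back to $F$, and citing the correct form of the closed graph theorem for Fréchet (or more generally $F$-) spaces. None of these is difficult; the argument is essentially the standard ``uniqueness of limits in a common Hausdorff target'' trick combined with the closed graph theorem.
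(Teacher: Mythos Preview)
Your proposal is correct and follows essentially the same approach as the paper: apply the closed graph theorem to the inclusion $\iota\colon E\to F$, and verify closedness of the graph by pushing convergent sequences into the common Hausdorff target $C(U)$ and using uniqueness of limits there. The paper's version is slightly terser (it tests only the case $x_n\to 0$ in $E$, which suffices by linearity), but the argument is the same.
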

	
	\begin{proof}
		We want to show that the inclusion $\iota:E \rightarrow F$ is continuous. By the closed graph theorem, it suffices to show that if $f_n \rightarrow 0$ in $E$ (and thus in $C(U)$), and $f_n \rightarrow g$ in $F$ (and thus in $C(U)$), we have $g=0$. But this is now clear since $C(U)$ is Hausdorff.
	\end{proof}
	
	\begin{remark}
		In the light of the previous Proposition, any inclusion (as sets) of Beurling classes  is automatically a continuous inclusion. And equality as sets yields equality as Fr\'echet spaces.
	\end{remark}
	
	Now we are in the position to formulate a converse to Theorem \ref{PPTasweightmatrixthm}.
	
	\begin{theorem}\label{Phinecprop1}
		Let $M\in\RR_{>0}^{\NN}$ and $\Phi\in\RR_{\ge0}^{\NN}$ be given. Then we get:
		\begin{itemize}
			\item[$(i)$] The Roumieu case. Assume that $M^{(c,\Phi)}$ is log-convex and normalized for some $c \ge 1$, and let $\mathcal{L}=\{L^{(x)}: x>0\}$ be a \hyperlink{Msc}{$(\mathcal{M}_{\on{sc}})$} matrix or even only consisting of normalized log-convex weight sequences. Assume that
			$$\mathcal{E}_{\{M,\Phi\}}(\RR)=\mathcal{E}_{\{\mathcal{L}\}}(\RR)$$
			is valid (as sets). Then $\Phi$ has to satisfy \eqref{Philiminf}, i.e., $\Phi$ is an exponent sequence.
			
			\item[$(ii)$] The Beurling case. Assume that $\lim_{j\rightarrow+\infty}(M^{(c,\Phi)}_j)^{1/j}=+\infty$ for all $c>0$ and such that (w.l.o.g., cf. Remark \ref{log-remark}) each $M^{(c,\Phi)}$ is log-convex and $M^{(c,\Phi)}_0=1$. Assume also that (as sets and thus automatically as Fr\'echet spaces)
			$$\mathcal{E}_{(M,\Phi)}(\RR)=\mathcal{E}_{(\mathcal{L})}(\RR)$$
			is valid with $\mathcal{L}=\{L^{(x)}: x>0\}$ a given \hyperlink{Msc}{$(\mathcal{M}_{\on{sc}})$} matrix. Then $\Phi$ has to satisfy \eqref{Philiminf}.
		\end{itemize}
		
	\end{theorem}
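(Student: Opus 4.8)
The strategy in both cases is contrapositive: assume $\Phi$ violates \eqref{Philiminf}, so that $\liminf_{j\to\infty}\Phi_j/j=0$, and exhibit a function lying in one of the two spaces but not the other, contradicting the assumed equality. By Lemma \ref{lemma.MLCondition}, failure of \eqref{Philiminf} is equivalent to the matrix $\mathcal{M}_{M,\Phi}$ failing $(\mathcal{M}_{\{\mathrm{L}\}})$ (resp. $(\mathcal{M}_{(\mathrm{L})})$); but note this is not quite enough on its own, since we need to compare $\mathcal{E}_{[M,\Phi]}$ with a possibly unrelated matrix $\mathcal{L}$, not with $\mathcal{M}_{M,\Phi}$ itself. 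The key realization is that the space $\mathcal{E}_{\{M,\Phi\}}$ (resp. Beurling) genuinely ``absorbs no geometric factor'' along the sparse subsequence where $\Phi_j/j\to 0$, whereas any matrix class $\mathcal{E}_{[\mathcal{L}]}$ built from normalized log-convex sequences does absorb such factors because of the extra $h^j$ in its seminorms together with the union/intersection over $x$.

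\textbf{The Roumieu case.} Fix $c\ge 1$ with $N:=M^{(c,\Phi)}$ normalized log-convex; then $\mathcal{E}_{\{M,\Phi\}}\supseteq\mathcal{E}_{\{N\}}$ trivially (as in the proof of Theorem \ref{PPTasweightmatrixthm}), and by assumption $\mathcal{E}_{\{M,\Phi\}}=\mathcal{E}_{\{\mathcal{L}\}}$. I would take the optimal function $\theta_N$ from \eqref{characterizingfct}: it lies in $\mathcal{E}_{\{N\}}(\RR)\subseteq\mathcal{E}_{\{M,\Phi\}}(\RR)=\mathcal{E}_{\{\mathcal{L}\}}(\RR)$, so there is some index $x$ and some $h>0$ with $|\theta_N^{(j)}(0)|\le h^j L^{(x)}_j$ for all $j$. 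Combined with $|\theta_N^{(j)}(0)|\ge N_j=c^{\Phi_j}M_j$, this forces $c^{\Phi_j}M_j\le h^j L^{(x)}_j$, i.e. $N\preceq L^{(x)}$ after absorbing the geometric factor (use $(\mathcal{M}_{\{\mathrm{L}\}})$ is \emph{not} available for $\mathcal{L}$ a priori — but we don't need it; $h^j$ times a normalized log-convex sequence is still dominated termwise by a fixed multiple of another member only if $\mathcal{L}$ has $(\mathcal{M}_{\{\mathrm{L}\}})$). The cleaner route: since $\mathcal{E}_{\{\mathcal{L}\}}\subseteq\mathcal{E}_{\{M,\Phi\}}$ as well, apply Theorem \ref{PPTasweightmatrixthm} in reverse — pick any $L^{(x)}$, scale it geometrically, and produce via its own optimal function $\theta_{L^{(x)}}$ an element of $\mathcal{E}_{\{M,\Phi\}}$; then $|\theta_{L^{(x)}}^{(j)}(0)|\ge L^{(x)}_j$ together with membership in $\mathcal{E}_{\{M,\Phi\}}$ gives $L^{(x)}_j\le h^{\Phi_j}M_j$ for some $h$. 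Running both inequalities and choosing $h$ larger than $c$ gives, along the sparse subsequence $j_k$ with $\Phi_{j_k}/j_k\to0$, that $2^{j_k}N_{j_k}\le C^{\Phi_{j_k}}M_{j_k}=C^{\Phi_{j_k}}N_{j_k}/c^{\Phi_{j_k}}$, hence $2^{j_k}\le (C/c)^{\Phi_{j_k}}$, which since $\Phi_{j_k}=o(j_k)$ fails for large $k$ — contradiction. (One must check $\theta_N\notin\mathcal{E}_{\{L^{(x)}\}}$ for the relevant $x$ is not the point; rather it is the quantitative clash of lower and upper bounds on derivatives at $0$.)

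\textbf{The Beurling case.} Here Proposition \ref{funcanaprop} and the subsequent remark let us treat the set-equality as a Fréchet-space equality, so we may argue with seminorms and boundedness. Suppose again $\liminf\Phi_j/j=0$. By Theorem \ref{PPTasweightmatrixthm}, $\mathcal{E}_{(M,\Phi)}=\mathcal{E}_{(\mathcal{M}_{M,\Phi})}$, so the hypothesis reads $\mathcal{E}_{(\mathcal{M}_{M,\Phi})}=\mathcal{E}_{(\mathcal{L})}$. The idea is to test the relation $\mathcal{M}_{M,\Phi}(\approx)\mathcal{L}$ (which follows from the equality of Beurling classes, since $B$-equivalence of $(\mathcal{M}_{\mathrm{sc}})$ matrices is detected by the classes — one uses the Beurling analogue of the comparison in \cite[Prop. 2.12]{compositionpaper}, available because each $M^{(c,\Phi)}$ is log-convex with $(M^{(c,\Phi)}_j)^{1/j}\to\infty$). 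From $\mathcal{L}(\preceq)\mathcal{M}_{M,\Phi}$ and $\mathcal{M}_{M,\Phi}(\preceq)\mathcal{L}$ one chains: for every $c>0$ there is $c'$ with $M^{(c',\Phi)}\preceq M^{(c,\Phi)}$ through some $L^{(x)}$, and since passing through $\mathcal{L}$ can only cost fixed powers $C^j$ (because the $L^{(x)}$ are normalized log-convex, so $L^{(x)}_j\le C^j L^{(x)}_j$ is the wrong direction — one instead uses that $\mathcal{L}$ need not have $(\mathcal{M}_{(\mathrm{L})})$, but the composition of the two $B$-preceq relations does absorb an arbitrary $C^j$). Concretely: for each $C>0$ and each $c>0$, $B$-equivalence yields $c'>0$ and constant with $C^j M^{(c',\Phi)}_j\le D\, M^{(c,\Phi)}_j$, i.e. $C^j c'^{\Phi_j}\le D\, c^{\Phi_j}$, which is exactly $(\mathcal{M}_{(\mathrm{L})})$ for $\mathcal{M}_{M,\Phi}$; by Lemma \ref{lemma.MLCondition} this forces \eqref{Philiminf}, contradiction.

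\textbf{Main obstacle.} The delicate point in both cases is the passage through the \emph{a priori unrelated} matrix $\mathcal{L}$: the equality of classes only gives us comparison relations $\mathcal{M}_{M,\Phi}[\approx]\mathcal{L}$, and we must argue that composing the two halves of such an equivalence lets us absorb an \emph{arbitrary} geometric factor $C^j$ — i.e. recover $(\mathcal{M}_{[\mathrm{L}]})$ for $\mathcal{M}_{M,\Phi}$ — without assuming $\mathcal{L}$ itself has that property. In the Roumieu setting this is where the optimal functions $\theta_N$ (and $\theta_{L^{(x)}}$) are essential: they convert the abstract class equality into honest two-sided pointwise derivative bounds, turning the absorption problem into the elementary inequality $2^{j_k}\le(C/c)^{\Phi_{j_k}}$ along a sparse subsequence. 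In the Beurling setting the functional-analytic Proposition \ref{funcanaprop} plays the analogous enabling role, reducing everything to a boundedness/seminorm comparison; one then has to be careful that the Beurling-type comparison of the classes really does yield $B$-equivalence of the matrices, which requires the standing log-convexity and $(M_j^{(c,\Phi)})^{1/j}\to\infty$ hypotheses.
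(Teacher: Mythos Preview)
Your overall strategy (optimal functions in the Roumieu case, Proposition~\ref{funcanaprop} in the Beurling case, argue by contradiction on $\liminf\Phi_j/j=0$) matches the paper's, but both halves of your execution have genuine gaps.

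\emph{Roumieu case.} The two inequalities you actually derive, $c^{\Phi_j}M_j\le h^{j}L^{(x)}_j$ and $L^{(x)}_j\le h'^{\Phi_j}M_j$ for a \emph{fixed} index $x$ and fixed $h,h'$, combine only to $(c/h')^{\Phi_j}\le h^j$, which is trivially satisfied for $h\ge 1$, $h'\ge c$ and gives no contradiction along a subsequence with $\Phi_{j_k}/j_k\to 0$. Your displayed inequality $2^{j_k}N_{j_k}\le C^{\Phi_{j_k}}M_{j_k}$ does not follow from what you wrote; the $2^{j_k}$ has no source. The paper's decisive idea is to let the geometric factor \emph{vary to infinity on the $\mathcal{L}$-side}: for every $n\in\NN$ one considers the optimal function of the sequence $j\mapsto n^jL^{(n)}_j$, which lies in $\mathcal{E}_{\{L^{(n)}\}}\subseteq\mathcal{E}_{\{\mathcal{L}\}}=\mathcal{E}_{\{M,\Phi\}}$, yielding $n^jL^{(n)}_j\le c_n^{\Phi_j}M_j$. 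Combined with the first estimate $M_j\le h^{j+1}L^{(x_0)}_j$ one gets $n/h^2\le c_n^{\Phi_{j_n}/j_n}$ for suitable $j_n$, and choosing $j_n$ so that the right side is $\le 2$ forces $n/h^2\le 2$ for all $n$, the desired contradiction. The point is that the arbitrary factor $n^j$ is absorbed by the \emph{standard} $h^j$ in the $\mathcal{L}$-seminorms, not by $h^{\Phi_j}$; scaling $N$ instead (as your phrasing ``$2^{j_k}N_{j_k}$'' suggests) does not produce a function known to lie in $\mathcal{E}_{\{M,\Phi\}}$.

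\emph{Beurling case.} You write ``By Theorem~\ref{PPTasweightmatrixthm}, $\mathcal{E}_{(M,\Phi)}=\mathcal{E}_{(\mathcal{M}_{M,\Phi})}$'', but that theorem \emph{assumes} $\Phi$ is an exponent sequence, which is precisely the conclusion you are after --- this is circular. Moreover, the assertion that composing the two halves of a $B$-equivalence $\mathcal{M}_{M,\Phi}(\approx)\mathcal{L}$ ``does absorb an arbitrary $C^j$'' is not justified: $B$-equivalence only produces a \emph{fixed} geometric factor at each step, and you have no control allowing you to make it arbitrarily large. The paper avoids both issues by working directly with the seminorm comparison guaranteed by Proposition~\ref{funcanaprop} and plugging in the exponentials $f_s(t)=e^{ist}$, which converts the continuity estimate into an inequality between associated weight functions $\omega_M$ and $\omega_{L^{(x_0)}}$; via \eqref{Prop32Komatsu} this gives $h^{j+1}L^{(x)}_j\le M_j$ for all small $x$. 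The companion estimate (again with a free parameter $n\to\infty$) then yields the contradiction exactly as in the Roumieu case.
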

	
	We immediately get the following consequence:
	
	\begin{corollary}
		Let $M\in\RR_{>0}^{\NN}$ and $\Phi\in\RR_{\ge0}^{\NN}$ be given such that the requirements of the particular case in Theorem \ref{Phinecprop1} are valid.  If $\lim_{j \rightarrow \infty} \frac{\Phi_j}{j}=0$, then $\mathcal{E}_{[M,\Phi]}$ \textit{cannot} be identified with any weight matrix class $\mathcal{E}_{[\mathcal{L}]}$ with $\mathcal{L}$ being \hyperlink{Msc}{$(\mathcal{M}_{\on{sc}})$}.
	\end{corollary}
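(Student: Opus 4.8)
The plan is to obtain the Corollary simply as the contrapositive of Theorem \ref{Phinecprop1}. First I would note that the hypothesis $\lim_{j\rightarrow\infty}\Phi_j/j=0$ forces $\liminf_{j\rightarrow\infty}\Phi_j/j=0$, so that $\Phi$ fails to satisfy the defining condition \eqref{Philiminf} of an exponent sequence. This is the only slightly delicate bookkeeping point: a pointwise limit being $0$ is strictly stronger than the mere negation of \eqref{Philiminf}, which is what we actually use.

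Next, I would argue by contradiction. Suppose $\mathcal{E}_{[M,\Phi]}$ can be identified with a weight matrix class $\mathcal{E}_{[\mathcal{L}]}$ for some \hyperlink{Msc}{$(\mathcal{M}_{\on{sc}})$} matrix $\mathcal{L}=\{L^{(x)}:x>0\}$, where ``identified'' is read as equality of the underlying sets (in the Beurling case this is automatically equality of Fr\'echet spaces by the Remark following Proposition \ref{funcanaprop}). The standing assumption that ``the requirements of the particular case in Theorem \ref{Phinecprop1} are valid'' supplies exactly what is needed on $M$: in the Roumieu case that $M^{(c,\Phi)}$ is log-convex and normalized for some $c\ge 1$, and in the Beurling case that $\lim_{j\rightarrow\infty}(M^{(c,\Phi)}_j)^{1/j}=+\infty$ for every $c>0$, with each $M^{(c,\Phi)}$ log-convex and $M^{(c,\Phi)}_0=1$. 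On the other side, since $\mathcal{L}$ is \hyperlink{Msc}{$(\mathcal{M}_{\on{sc}})$}, every $L^{(x)}$ lies in $\mathcal{LC}$ and is therefore normalized and log-convex, so $\mathcal{L}$ meets the hypotheses imposed on it in Theorem \ref{Phinecprop1}. Invoking part $(i)$ in the Roumieu case, respectively part $(ii)$ in the Beurling case, we conclude that $\Phi$ must satisfy \eqref{Philiminf}.

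This contradicts the first step, so no such matrix $\mathcal{L}$ exists; equivalently, $\mathcal{E}_{[M,\Phi]}$ is a genuine non-standard class, not representable as a \hyperlink{Msc}{$(\mathcal{M}_{\on{sc}})$} weight matrix class. I do not anticipate any real obstacle here, since the substantive work is entirely contained in Theorem \ref{Phinecprop1}; the proof of the Corollary is just the observation that $\lim\Phi_j/j=0$ is incompatible with the conclusion of that theorem, together with the remark that the $(\mathcal{M}_{\on{sc}})$ hypothesis on $\mathcal{L}$ is precisely the normalization-and-log-convexity input the theorem requires.
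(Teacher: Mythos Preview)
Your proposal is correct and matches the paper's approach exactly: the paper states the Corollary as an immediate consequence of Theorem~\ref{Phinecprop1} without giving a separate proof, and the contrapositive argument you outline is precisely the intended reasoning.
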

	
	\begin{proof}\textit{(i):}\par
		Choose $c$ such that $M^{(c,\Phi)}$ is log-convex and normalized. By applying the optimal functions $\theta_N$ from \eqref{characterizingfct} to $N\equiv M^{(c,\Phi)}$ we get by the equality of the classes that there exist $h,x_0$ (w.l.o.g. greater than $1$) such that
		\begin{equation}\label{Phinecprop1equ0}
			\forall x \ge x_0~\forall j \in \NN: \quad M_j \le c^{\Phi_j}M_j\le h^{j+1}L^{(x)}_j,
		\end{equation}
		and therefore we get
		\begin{equation}
			\label{Phinecprop1equ}
			\forall x \ge x_0~\forall j \in \NN:\quad \frac{1}{h^2} \le  \left(\frac{L^{(x)}_j}{M_j}\right)^{1/j}.
		\end{equation}
		
		In addition we infer, again by working with optimal functions, but now for the sequences $j \mapsto n^jL^{(n)}_j$, that for all $n \in \NN$ there exists $c_n$ (w.l.o.g. increasing in $n)$ such that
		\begin{equation*}
			\forall\;j \in \NN:\quad n^j L^{(n)}_j \le c_n^{\Phi_j} M_j,
		\end{equation*}
		and by taking roots we end up with
		\begin{equation}
			\label{Phinecprop2equ}
			\forall\;j \in \NN_{>0}:\quad n(L^{(n)}_j)^{1/j} \le c_n^{\Phi_j/j}M_j^{1/j}.
		\end{equation}
		Now let us assume that \eqref{Philiminf} is violated, i.e., that
		\[
		\liminf_{j \rightarrow \infty} \frac{\Phi_j}{j} = 0,
		\]
		then we can find a sequence of integers $j_n$ such that
		\[
		c_n^{\Phi_{j_n}/j_n}\le 2.
		\]
		Combining \eqref{Phinecprop1equ} and \eqref{Phinecprop2equ} we infer (for all $n\in\NN$ with $n\ge x_0$)
		\[
		\frac{n}{h^2} \le n \left(\frac{L^{(n)}_{j_n}}{M_{j_n}}\right)^{1/j_n}\le 2,
		\]
		which yields a contradiction as $n \rightarrow \infty$.
		\par
		\textit{(ii):}\par
		By Proposition \ref{funcanaprop}, we infer that the spaces are isomorphic as Fr\'echet spaces. Thus we get that for any compact set $K \subset\subset \RR$ there exist $h, x_0>0$ and a compact set $J \subset \subset \RR$ such that for all $f \in \mathcal{E}_{(M,\Phi)}(\RR)=\mathcal{E}_{(\mathcal{L})}(\RR)$ we have
		\[
		\|f\|_{M,\Phi,K,1}\le \frac{1}{h}\|f\|_{L^{(x_0)},J,h},
		\]
		which yields, by plugging in the family of functions $f_s(t):=e^{ist}$,
		\[
		\exp(\omega_{M}(s))\le \frac{1}{h}\exp(\omega_{L^{(x_0)}}(s/h)).
		\]
		Due to log-convexity we can apply \eqref{Prop32Komatsu} and get from this estimate (since the sequences of $\mathcal{L}$ are pointwise ordered), that for all $x\leq x_0$ and $j\in\NN$
		\begin{equation*}
			h^{j+1} L^{(x)}_j \le M_j
		\end{equation*}
		and finally, since w.l.o.g. $h\le 1$, we get that for all $x \leq x_0$ and $j\in\NN_{>0}$
		\begin{equation}
			\label{eq:uniboundbelow}
			h^2\le \left( \frac{M_j}{L^{(x)}_j} \right)^{1/j}.
		\end{equation}
		Analogously we argue to get that for all $n \in \NN$ there exists $c_n>0$ such that
		\begin{equation*}
			c_n^{\Phi_j+1}M_j \le \left(\frac{1}{n}\right)^j L^{(1/n)}_j,
		\end{equation*}
		and therefore
		\begin{equation}
			\label{eq:uniboundabove}
			\left(\frac{M_j}{L_j^{(1/n)}}\right)^{1/j}\le \frac{1}{n} \left( \frac{1}{c_n}\right)^{(\Phi_j+1)/j}.
		\end{equation}
		Now again assume that \eqref{Philiminf} is violated, i.e., that
		\[
		\liminf_{j \rightarrow \infty} \frac{\Phi_j}{j} = 0,
		\]
		then we can find a sequence of integers $j_n$ such that
		\[
		\left(\frac{1}{c_n}\right)^{(\Phi_{j_n}+1)/j_n}\le 2.
		\]
		Combining \eqref{eq:uniboundbelow} and \eqref{eq:uniboundabove} we infer
		\[
		h^2\le \frac{2}{n},
		\]
		which again gives the desired contradiction and thus finishes the proof.
	\end{proof}
	
	In particular, if we choose for the matrix $\mathcal{L}$ the concrete matrix $\mathcal{M}_{M,\Phi}$ from \eqref{convenientmatrix}, then we can draw the same conclusion i.e., that $\Phi$ already has to be an exponent sequence. Under somewhat milder conditions, we can actually show even more in this case. This is due to the fact that we can prove the desired implication directly, however by using the same techniques (\eqref{characterizingfct}, Proposition \ref{funcanaprop}) as in the proof of Theorem \ref{Phinecprop1} before.

	\begin{theorem}\label{Phinecprop}
		Let $M\in\RR_{>0}^{\NN}$ and $\Phi\in\RR_{\ge0}^{\NN}$ be given and
		$\mathcal{M}_{M,\Phi}$ be the matrix defined in \eqref{convenientmatrix}. Then we get:
		\begin{itemize}
			\item[$(i)$] The Roumieu case. Assume that $M^{(c,\Phi)}$ is log-convex and normalized for some $c>0$ and that (as sets)
			$$\mathcal{E}_{\{\mathcal{M}_{M,\Phi}\}}\subseteq\mathcal{E}_{\{M,\Phi\}}$$
			is valid. Then $\Phi$ has to satisfy \eqref{Philiminf}. In particular this implication holds for any $M\in\hyperlink{LCset}{\mathcal{LC}}$.
			
			\item[$(ii)$] The Beurling case. Assume that $\lim_{j\rightarrow+\infty}(M^{(c,\Phi)}_j)^{1/j}=+\infty$ for all $c>0$ and such that (w.l.o.g., cf. Remark \ref{log-remark})  each $M^{(c,\Phi)}$ is log-convex and  $M^{(c,\Phi)}_0=1$. Assume also that (as sets and thus automatically as Fr\'echet spaces)
			$$\mathcal{E}_{(M,\Phi)}\subseteq\mathcal{E}_{(\mathcal{M}_{M,\Phi})}$$
			is valid. Then $\Phi$ has to satisfy \eqref{Philiminf}.
		\end{itemize}
	\end{theorem}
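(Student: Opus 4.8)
The plan is to run the machinery of Theorem \ref{Phinecprop1} while exploiting that the sequences of $\mathcal{M}_{M,\Phi}$ have the fully explicit shape $M^{(c,\Phi)}_j=c^{\Phi_j}M_j$; this makes the matrix-internal comparisons redundant and lets one read off a quantitative lower bound for $\Phi_j/j$ from a single test object. We may work on $U=\RR$. \emph{Roumieu case:} fix $c>0$ with $M^{(c,\Phi)}$ normalized and log-convex (for $M\in\hyperlink{LCset}{\mathcal{LC}}$ take $c=1$, so $M^{(1,\Phi)}=M$), and pass to the ``tilted'' sequence $N_j:=2^jM^{(c,\Phi)}_j$, which is again normalized and log-convex and satisfies $N\approx M^{(c,\Phi)}$. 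Feeding $N$ into the optimal-function construction \eqref{characterizingfct} gives $\theta_N\in\mathcal{E}_{\{N\}}(\RR,\CC)=\mathcal{E}_{\{M^{(c,\Phi)}\}}(\RR,\CC)\subseteq\mathcal{E}_{\{\mathcal{M}_{M,\Phi}\}}(\RR,\CC)$ with $|\theta_N^{(j)}(0)|\ge N_j$, so the assumed inclusion puts $\theta_N$ in $\mathcal{E}_{\{M,\Phi\}}(\RR,\CC)$. Evaluating the corresponding seminorm on a compact neighbourhood of $0$ yields $h,C>0$ with $2^jc^{\Phi_j}M_j=N_j\le|\theta_N^{(j)}(0)|\le Ch^{\Phi_j}M_j$, that is $2^j\le C(h/c)^{\Phi_j}$ for all $j\in\NN$; since $\Phi_j\to\infty$ this forces $h>c$, and then $\Phi_j/j\ge(\log 2-\tfrac1j\log C)/\log(h/c)$, hence $\liminf_j\Phi_j/j>0$.

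\emph{Beurling case:} here $\mathcal{E}_{(\mathcal{M}_{M,\Phi})}\subseteq\mathcal{E}_{(M,\Phi)}$ holds unconditionally (the $\Phi$-seminorm $\|\cdot\|_{M,\Phi,K,\tilde h}$ equals the matrix seminorm $\|\cdot\|_{M^{(\tilde h,\Phi)},K,1}$), so the hypothesis is really a set equality; at any rate Proposition \ref{funcanaprop} makes the inclusion $\mathcal{E}_{(M,\Phi)}(\RR)\hookrightarrow\mathcal{E}_{(\mathcal{M}_{M,\Phi})}(\RR)$ continuous, both spaces being Fréchet and continuously embedded in $C(\RR)$. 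Applying this to the continuous seminorm $\|\cdot\|_{M^{(1,\Phi)},K,1/2}=\|\cdot\|_{M,K,1/2}$ of the matrix class yields a compact $J$ and $h,C>0$ with $\|f\|_{M,K,1/2}\le C\|f\|_{M,\Phi,J,h}$ for all $f\in\mathcal{E}_{(M,\Phi)}(\RR)$; inserting $f_s(t):=e^{ist}$ and rewriting both sides via associated weight functions (all sequences in sight being log-convex with first term $1$, w.l.o.g.\ by Remark \ref{log-remark}) this reads $\exp(\omega_M(2s))\le C\exp(\omega_{M^{(h,\Phi)}}(s))$. Now \eqref{Prop32Komatsu}, applicable since $\lim_j(M^{(c,\Phi)}_j)^{1/j}=+\infty$ for $c=1$ and $c=h$, turns this into $h^{\Phi_j}M_j=M^{(h,\Phi)}_j\le C2^{-j}M_j$, i.e.\ $2^jh^{\Phi_j}\le C$ for all $j$; since $\Phi_j\to\infty$ this forces $h<1$ and then $\Phi_j/j\ge(\log 2-\tfrac1j\log C)/\log(1/h)\to\log 2/\log(1/h)>0$.

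\emph{Where the work is:} the only conceptual ingredient is the choice of test object that exposes the clash between the geometric factor and the $\Phi$-factor. The bare optimal function $\theta_{M^{(c,\Phi)}}$, resp.\ the matrix seminorm with parameter $1$, only produces $h\ge c$, resp.\ $h\ge1$, and says nothing about the growth of $\Phi$; the auxiliary factor $2^j$ in $N$ and the sub-unit parameter $1/2$ are precisely what squeeze out the linear bound $\Phi_j\gtrsim j$. Everything else is routine bookkeeping --- verifying that $N$, $M$ and $M^{(h,\Phi)}$ are normalized log-convex sequences with $(M_j)^{1/j}\to\infty$ so that \eqref{characterizingfct} and \eqref{Prop32Komatsu} apply (this is exactly what the stated hypotheses, resp.\ Remark \ref{log-remark}, guarantee), plus the elementary estimates above. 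I expect the Beurling case to be marginally the more delicate of the two, owing to the functional-analytic step and the detour through associated weight functions, while the Roumieu case is essentially immediate once the tilted optimal function is set up.
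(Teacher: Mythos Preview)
Your argument is correct and is precisely the ``direct'' route the paper has in mind: a single tilted test object (the optimal function for $N_j=2^jM^{(c,\Phi)}_j$ in the Roumieu case, respectively the seminorm $\|\cdot\|_{M,K,1/2}$ together with $f_s(t)=e^{ist}$ in the Beurling case) immediately forces $2^j\le C(h/c)^{\Phi_j}$, resp.\ $2^jh^{\Phi_j}\le C$, and hence $\liminf_j\Phi_j/j>0$. One small correction: you invoke ``$\Phi_j\to\infty$'' to conclude $h>c$ (resp.\ $h<1$), but this is not known a priori and is unnecessary---the conclusion already follows from $\Phi_j\ge 0$, since $h\le c$ would give $(h/c)^{\Phi_j}\le 1$ and hence $2^j\le C$.
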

	
	Consequently, \eqref{Philiminf} has to hold when assuming \eqref{PPTasweightmatrixthmequ} if the additional requirements on $M^{(c,\Phi)}$ of the particular case hold true.\vspace{6pt}
	
	If the symmetric restriction from above is imposed on the sequence $\Phi$, i.e., the growth of $h^{\Phi_j}$ is at most geometric, then we recover the classical ultradifferentiable classes
	defined by a single weight sequence.
	
	\begin{proposition}\label{limsupprop}
		Let $M\in\RR_{>0}^{\NN}$ and $\Phi=(\Phi_j)_j\in\RR_{\ge 0}^{\NN}$ be given. If $\Phi$ is an exponent sequence and in addition also
		\begin{equation}\label{Philimsup}
			\limsup_{j \rightarrow\infty} \frac{\Phi_j}{j}<\infty,
		\end{equation}
		then as locally convex vector spaces $\mathcal{E}_{[M,\Phi]}=\mathcal{E}_{[M]}$.
	\end{proposition}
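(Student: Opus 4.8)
The plan is to deduce the statement from Theorem \ref{PPTasweightmatrixthm} by showing that the two-sided control of $\Phi_j/j$ forces the matrix $\mathcal{M}_{M,\Phi}$ from \eqref{convenientmatrix} to be \emph{constant}, after which the colimit/limit in its definition collapses to a single weight sequence.

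First I would record the elementary consequence of having both \eqref{Philiminf} and \eqref{Philimsup}: there exist constants $0<a\le b<\infty$ and an index $j_0\in\NN_{>0}$ with $a\,j\le\Phi_j\le b\,j$ for all $j\ge j_0$. Hence, for a fixed $c>0$ and every $j\ge j_0$, the quantity $c^{\Phi_j/j}$ lies between $\min(c^a,c^b)$ and $\max(c^a,c^b)$, two strictly positive and finite constants. Absorbing the finitely many indices $1\le j<j_0$ into larger, respectively smaller, constants we obtain
\[
0<\inf_{j\ge 1}c^{\Phi_j/j}\le\sup_{j\ge 1}c^{\Phi_j/j}<\infty\qquad(c>0).
\]

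Next, since $M^{(c,\Phi)}_j/M_j=c^{\Phi_j}$, the displayed bounds say precisely that $\sup_{j\ge1}\bigl(M^{(c,\Phi)}_j/M_j\bigr)^{1/j}<\infty$ and $\sup_{j\ge1}\bigl(M_j/M^{(c,\Phi)}_j\bigr)^{1/j}<\infty$, i.e.\ $M^{(c,\Phi)}\hyperlink{approx}{\approx}M$ for every $c>0$. In particular $M^{(c,\Phi)}\hyperlink{approx}{\approx}M^{(d,\Phi)}$ for all $c,d>0$, so $\mathcal{M}_{M,\Phi}$ is a constant weight matrix in the sense of Section \ref{weightmatrixsection}. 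Since $\Phi$ is an exponent sequence by hypothesis, Theorem \ref{PPTasweightmatrixthm} applies and yields $\mathcal{E}_{\{M,\Phi\}}=\mathcal{E}_{\{\mathcal{M}_{M,\Phi}\}}$ and $\mathcal{E}_{(M,\Phi)}=\mathcal{E}_{(\mathcal{M}_{M,\Phi})}$ as locally convex vector spaces. Finally, because $\mathcal{M}_{M,\Phi}$ is constant and contains $M^{(1,\Phi)}=M$, each member space $\mathcal{E}_{[M^{(c,\Phi)}]}(K)$ coincides topologically with $\mathcal{E}_{[M]}(K)$ (equivalent weight sequences define the same class as topological vector spaces, cf.\ the discussion of \hyperlink{approx}{$\approx$}); the Roumieu union, respectively Beurling intersection, of a family of spaces all equal to $\mathcal{E}_{[M]}(K)$ is again $\mathcal{E}_{[M]}(K)$, and passing to the projective limit over $K\subset\subset U$ completes the identification. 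For the Beurling case one may alternatively upgrade the resulting set-theoretic equality to a topological one via Proposition \ref{funcanaprop}.

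I do not expect a genuine obstacle here; the only point needing a little care is the behaviour at small indices, where $\Phi_j/j$ need not yet lie in the asymptotic band $[a,b]$ (and $\Phi_0/0$ is meaningless), which is precisely why the suprema and infima are taken over $j\ge 1$ and finitely many terms are absorbed into the constants.
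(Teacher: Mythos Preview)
Your argument is correct. The paper states Proposition~\ref{limsupprop} without proof, treating it as immediate; your route via Theorem~\ref{PPTasweightmatrixthm} and the constancy of $\mathcal{M}_{M,\Phi}$ is precisely the mechanism the paper hints at in the remark following Theorem~\ref{PPTasweightmatrixthm} (where the case $\Phi_j=j$ is singled out as yielding $M^{(c,\Phi)}\approx M$ for all $c>0$), so you are simply making that hint explicit under the weaker hypothesis that $\Phi_j/j$ is merely bounded above and below rather than constant.

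One could also bypass Theorem~\ref{PPTasweightmatrixthm} entirely and compare the seminorms directly: from $a j\le\Phi_j\le b j$ for $j\ge j_0$ one gets, for $h\ge 1$, that $h^{\Phi_j}$ lies between $(h^a)^j$ and $(h^b)^j$ (and the reverse for $0<h<1$), so each $\|\cdot\|_{M,\Phi,K,h}$ is sandwiched between two classical seminorms $\|\cdot\|_{M,K,h'}$ up to a constant. This gives the same conclusion in one step. Your approach has the advantage of fitting neatly into the paper's narrative, since it exhibits the result as a degenerate instance of the matrix representation.
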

	
	Finally, we can treat the converse statement.
	
	\begin{theorem}\label{limsuppropconv}
		Let $M\in\RR_{>0}^{\NN}$ and $\Phi\in\RR_{\ge0}^{\NN}$ be given. Then we get:
		\begin{itemize}
			\item[$(i)$] The Roumieu case. Assume that $M^{(c,\Phi)}$ is log-convex and normalized for some $c>1$, and let $M\in\hyperlink{LCset}{\mathcal{LC}}$ or even $M$ be only normalized and log-convex. Assume that
			$$\mathcal{E}_{\{M,\Phi\}}(\RR)=\mathcal{E}_{\{M\}}(\RR)$$
			is valid (as sets). Then $\Phi$ has to satisfy both \eqref{Philiminf} and \eqref{Philimsup}.
			
			\item[$(ii)$] The Beurling case. Assume that $\lim_{j\rightarrow+\infty}(M^{(c,\Phi)}_j)^{1/j}=+\infty$ for all $c>0$ and such that (w.l.o.g., cf. Remark \ref{log-remark}) each $M^{(c,\Phi)}$ is log-convex and $M^{(c,\Phi)}_0=1$. Assume also that (as sets/locally convex vector spaces)
			$$\mathcal{E}_{(M,\Phi)}(\RR)=\mathcal{E}_{(M)}(\RR)$$
			is valid. Then $\Phi$ has to satisfy both \eqref{Philiminf} and \eqref{Philimsup}.
		\end{itemize}
		
	\end{theorem}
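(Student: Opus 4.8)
In both parts, condition \eqref{Philiminf} comes essentially for free: since $\mathcal{E}_{[M]}=\mathcal{E}_{[\{M\}]}$ for the constant weight matrix $\{M\}$ --- which consists of normalized log-convex weight sequences in the Roumieu situation, and which is even $(\mathcal{M}_{\on{sc}})$ in the Beurling situation (there the assumption on $M^{(1,\Phi)}=M$ forces $M\in\mathcal{LC}$) --- Theorem \ref{Phinecprop1}, applied with $\mathcal{L}:=\{M\}$, immediately shows that $\Phi$ is an exponent sequence. So from now on I assume \eqref{Philiminf}, hence $\Phi_j\ge\epsilon j$ for $j$ large, and it remains to deduce \eqref{Philimsup}. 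Suppose, towards a contradiction, that $\limsup_{j\to\infty}\Phi_j/j=+\infty$, i.e.\ there is a subsequence $(j_k)_k$ with $\Phi_{j_k}/j_k\to+\infty$.

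\textbf{The Roumieu case.} The hypothesis furnishes a fixed $c>1$ with $N:=M^{(c,\Phi)}$ normalized and log-convex, so the optimal function $\theta_N$ of \eqref{characterizingfct} is at our disposal: $\theta_N\in\mathcal{E}_{\{N\}}(\RR,\CC)$ and $|\theta_N^{(j)}(0)|\ge N_j=c^{\Phi_j}M_j$ for all $j$. I would first check $\theta_N\in\mathcal{E}_{\{M,\Phi\}}(\RR)$: on any compact $K$ one has $|\theta_N^{(j)}(x)|\le C_K H_K^j N_j=C_K H_K^j c^{\Phi_j}M_j$, and since $\Phi_j\ge\epsilon j$ eventually the geometric factor $H_K^j$ is absorbed into $(H_K^{1/\epsilon})^{\Phi_j}$, so (up to adjusting the constant for the finitely many small indices) $|\theta_N^{(j)}(x)|\le C'_K(H_K^{1/\epsilon}c)^{\Phi_j}M_j$ --- exactly a bound of the type \eqref{Phiseminorm}. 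On the other hand $\theta_N\notin\mathcal{E}_{\{M\}}(\RR)$: for any $h>0$ and any compact $K\ni 0$ one has $\|\theta_N\|_{M,K,h}\ge\sup_j|\theta_N^{(j)}(0)|/(h^jM_j)\ge\sup_j(c^{\Phi_j/j}/h)^j$, which is $+\infty$ because $c^{\Phi_{j_k}/j_k}/h\to+\infty$ (here $c>1$ is essential, and is why the hypothesis forbids $c=1$). This contradicts $\mathcal{E}_{\{M,\Phi\}}(\RR)=\mathcal{E}_{\{M\}}(\RR)$.

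\textbf{The Beurling case.} Optimal functions are unavailable here (cf.\ the remark after \eqref{characterizingfct}), so I would use the exponential test functions $f_s(t):=e^{ist}$, $s>0$; since $(M_j)^{1/j}\to\infty$ the associated function $\omega_M$ is finite-valued, whence $f_s\in\mathcal{E}_{(M)}(\RR)$. By Proposition \ref{funcanaprop} and the remark following it, the set-theoretic equality $\mathcal{E}_{(M,\Phi)}(\RR)=\mathcal{E}_{(M)}(\RR)$ is an isomorphism of Fr\'echet spaces, so for a fixed compact $K$ and a fixed $h>0$ the seminorm $\|\cdot\|_{M,\Phi,K,h}$ is continuous on $\mathcal{E}_{(M)}(\RR)$; hence there are a compact $J$, an $h'>0$ and a $C>0$ with $\|f\|_{M,\Phi,K,h}\le C\|f\|_{M,J,h'}$ for all $f\in\mathcal{E}_{(M)}(\RR)$ (one may take $h',h<1$ by monotonicity). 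Evaluating at $f=f_s$ (so $|f_s^{(j)}|\equiv s^j$) gives, for every fixed $j$, that $s^j/\exp(\omega_M(s/h'))\le C h^{\Phi_j}M_j$ for all $s>0$; taking $\sup_{s>0}$ and invoking \eqref{Prop32Komatsu} yields $(h')^jM_j\le C h^{\Phi_j}M_j$, i.e.\ $(h')^j\le C h^{\Phi_j}$ for every $j\in\NN$. Taking logarithms this forces $\Phi_j|\log h|\le\log C+j|\log h'|$, hence $\limsup_j\Phi_j/j\le|\log h'|/|\log h|<\infty$, contradicting our assumption and finishing the proof.

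\textbf{Where the difficulty sits.} Nothing here is deep; the two load-bearing points are (a) recognizing that \eqref{Philiminf} is already handed to us by Theorem \ref{Phinecprop1}, which licenses the bound $\Phi_j\gtrsim j$ used to absorb geometric factors in the Roumieu argument, and (b) the passage, in the Beurling case, from a single seminorm inequality tested on the family $e^{is\cdot}$ to the clean pointwise inequality $(h')^j\le C h^{\Phi_j}$ via the duality \eqref{Prop32Komatsu} between $M$ and $\omega_M$. Choosing the right test objects --- $\theta_{M^{(c,\Phi)}}$ with the prescribed $c>1$ in the Roumieu case and $e^{is\cdot}$ in the Beurling case --- is what makes both arguments short.
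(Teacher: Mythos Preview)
Your argument is correct and is essentially the paper's own proof, which likewise reduces everything to the machinery of Theorem \ref{Phinecprop1} with the constant matrix $\mathcal{L}=\{M\}$: optimal functions $\theta_{M^{(c,\Phi)}}$ with the prescribed $c>1$ for \eqref{Philimsup} in the Roumieu case, and exponential test functions $e^{is\cdot}$ together with \eqref{Prop32Komatsu} in the Beurling case. One cosmetic caveat: your claim that the Beurling hypotheses force $M\in\mathcal{LC}$ is not quite accurate, since $M=M^{(1,\Phi)}$ inherits log-convexity, $M_0=1$ and $(M_j)^{1/j}\to\infty$ but not necessarily $M_1\ge 1$ --- the paper explicitly flags this same point --- though it is harmless because the proof of Theorem \ref{Phinecprop1}$(ii)$ never actually uses that inequality.
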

	
	\begin{proof}
		We follow the proof of Theorem \ref{Phinecprop1} with $M=L^{(x)}$ for any $x>0$. In the Roumieu case the second part in \eqref{Phinecprop1equ0} implies \eqref{Philimsup} for $\Phi$. Then we consider $j\mapsto h^jM_j$ for some arbitrary but fixed $h>1$ instead of $j\mapsto n^jL^{(n)}_j$ and so \eqref{Phinecprop2equ} yields \eqref{Philiminf}.\vspace{6pt}
		
		In the Beurling case note that $M^{(1,\Phi)}=M$ and by the assumption $M$ has all assumptions from the set \hyperlink{LCset}{$\mathcal{LC}$} except $M_0\le M_1$. Then follow again the proof of Theorem \ref{Phinecprop1}: Replace $M$ by $(c^{\Phi_j}M_j)_j$ for some $1>c>0$ and $L^{(x)}=M$ and get with $h<1$ in the first step that \eqref{Philimsup} holds. The argument for obtaining \eqref{Philiminf} follows similarly.
	\end{proof}

	\subsection{More general (matrix) situations}

	For the sake of completeness let us comment on even more general situations compared with the definitions in Section \ref{ultrabeyondsection}. On the one hand, we can consider $\Phi$-ultradifferentiable classes $\mathcal{E}_{[\mathcal{N},\Phi]}$ defined in terms of a given weight matrix $\mathcal{N}=\{N^{(x)}: x>0\}$, i.e., with an additional parameter.
	 For $U\subseteq\RR^d$ non-empty open, these classes are defined in the natural way by
	\begin{equation*}
		\mathcal{E}_{\{\mathcal{N},\Phi\}}(U)=\underset{K\subset\subset U}{\varprojlim}\;\underset{x\in\mathcal{I}}{\varinjlim}\;\underset{h>0}{\varinjlim}\;\mathcal{E}_{N^{(x)},\Phi,h}(K).
	\end{equation*}
	Similarly, we consider for the Beurling case
	\begin{equation*}
		\mathcal{E}_{(\mathcal{N},\Phi)}(U)=\underset{K\subset\subset U}{\varprojlim}\;\underset{x\in\mathcal{I}}{\varprojlim}\;\underset{h>0}{\varprojlim}\;\mathcal{E}_{N^{(x)},\Phi,h}(K).
	\end{equation*}
    Accordingly, we introduce (cf. \eqref{convenientmatrix})
	\begin{equation}\label{convenientmatrixmatrix}
		\mathcal{M}_{\mathcal{N},\Phi}:=\{N^{(c,c,\Phi)}: c>0\},\hspace{15pt}N^{(c,c,\Phi)}_j:=c^{\Phi_j}N^{(c)}_j,\;\;\;j\in\NN.
	\end{equation}
	It is then straight-forward to check that Lemma \ref{lemma.MLCondition} can be transferred to this setting and Theorem \ref{PPTasweightmatrixthm} takes the following form:
	
	\begin{theorem}\label{PPTasweightmatrixthmmatrix}
		Let $\mathcal{N}=\{N^{(x)}: x>0\}$ be given and let $\Phi$ be an exponent sequence. Let
		$\mathcal{M}_{\mathcal{N},\Phi}$ be the matrix defined in \eqref{convenientmatrixmatrix}, then as locally convex vector spaces we get
		\begin{equation}\label{PPTasweightmatrixthmmatrixequ}
			\mathcal{E}_{\{\mathcal{N},\Phi\}}=\mathcal{E}_{\{\mathcal{M}_{\mathcal{N},\Phi}\}},\hspace{15pt}\mathcal{E}_{(\mathcal{N},\Phi)}=\mathcal{E}_{(\mathcal{M}_{\mathcal{N},\Phi})}.
		\end{equation}

In both cases we can replace the symbol (functor) $\mathcal{E}$ by $\mathcal{B}$, $\mathcal{D}$, $\mathcal{A}$ or by $\Lambda$.
	\end{theorem}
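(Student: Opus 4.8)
The plan is to mimic the proof of Theorem \ref{PPTasweightmatrixthm} almost verbatim, the only new ingredient being that the extra index $x$ of the matrix $\mathcal{N}$ and the geometric factor $h^j$ are now handled jointly and absorbed together into the single parameter $c$ of $\mathcal{M}_{\mathcal{N},\Phi}$.

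First I would check that $\mathcal{M}_{\mathcal{N},\Phi}$ is genuinely a weight matrix: for fixed $j$ both $c\mapsto c^{\Phi_j}$ (using $\Phi_j\ge 0$) and $c\mapsto N^{(c)}_j$ (by the ordering axiom for $\mathcal{N}$) are non-decreasing, hence so is $c\mapsto N^{(c,c,\Phi)}_j$. Next I would prove the announced transfer of Lemma \ref{lemma.MLCondition}: that \eqref{Philiminf} for $\Phi$ is equivalent to $\mathcal{M}_{\mathcal{N},\Phi}$ satisfying $(\mathcal{M}_{\{\on{L}\}})$, and equivalent to it satisfying $(\mathcal{M}_{(\on{L})})$. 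The proof copies that of Lemma \ref{lemma.MLCondition}: after bounding $N^{(c)}_j\le N^{(c_1)}_j$ for $c_1\ge c$, everything reduces to the purely scalar estimate $h^jc^{\Phi_j}\le Dc_1^{\Phi_j}$ treated there, with the same quantifier order ($c_1$ depending on both the geometric factor and on $c$).

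With these two preparations in hand the theorem follows exactly as Theorem \ref{PPTasweightmatrixthm}, working at the level of the Banach building blocks $\mathcal{E}_{N^{(x)},\Phi,h}(K)$ and $\mathcal{E}_{N^{(c,c,\Phi)},g}(K)$ and then passing to the $\varprojlim_K$ of the appropriate inductive resp. projective limits. In the Roumieu case the inclusion $\mathcal{E}_{\{\mathcal{N},\Phi\}}\subseteq\mathcal{E}_{\{\mathcal{M}_{\mathcal{N},\Phi}\}}$ is immediate: given $x,h$, take $c\ge\max\{x,h\}$, so that $h^{\Phi_j}N^{(x)}_j\le c^{\Phi_j}N^{(c)}_j=N^{(c,c,\Phi)}_j$; the reverse inclusion uses the transferred $(\mathcal{M}_{\{\on{L}\}})$ to dominate $g^jN^{(c,c,\Phi)}_j$ by some $N^{(c_1,c_1,\Phi)}_j$, which is then just the $N^{(c_1)}$-seminorm carrying $\Phi$-factor $c_1$. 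In the Beurling case the roles are swapped: the inclusion $\mathcal{E}_{(\mathcal{M}_{\mathcal{N},\Phi})}\subseteq\mathcal{E}_{(\mathcal{N},\Phi)}$ is the trivial one (choosing $c\le\min\{x,h\}$ and $g=1$), while $\mathcal{E}_{(\mathcal{N},\Phi)}\subseteq\mathcal{E}_{(\mathcal{M}_{\mathcal{N},\Phi})}$ invokes the transferred $(\mathcal{M}_{(\on{L})})$. Each of these four comparisons is a pointwise inequality between the defining sequences, so all inclusions are continuous and all equalities are equalities of locally convex spaces; and since they involve only the seminorms \eqref{Phiseminorm}, they remain valid verbatim after replacing $\mathcal{E}$ by $\mathcal{B}$, $\mathcal{D}$, $\mathcal{A}$ or $\Lambda$.

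I do not expect a genuine obstacle here: the analytic content is entirely the scalar Lemma \ref{lemma.MLCondition} together with the monotonicity of $\mathcal{N}$. The one mildly delicate point is the bookkeeping with the two parameters — keeping straight, in each of the four inclusions, which direction is trivial and which needs the $\on{L}$-type condition — together with the routine verification that the inclusions are continuous, which reduces to the elementary pointwise sequence inequalities above and the continuous embeddings inside the inductive/projective limits.
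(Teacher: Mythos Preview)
Your proposal is correct and follows exactly the paper's approach: the paper simply states that Lemma \ref{lemma.MLCondition} and Theorem \ref{PPTasweightmatrixthm} transfer straightforwardly to this setting, and you have spelled out the details. One minor caveat: the \emph{full} equivalence of Lemma \ref{lemma.MLCondition} does not transfer in general (if $\mathcal{N}$ itself already satisfies $(\mathcal{M}_{\{\on{L}\}})$ resp.\ $(\mathcal{M}_{(\on{L})})$, then so does $\mathcal{M}_{\mathcal{N},\Phi}$ regardless of $\Phi$), but only the implication from \eqref{Philiminf} to the $\on{L}$-conditions is needed for the theorem, and that direction works exactly as you describe.
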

	
		 These classes will be relevant for the study of PTT-limit classes in Section \ref{PTTnonfixedparametersect}. Theorem \ref{PPTasweightmatrixthmmatrix} and the matrix introduced in \eqref{convenientmatrixmatrix} should be compared with the matrix $\mathcal{M}^{\sigma}$, see \eqref{weightmatrixsigma}; in particular, this result becomes relevant for the equalities in Remark \ref{alternativedefrem}.\vspace{12pt}
	
	On the other hand, take $M\in\RR_{>0}^{\NN}$ and let $\mathcal{F}:=\{\Phi^a: a>0\}$ be a {\itshape family of sequences} $\Phi^a\in\RR_{\ge 0}^{\NN}$ such that
	\begin{equation}\label{expomatrixrequire}
		\forall\;0<a\le b\;\forall\;j\in\NN:\;\;\;\Phi^a_j\log(a)\le\Phi^b_j\log(b).
	\end{equation}
	
	We introduce the following locally convex vector spaces
	$$\mathcal{E}_{\{M,\mathcal{F}\}}(K):=\underset{a>0}{\varinjlim}\;\underset{h>0}{\varinjlim}\;\mathcal{E}_{M,\Phi^a,h}(K),$$
	and
	$$\mathcal{E}_{\{M,\mathcal{F}\}}(U)=\underset{K\subset\subset U}{\varprojlim}\;\underset{a>0}{\varinjlim}\;\underset{h>0}{\varinjlim}\;\mathcal{E}_{M,\Phi^a,h}(K)=\underset{K\subset\subset U}{\varprojlim}\;\mathcal{E}_{\{M,\mathcal{F}\}}(K).$$
	Similarly, we set
	$$\mathcal{E}_{(M,\mathcal{F})}(K):=\underset{a>0}{\varprojlim}\;\underset{h>0}{\varprojlim}\;\mathcal{E}_{M,\Phi^{a},h}(K),$$
	and
	$$\mathcal{E}_{(M,\mathcal{F})}(U)=\underset{K\subset\subset U}{\varprojlim}\;\underset{a>0}{\varprojlim}\;\underset{h>0}{\varprojlim}\;\mathcal{E}_{M,\Phi^{a},h}(K)=\underset{K\subset\subset U}{\varprojlim}\;\mathcal{E}_{(M,\Phi^{a})}(K).$$
	
	Finally, let us introduce the matrix
	\begin{equation}\label{convenientmatrixF}
		\mathcal{M}_{M,\mathcal{F}}:=\{M^{(c,\Phi^c)}: c>0\},\hspace{25pt}M^{(c,\Phi^c)}_j:=c^{\Phi^c_j}M_j,\;\;\;j\in\NN.
	\end{equation}
	
	If $\Phi^a=\Phi$ for all $a>0$, then \eqref{expomatrixrequire} is trivially satisfied (recall that $\Phi_j\ge 0$) and $\mathcal{E}_{[M,\mathcal{F}]}=\mathcal{E}_{[M,\Phi]}$ as locally convex vector spaces.
	
	Theorem \ref{PPTasweightmatrixthm} turns in the following form:
	
	\begin{theorem}\label{PPTasweightmatrixthmF}
		Let $M\in\RR_{>0}^{\NN}$ be given and let $\mathcal{F}:=\{\Phi^a: a>0\}$ be a family of sequences $\Phi^a\in\RR_{\ge 0}^{\NN}$ satisfying \eqref{expomatrixrequire}. Let $\mathcal{M}_{M,\mathcal{F}}$ be the matrix defined in \eqref{convenientmatrixF} and assume that this matrix satisfies
		\hyperlink{R-L}{$(\mathcal{M}_{\{\on{L}\}})$} resp. \hyperlink{B-L}{$(\mathcal{M}_{(\on{L})})$}.
		Then as locally convex vector spaces we get
		\begin{equation}\label{PPTasweightmatrixthmFequ}
			\mathcal{E}_{\{M,\mathcal{F}\}}=\mathcal{E}_{\{\mathcal{M}_{M,\mathcal{F}}\}},\hspace{20pt}\mathcal{E}_{(M,\mathcal{F})}=\mathcal{E}_{(\mathcal{M}_{M,\mathcal{F}})}.
		\end{equation}
		Again, in both cases we can replace the symbol (functor) $\mathcal{E}$ by $\mathcal{B}$, $\mathcal{D}$, $\mathcal{A}$ or by $\Lambda$.
	\end{theorem}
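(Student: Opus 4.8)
The plan is to run the proofs of Lemma~\ref{lemma.MLCondition} and Theorem~\ref{PPTasweightmatrixthm} essentially verbatim, the only new feature being that the single scalar now carries a second label: the family index $a$ on the $\mathcal{F}$--side and the matrix index $c$ on the matrix side; once this is kept track of, everything reduces to the two nested inductive (resp.\ projective) limits and to the standing hypothesis on $\mathcal{M}_{M,\mathcal{F}}$. First I would record that \eqref{expomatrixrequire} is exactly what makes $\mathcal{M}_{M,\mathcal{F}}$ a weight matrix: it says that $c\mapsto c^{\Phi^c_j}M_j=\exp(\Phi^c_j\log c)\,M_j$ is non-decreasing, hence $M^{(c,\Phi^c)}\le M^{(d,\Phi^d)}$ for $c\le d$. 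Next I would unwind the assumed condition: \hyperlink{R-L}{$(\mathcal{M}_{\{\on{L}\}})$} for the matrix \eqref{convenientmatrixF} means that for all $h,c>0$ there are $D\ge 1$ and an index $c_1$ with $h^j M^{(c,\Phi^c)}_j\le D\,M^{(c_1,\Phi^{c_1})}_j$ for all $j$, and \hyperlink{B-L}{$(\mathcal{M}_{(\on{L})})$} is the same statement with the two indices interchanged. These are the exact analogues of the equivalence $(ii)\Leftrightarrow(i)$ of Lemma~\ref{lemma.MLCondition} and now play the role that \eqref{Philiminf} played there; there is nothing further to prove at this step, it is simply what we are allowed to use.

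For the Roumieu identity, the inclusion $\mathcal{E}_{\{\mathcal{M}_{M,\mathcal{F}}\}}\subseteq\mathcal{E}_{\{M,\mathcal{F}\}}$ is the analogue of the second half of the proof of Theorem~\ref{PPTasweightmatrixthm}: given $c,h_1>0$, apply \hyperlink{R-L}{$(\mathcal{M}_{\{\on{L}\}})$} to the index $c$ with geometric factor $h_1$ to obtain $D\ge 1$ and $c_1$ with $h_1^j M^{(c,\Phi^c)}_j\le D\,c_1^{\Phi^{c_1}_j}M_j$ for all $j$; since $c_1^{\Phi^{c_1}_j}M_j$ is precisely the weight occurring in $\|\cdot\|_{M,\Phi^{c_1},K,c_1}$ (i.e.\ the choice $a=h=c_1$), this gives the continuous inclusion $\mathcal{E}_{M^{(c,\Phi^c)},h_1}(K)\hookrightarrow\mathcal{E}_{M,\Phi^{c_1},c_1}(K)$, and passing to the limits (projective in $K$, inductive in the remaining parameters) yields the claim. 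For the reverse inclusion $\mathcal{E}_{\{M,\mathcal{F}\}}\subseteq\mathcal{E}_{\{\mathcal{M}_{M,\mathcal{F}}\}}$ one starts from $f\in\mathcal{E}_{M,\Phi^a,h}(K)$: if $h\le a$ one is done immediately, because $h^{\Phi^a_j}M_j\le a^{\Phi^a_j}M_j=M^{(a,\Phi^a)}_j$, i.e.\ $\mathcal{E}_{M,\Phi^a,h}(K)\hookrightarrow\mathcal{E}_{M^{(a,\Phi^a)},1}(K)\subseteq\mathcal{E}_{\{\mathcal{M}_{M,\mathcal{F}}\}}(K)$; for general $h$ one first uses \eqref{expomatrixrequire} (monotonicity of $c\mapsto M^{(c,\Phi^c)}$) and then the assumed \hyperlink{R-L}{$(\mathcal{M}_{\{\on{L}\}})$} to pass to a large enough index $c$ with $h^{\Phi^a_j}M_j\le D\,c^{\Phi^c_j}M_j$ for all $j$, and concludes as above.

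The Beurling identity is obtained in the same way, with inductive limits replaced by projective ones throughout, with \hyperlink{R-L}{$(\mathcal{M}_{\{\on{L}\}})$} replaced by \hyperlink{B-L}{$(\mathcal{M}_{(\on{L})})$}, and with Proposition~\ref{funcanaprop} used to upgrade every set-theoretic inclusion to a continuous one and every set-theoretic equality to an equality of Fr\'echet spaces. Finally, since all the estimates above are comparisons between the defining weight sequences only, and the Banach building blocks $\mathcal{E}_{N,h}(K)$ are the same regardless of whether one works with $\mathcal{E}$, $\mathcal{B}$, $\mathcal{D}$, $\mathcal{A}$ or $\Lambda$, the identical argument proves the statement for those functors too. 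The step I expect to be the main obstacle is precisely the reverse inclusion in the case $h>a$: here one must reconcile the ``off-diagonal'' weight $h^{\Phi^a_j}M_j$ with the ``diagonal'' sequences $c^{\Phi^c_j}M_j$ of $\mathcal{M}_{M,\mathcal{F}}$, while \eqref{expomatrixrequire} by itself only gives $\Phi^a_j\log a\le\Phi^c_j\log c$ for $c\ge a$; it is at this point that one really needs the strength of \hyperlink{R-L}{$(\mathcal{M}_{\{\on{L}\}})$}/\hyperlink{B-L}{$(\mathcal{M}_{(\on{L})})$} to absorb the residual factor $(h/a)^{\Phi^a_j}$ into an admissible change of index uniformly in $j$, so that the two two-parameter inductive (resp.\ projective) systems are mutually cofinal — organizing this bookkeeping cleanly is the actual content of the proof.
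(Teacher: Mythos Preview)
Your approach mirrors the paper's, which literally writes ``Analogous to Theorem~\ref{PPTasweightmatrixthm}.'' The organization into two inclusions, the use of \hyperlink{R-L}{$(\mathcal{M}_{\{\on{L}\}})$} to get $\mathcal{E}_{\{\mathcal{M}_{M,\mathcal{F}}\}}\subseteq\mathcal{E}_{\{M,\mathcal{F}\}}$ via $h_1^jM^{(c,\Phi^c)}_j\le D\,c_1^{\Phi^{c_1}_j}M_j$, and the diagonal embedding $h=a$ for the easy half of the reverse inclusion are all exactly what ``analogous'' means here.

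The gap is precisely where you locate it, and your sketch does not actually close it. Condition \hyperlink{R-L}{$(\mathcal{M}_{\{\on{L}\}})$} absorbs only \emph{geometric} factors $H^j$ into a change of matrix index, while the residual $(h/a)^{\Phi^a_j}$ can be super-geometric; and \eqref{expomatrixrequire} yields $a^{\Phi^a_j}\le c^{\Phi^c_j}$ for $c\ge a$, not $h^{\Phi^a_j}\le c^{\Phi^c_j}$. Concretely, put $\Phi^a_j=j^2$ for $a\le 1$ and $\Phi^a_j=j^{3/2}$ for $a>1$. Then \eqref{expomatrixrequire} holds and one checks directly that $\mathcal{M}_{M,\mathcal{F}}$ satisfies both \hyperlink{R-L}{$(\mathcal{M}_{\{\on{L}\}})$} and \hyperlink{B-L}{$(\mathcal{M}_{(\on{L})})$}; yet the off-diagonal weight $2^{\Phi^1_j}=2^{j^2}$ is not dominated by any $C(h')^jc^{\Phi^c_j}$, since the latter grows at most like $(h')^jc^{j^{3/2}}$ for $c>1$ and like $(h')^j$ for $c\le 1$. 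So the two two-parameter systems are \emph{not} mutually cofinal under the stated hypotheses alone, and the inclusion $\mathcal{E}_{\{M,\mathcal{F}\}}\subseteq\mathcal{E}_{\{\mathcal{M}_{M,\mathcal{F}}\}}$ (hence the statement for $\Lambda$) fails in this example. The paper's one-line proof does not address this either; what makes the argument go through in the intended applications is the additional pointwise monotonicity $\Phi^a\le\Phi^b$ for $a\le b$ (singled out in part $(b)$ of the Remark that follows), under which $h^{\Phi^a_j}\le h^{\Phi^h_j}=M^{(h,\Phi^h)}_j/M_j$ for $h\ge a$ and the obstacle disappears trivially.
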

	
	\demo{Proof} Analogous to Theorem \ref{PPTasweightmatrixthm}.\qed\enddemo

	Let us characterize now the crucial conditions \hyperlink{R-L}{$(\mathcal{M}_{\{\on{L}\}})$} resp. \hyperlink{B-L}{$(\mathcal{M}_{(\on{L})})$} in terms of a growth condition on $\mathcal{F}$. The next result generalizes Lemma \ref{lemma.MLCondition} to the matrix $\mathcal{M}_{M,\mathcal{F}}$ defined in \eqref{convenientmatrixF}.
	
	\begin{proposition}
		Let $\mathcal{M}_{M,\mathcal{F}}$ be given and assume that $\mathcal{F}=\{\Phi^a: a>0\}$ satisfies \eqref{expomatrixrequire}.
		\begin{itemize}
			\item[$(a)$] The following are equivalent (Roumieu case):
			\begin{itemize}
				\item[$(i)$] $\mathcal{M}_{M,\mathcal{F}}:=\{M^{(c,\Phi^c)}: c>0\}$ satisfies \hyperlink{R-L}{$(\mathcal{M}_{\{\on{L}\}})$}.
				
				\item[$(ii)$] The family $\mathcal{F}$ satisfies
				\begin{equation}\label{familyFabsorbingroum}
					\exists\;\epsilon>0\;\forall\;c>0\;\exists\;d>c:\;\;\;\liminf_{j\rightarrow\infty}\frac{\Phi_j^d}{j}\log(d)-\frac{\Phi_j^c}{j}\log(c)\ge\epsilon.
				\end{equation}
			\end{itemize}
			
			\item[$(b)$] The following are equivalent (Beurling case):
			\begin{itemize}
				\item[$(i)$] $\mathcal{M}_{M,\mathcal{F}}:=\{M^{(c,\Phi^c)}: c>0\}$ satisfies \hyperlink{B-L}{$(\mathcal{M}_{(\on{L})})$}.
				
				\item[$(ii)$] The family $\mathcal{F}$ satisfies
				\begin{equation}\label{familyFabsorbingbeur}
					\exists\;\epsilon>0\;\forall\;c>0\;\exists\;d<c:\;\;\;\liminf_{j\rightarrow\infty}\frac{\Phi_j^c}{j}\log(c)-\frac{\Phi_j^d}{j}\log(d)\ge\epsilon.
				\end{equation}
			\end{itemize}
		\end{itemize}
	\end{proposition}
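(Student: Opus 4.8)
The plan is to reduce both equivalences to the same elementary computation as in the proof of Lemma \ref{lemma.MLCondition}, the only genuinely new ingredient being an iteration that upgrades ``absorbing one fixed geometric factor'' into ``absorbing every geometric factor''. Throughout I would use that \eqref{expomatrixrequire} makes $\mathcal{M}_{M,\mathcal{F}}$ a monotone family: for $0<a\le b$ one has $M^{(a,\Phi^a)}_j=a^{\Phi^a_j}M_j\le b^{\Phi^b_j}M_j=M^{(b,\Phi^b)}_j$ (taking logarithms this is exactly \eqref{expomatrixrequire}). Hence in \hyperlink{R-L}{$(\mathcal{M}_{\{\on{L}\}})$} the index $\beta$ may always be enlarged and in \hyperlink{B-L}{$(\mathcal{M}_{(\on{L})})$} it may always be shrunk, so assuming ``$d>c$'' resp.\ ``$d<c$'' is no loss of generality.

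For part $(a)$: unravelling \hyperlink{R-L}{$(\mathcal{M}_{\{\on{L}\}})$} for $\mathcal{M}_{M,\mathcal{F}}$, cancelling the common factor $M_j>0$, taking logarithms and dividing by $j$, the condition becomes
\begin{equation*}
	\forall\;C>0\;\forall\;c>0\;\exists\;D>0\;\exists\;d>c\;\forall\;j\in\NN_{>0}:\quad \log C\le\tfrac{\log D}{j}+\tfrac{\Phi^d_j}{j}\log d-\tfrac{\Phi^c_j}{j}\log c .
\end{equation*}
For $(i)\Rightarrow(ii)$ I would take $C:=e$ and $c>0$ arbitrary; letting $j\to\infty$ (so that $\tfrac{\log D}{j}\to0$) gives $\liminf_j\bigl(\tfrac{\Phi^d_j}{j}\log d-\tfrac{\Phi^c_j}{j}\log c\bigr)\ge1$, which is \eqref{familyFabsorbingroum} with the uniform value $\epsilon=1$. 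For $(ii)\Rightarrow(i)$ I would iterate: starting from $d_0:=c$, use \eqref{familyFabsorbingroum} (with the fixed $\epsilon>0$) to pick inductively $d_{k+1}>d_k$ satisfying $\liminf_j\bigl(g_{k+1}(j)-g_k(j)\bigr)\ge\epsilon$, where $g_k(j):=\tfrac{\Phi^{d_k}_j}{j}\log d_k$. Superadditivity of $\liminf$ plus telescoping yield $\liminf_j\bigl(g_n(j)-g_0(j)\bigr)\ge n\epsilon$ for every $n$; choosing $n$ with $n\epsilon>\log C$ provides a $j_0$ with $g_n(j)-g_0(j)\ge\log C$ for $j\ge j_0$, i.e.\ $C^jc^{\Phi^c_j}\le d_n^{\Phi^{d_n}_j}$ there, and enlarging the constant to $D:=\max\{1,\max_{0\le j<j_0}C^jc^{\Phi^c_j}d_n^{-\Phi^{d_n}_j}\}$ absorbs the finitely many remaining $j$. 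This gives \hyperlink{R-L}{$(\mathcal{M}_{\{\on{L}\}})$} with $\beta=d_n$.

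Part $(b)$ is the mirror image. After the same manipulation, \hyperlink{B-L}{$(\mathcal{M}_{(\on{L})})$} for $\mathcal{M}_{M,\mathcal{F}}$ becomes $\log C\le\tfrac{\log D}{j}+\tfrac{\Phi^c_j}{j}\log c-\tfrac{\Phi^d_j}{j}\log d$ with $d<c$; the implication $(i)\Rightarrow(ii)$ follows again with $C=e$, and for $(ii)\Rightarrow(i)$ one iterates \eqref{familyFabsorbingbeur} downward, $d_0:=c$, picking $d_{k+1}<d_k$ with $\liminf_j\bigl(g_k(j)-g_{k+1}(j)\bigr)\ge\epsilon$ for $g_k(j):=\tfrac{\Phi^{d_k}_j}{j}\log d_k$, so that $\liminf_j\bigl(g_0(j)-g_n(j)\bigr)\ge n\epsilon$, and then concludes exactly as in $(a)$ with $\beta=d_n$.

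The step I expect to need the most care is this iteration in the direction $(ii)\Rightarrow(i)$: a single use of \eqref{familyFabsorbingroum} (or \eqref{familyFabsorbingbeur}) only absorbs the fixed factor $e^{\epsilon}$, and --- unlike in Lemma \ref{lemma.MLCondition}, where $\Phi$ did not depend on the index and one could rescale in a single step --- here passing to a larger (resp.\ smaller) index simultaneously changes $\Phi^d$, so arbitrarily large geometric factors can only be reached by composing finitely many steps. It is exactly the uniformity of $\epsilon$ over all $c>0$ (the outermost quantifier in \eqref{familyFabsorbingroum} and \eqref{familyFabsorbingbeur}) that makes the telescoped bound grow linearly, like $n\epsilon$, and hence lets the iteration terminate after finitely many steps.
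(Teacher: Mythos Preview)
Your proposal is correct and follows essentially the same route as the paper's proof: unravel $(\mathcal{M}_{[\on{L}]})$ by cancelling $M_j$ and taking logarithms, deduce $(ii)$ from $(i)$ by fixing one geometric factor and letting $j\to\infty$, and obtain $(i)$ from $(ii)$ by iterating the $\epsilon$-gain finitely many times (the paper phrases the telescoping via a threshold $j_{c_i}$ for each step rather than via superadditivity of $\liminf$, but this is cosmetic). Your closing remark that the uniformity of $\epsilon$ over all $c$ is precisely what makes the iteration terminate is exactly the point the paper highlights as well.
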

	
	\demo{Proof}
	
	$(a)(i)\Rightarrow(ii)$ By assumption we have (recall $M^{(c,\Phi^c)}_j:=c^{\Phi^c_j}M_j$):
	$$\forall\;h>0\;\forall\;c>0\;\exists\;d>0\;\exists\;D\ge 1\;\forall\;j\in\NN:\;\;\;h^jc^{\Phi^c_j}M_j\le Dd^{\Phi^d_j}M_j.$$
	Fix now $h>1$ and by \eqref{expomatrixrequire} we can assume that $d\ge c$. Hence
	$$\forall\;c>0\;\exists\;d>c\;\exists\;D\ge 1\;\forall\;j\in\NN_{>0}:\;\;\;\log(h)-\frac{\log(D)}{j}\le\frac{\Phi_j^d}{j}\log(d)-\frac{\Phi_j^c}{j}\log(c),$$
	and so \eqref{familyFabsorbingroum} is verified with (e.g.) $\epsilon:=\log(h)/2$.
	
	$(a)(ii)\Rightarrow(i)$ \eqref{familyFabsorbingroum} implies
	$$\exists\;\epsilon>0\;\forall\;c>0\;\exists\;d>c\;\exists\;j_c\in\NN\;\forall\;j\ge j_c:\;\;\;\frac{\Phi_j^d}{j}\log(d)-\frac{\Phi_j^c}{j}\log(c)\ge\frac{\epsilon}{2}\Leftrightarrow e^{\epsilon j/2}c^{\Phi^c_j}M_j\le d^{\Phi^d_j}M_j.$$
	Then let $h>1$ be given (large) and iterate the previous estimate $n$-times, with $n\in\NN_{>0}$ chosen minimal such that $e^{n\epsilon/2}\ge h$. This then yields choices $d=c_{n+1}>c_n>\dots > c_1=c$ (since by assumption the value of $\epsilon$ is not depending on the choice for $c_i$) such that
	$$h^jc^{\Phi^c_j}M_j\le e^{n\epsilon j/2}c^{\Phi^c_j}M_j\le d^{\Phi^{d}_j}M_j$$
	for all $j\ge\max\{j_{c_i}: 1\le i\le n\}$. Finally, when choosing $D\ge 1$ sufficiently large, we ensure
	$$h^jc^{\Phi^c_j}M_j\le D d^{\Phi^{d}_j}M_j$$
	for all $j\in\NN$ and $D$ is only depending on the number of iterations $n$, i.e., on given $h$, and on the given index $c$. Thus \hyperlink{R-L}{$(\mathcal{M}_{\{\on{L}\}})$} is verified.\vspace{6pt}
	
	The equivalence for the Beurling case is analogous.
	\qed\enddemo
	
	\begin{remark}
		We comment on some special cases:
		\begin{itemize}
			\item[$(a)$] The constant case: If $\Phi^a=\Phi$ for all $a>0$ and if $\Phi$ is an exponent sequence, i.e., \eqref{Philiminf} is valid, then both \eqref{familyFabsorbingroum} and \eqref{familyFabsorbingbeur} hold true: For given $c>0$ e.g. we choose $d=2c$ in \eqref{familyFabsorbingroum} resp. $d=c/2$ in \eqref{familyFabsorbingbeur} and get both requirements with $\epsilon:=\log(2)\liminf_{j\rightarrow\infty}\frac{\Phi_j}{j}$.
			
			\item[$(b)$] Assume that for all $c,d>0$ with $c\leq d$ we have that $\Phi^c \leq \Phi^d$, which implies \eqref{expomatrixrequire}.
			
			\begin{itemize}
				\item[$(*)$] Assume that there exists some $c_0>0$ such that $\Phi^{c_0}$ satisfies \eqref{Philiminf} with value $\epsilon_0>0$. So each $\Phi^d$, $d\ge c_0$, satisfies \eqref{Philiminf} with $\liminf_{j\rightarrow\infty}\frac{\Phi^d_j}{j}\ge \epsilon_0$. Then, arguing as in the constant case before, we get \eqref{familyFabsorbingroum} with $\epsilon:=\log(2)\epsilon_0$ for all choices $c\ge c_0$. Note that in the Roumieu case we can omit all $c<c_0$ without changing the corresponding function class.
				
				\item[$(*)$] If for all $c>0$ we have that $\Phi^c$ satisfies \eqref{Philiminf} {\itshape uniformly} in $c$, i.e.,
				$$\exists\;\epsilon_1>0\;\forall\;c>0:\;\;\;\liminf_{j\rightarrow\infty}\frac{\Phi^c_j}{j}\ge\epsilon_1,$$
				then \eqref{familyFabsorbingbeur} holds true with $\epsilon:=\log(2)\epsilon_1$.
			\end{itemize}
		\end{itemize}
	\end{remark}

	
		\section{PTT-classes as spaces defined by weight matrices}\label{PTTsection}

	Let the parameters $\tau>0$ and $\sigma>1$ be given but from now on fixed and consider (with the convention $0^0:=1$)
	\begin{equation}\label{PPtsequences}
		M_j=M^{\tau,\sigma}_j:=j^{\tau j^{\sigma}},\hspace{15pt}\Phi_j=j^{\sigma} \qquad \text{for all} \quad  j\in\NN.
	\end{equation}
	 For these particular choices of $M$ and $\Phi$ we write $M^{(c,\tau,\sigma)}$ for $M^{(c,\Phi)}$. Thus
	the sequences and the matrix introduced in \eqref{convenientmatrix} have the form
	\begin{equation}\label{PPTconvenientmatrix}
		M^{(c,\tau,\sigma)}_j:=M^{(c,\Phi)}_j= c^{j^{\sigma}}j^{\tau j^{\sigma}},\;\;\;c>0,\;j\in\NN, \qquad \mathcal{M}^{\tau,\sigma} := \mathcal{M}_{M,\Phi}=\{ M^{(c,\tau,\sigma)} \,\,\,:\;\;\;c>0 \}.
	\end{equation}
	
	\subsection{Properties of the matrix $\mathcal{M}^{\tau,\sigma}$}\label{PTTfixedparameter}
	
	Note that, in particular, Theorem \ref{PPTasweightmatrixthm} applies to this special situation. We thus have as a corollary, in accordance with the notation in the works of S. Pilipovi\'{c}, N. Teofanov, and F. Tomi\'{c}, the following statement.

	\begin{proposition}\label{PPTasweightmatrixproposition}		Let $U \subseteq \RR^d$ be open, $\tau>0$, and $\sigma >1$. Then (as locally convex vector spaces)
		\begin{equation}
			\label{eq:pttdef}
			\mathcal{E}_{\{\tau,\sigma\}}(U)=\mathcal{E}_{\{\mathcal{M}^{\tau,\sigma}\}}(U),\qquad \mathcal{E}_{(\tau,\sigma)}(U)= \mathcal{E}_{(\mathcal{M}^{\tau,\sigma})}(U).
		\end{equation}
	\end{proposition}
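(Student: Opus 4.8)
The plan is to simply recognize Proposition~\ref{PPTasweightmatrixproposition} as a direct instance of Theorem~\ref{PPTasweightmatrixthm}, once we check that the data $(M,\Phi)$ from \eqref{PPtsequences} fall under its hypotheses and that the $\Phi$-seminorms $\|\cdot\|_{M,\Phi,K,h}$ of Section~\ref{ultrabeyondsection} coincide with the seminorms $\|\cdot\|_{\mathcal{E}_{\tau,\sigma,h}(K)}$ used to define the PTT-spaces in Section~\ref{sec:ptt1}. The first step is to observe that with $M=M^{\tau,\sigma}$, $M_j=j^{\tau j^\sigma}$, and $\Phi_j=j^\sigma$ (using $0^0:=1$), the defining seminorm of $\mathcal{E}_{\tau,\sigma,h}(K)$, namely
$$
\|\phi\|_{\mathcal{E}_{\tau,\sigma,h}(K)}=\sup_{\alpha\in\NN^d,x\in K}\frac{|\phi^{(\alpha)}(x)|}{h^{|\alpha|^\sigma}M^{\tau,\sigma}_{|\alpha|}},
$$
is literally $\|\phi\|_{M,\Phi,K,h}$ in the sense of \eqref{Phiseminorm}, because $h^{|\alpha|^\sigma}=h^{\Phi_{|\alpha|}}$. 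Hence $\mathcal{E}_{\tau,\sigma,h}(K)=\mathcal{E}_{M,\Phi,h}(K)$ as Banach spaces, and the projective/inductive limit descriptions given for $\mathcal{E}_{[\tau,\sigma]}(U)$ in Section~\ref{sec:ptt1} and for $\mathcal{E}_{[M,\Phi]}(U)$ in Section~\ref{ultrabeyondsection} are the same locally convex space.

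The second step is to verify that $\Phi_j=j^\sigma$ with $\sigma>1$ is an exponent sequence, i.e.\ satisfies \eqref{Philiminf}. This is immediate: $\Phi_j/j=j^{\sigma-1}\to\infty$ as $j\to\infty$ since $\sigma>1$, so in particular $\liminf_{j\to\infty}\Phi_j/j>0$. (Equivalently, one may simply invoke Example~\ref{PPTexamples}$(b)$ together with the remark following Theorem~\ref{PPTasweightmatrixthm}, where it is noted that both cases of that example satisfy \eqref{Philiminf}.)

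With these two observations in place, Theorem~\ref{PPTasweightmatrixthm} applies verbatim to $M=M^{\tau,\sigma}$ and $\Phi=(j^\sigma)_j$, yielding $\mathcal{E}_{\{M,\Phi\}}=\mathcal{E}_{\{\mathcal{M}_{M,\Phi}\}}$ and $\mathcal{E}_{(M,\Phi)}=\mathcal{E}_{(\mathcal{M}_{M,\Phi})}$ as locally convex vector spaces. Finally one identifies the matrix: by \eqref{PPTconvenientmatrix} the matrix $\mathcal{M}_{M,\Phi}$ is exactly $\mathcal{M}^{\tau,\sigma}=\{M^{(c,\tau,\sigma)}:c>0\}$ with $M^{(c,\tau,\sigma)}_j=c^{j^\sigma}j^{\tau j^\sigma}$, and, combining with the identification of spaces from the first step, this gives precisely \eqref{eq:pttdef}. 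There is essentially no obstacle here; the only points that require a word are the bookkeeping of the convention $0^0=1$ (so that $M^{\tau,\sigma}_0=1$, $\Phi_0=0$, consistent with Example~\ref{PPTexamples}$(b)$) and the matching of the two limit presentations, both of which are routine.
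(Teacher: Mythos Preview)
Your proposal is correct and follows exactly the paper's approach: the paper simply states this proposition as an immediate corollary of Theorem~\ref{PPTasweightmatrixthm} applied to the special choices \eqref{PPtsequences}, noting (just after that theorem) that both cases of Example~\ref{PPTexamples} satisfy \eqref{Philiminf}. Your additional bookkeeping about matching seminorms and the convention $0^0=1$ is routine and in line with what the paper leaves implicit.
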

	
	Therefore, we may apply certain results available in the weight matrix setting to PTT-classes. First we need to study the properties of the defining weight matrix $\mathcal{M}^{\tau,\sigma}$.
	
	\begin{theorem}\label{fixedparammainresult}
		Let $\tau>0$, and $\sigma >1$ be fixed. Then we have:
		\begin{itemize}
			\item[$(i)$] $\mathcal{M}^{\tau,\sigma}$ satisfies \hyperlink{B-Comega}{$(\mathcal{M}_{(\on{C}^{\omega})})$}; in fact we even have $\left(M^{(c,\tau,\sigma)}_j/j!^{\alpha}\right)^{1/j}\rightarrow+\infty$ as $j\rightarrow+\infty$ for any $\alpha>0$ and any $c>0$.  Consequently, $\mathcal{M}^{\tau,\sigma}$ also satisfies  \hyperlink{holom}{$(\mathcal{M}_{\mathcal{H}})$} and \hyperlink{R-Comega}{$(\mathcal{M}_{\{\on{C}^{\omega}\}})$}.
			
			\item[$(ii)$] There exists a matrix $\widetilde{\mathcal{M}}^{\tau,\sigma}$ which is equivalent to $\mathcal{M}^{\tau,\sigma}$ and such that $\widetilde{\mathcal{M}}^{\tau,\sigma}$ consists only of sequences that are strongly log-convex (and normalized).
			
			\item[$(iii)$] $\mathcal{M}^{\tau,\sigma}$ has both \hyperlink{R-dc}{$(\mathcal{M}_{\{\on{dc}\}})$} and \hyperlink{B-dc}{$(\mathcal{M}_{(\on{dc})})$}.

			\item[$(iv)$] $\mathcal{M}^{\tau,\sigma}$ has both \hyperlink{R-rai}{$(\mathcal{M}_{\{\on{rai}\}})$} and \hyperlink{B-rai}{$(\mathcal{M}_{(\on{rai})})$}; in fact in both conditions we can choose the same index $\alpha=\beta$.
			
			\item[$(v)$] $\mathcal{M}^{\tau,\sigma}$ has both \hyperlink{R-FdB}{$(\mathcal{M}_{\{\on{FdB}\}})$} and \hyperlink{B-FdB}{$(\mathcal{M}_{(\on{FdB})})$}.
			
			\item[$(vi)$] For each $c>0$ the sequence $M^{(c,\tau,\sigma)}$
			is strongly non-quasianalytic, in fact we even have that $\gamma(M^{(c,\tau,\sigma)})=+\infty$. For the precise definition, properties and meanings of the growth index $\gamma(M)$ introduced in \cite[Sect. 1.3]{Thilliezdivision} we refer to \cite[Sect. 3]{index}.
			
			\item[$(vii)$] $\mathcal{M}^{\tau,\sigma}$ neither has \hyperlink{R-mg}{$(\mathcal{M}_{\{\on{mg}\}})$} nor \hyperlink{B-mg}{$(\mathcal{M}_{(\on{mg})})$}.
			
			\item[$(viii)$] The sequences $M^{(c,\tau,\sigma)}$ are pairwise non-equivalent. More precisely, we have $M^{(c_1,\tau,\sigma)}\hyperlink{triangle}{\vartriangleleft}M^{(c_2,\tau,\sigma)}$ for all $0<c_1<c_2$.
		\end{itemize}
	\end{theorem}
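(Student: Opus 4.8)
\emph{Plan.} I would prove the eight items largely independently; the common thread is that, after writing $\log M^{(c,\tau,\sigma)}_j=j^{\sigma}\log c+\tau j^{\sigma}\log j$, the principal factor $j\mapsto j^{\tau j^{\sigma}}=\exp(\tau j^{\sigma}\log j)$ dominates every geometric correction $c^{j^{\sigma}}$ and every factorial weight $j!^{\alpha}$, because for $\sigma>1$ the quantity $j^{\sigma-1}\log j$ tends to $+\infty$ faster than any power of $j$. Three parts follow at once. For $(i)$ I would compute $(M^{(c,\tau,\sigma)}_j/j!^{\alpha})^{1/j}=\exp\bigl(j^{\sigma-1}(\log c+\tau\log j)-\tfrac{\alpha}{j}\log(j!)\bigr)$, and since $\tfrac1j\log(j!)\sim\log j=o(j^{\sigma-1}\log j)$ this tends to $+\infty$, which yields \hyperlink{B-Comega}{$(\mathcal{M}_{(\on{C}^{\omega})})$} and a fortiori \hyperlink{holom}{$(\mathcal{M}_{\mathcal{H}})$} and \hyperlink{R-Comega}{$(\mathcal{M}_{\{\on{C}^{\omega}\}})$}. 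For $(viii)$, $(M^{(c_1,\tau,\sigma)}_j/M^{(c_2,\tau,\sigma)}_j)^{1/j}=(c_1/c_2)^{j^{\sigma-1}}\to0$ whenever $c_1<c_2$, which is even stronger than pairwise non-equivalence. For $(vii)$ I would disprove \hyperlink{R-mg}{$(\mathcal{M}_{\{\on{mg}\}})$} (and, interchanging the indices, \hyperlink{B-mg}{$(\mathcal{M}_{(\on{mg})})$}) by testing the candidate inequality at $j=k$: the ratio $M^{(\alpha)}_{2j}/(M^{(\beta)}_j)^{2}$ carries the factor $j^{\tau(2^{\sigma}-2)j^{\sigma}}$ with $2^{\sigma}-2>0$, hence grows like $\exp(\on{const}\cdot j^{\sigma}\log j)$ and cannot be controlled by any $C^{2j+1}$.

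\emph{Log-convexity, $(ii)$.} Since $m^{(c,\tau,\sigma)}_j=M^{(c,\tau,\sigma)}_j/j!=c^{j^{\sigma}}m_j$, the sequence $m^{(c,\tau,\sigma)}$ is obtained from the base sequence $m:=(j^{\tau j^{\sigma}}/j!)_j$ by the construction of \eqref{convenientmatrix} with the (clearly convex) exponent $\Phi_j=j^{\sigma}$; so by Proposition \ref{Phigrowthprop}$(ii)$ it suffices to show that $m$ itself is log-convex, i.e.\ that its discrete second difference
\[
\tau\bigl[(j+1)^{\sigma}\log(j+1)-2j^{\sigma}\log j+(j-1)^{\sigma}\log(j-1)\bigr]-\log(1+1/j)
\]
is nonnegative. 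The bracket is comparable to $(x^{\sigma}\log x)''=x^{\sigma-2}(\sigma(\sigma-1)\log x+2\sigma-1)$ at $x\asymp j$, so the second difference is of order $\tau j^{\sigma-2}\log j-1/j$ and is positive for all large $j$. Replacing $m$ by an equivalent globally log-convex sequence (altering finitely many terms) and then invoking the regularization of Remark \ref{log-remark} for the parameters $c<1$ produces a matrix $\widetilde{\mathcal{M}}^{\tau,\sigma}$ that is equivalent to $\mathcal{M}^{\tau,\sigma}$ in both the Roumieu and Beurling senses and consists solely of normalized strongly log-convex sequences.

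\emph{The index-change properties, $(iii)$--$(vi)$.} For $(iv)$ I would argue that for each fixed $c$ the sequence $j\mapsto(m^{(c,\tau,\sigma)}_j)^{1/j}=\exp\bigl(j^{\sigma-1}(\log c+\tau\log j)-\tfrac1j\log(j!)\bigr)$ is increasing for large $j$ (its smooth model has positive derivative there, the $j^{\sigma-2}\log j$ term beating $1/j$), hence almost increasing; that is precisely \hyperlink{R-rai}{$(\mathcal{M}_{\{\on{rai}\}})$} and \hyperlink{B-rai}{$(\mathcal{M}_{(\on{rai})})$} with the common choice $\alpha=\beta$. For $(iii)$ I would estimate, using $(j+1)^{\sigma}-j^{\sigma}\le\sigma(j+1)^{\sigma-1}$ and $(1+1/j)^{\tau j^{\sigma}}\le e^{\tau j^{\sigma-1}}$,
\[
\frac{M^{(c,\tau,\sigma)}_{j+1}}{M^{(d,\tau,\sigma)}_{j}}\le\frac{c^{(j+1)^{\sigma}}}{d^{j^{\sigma}}}\exp\bigl(\tau\sigma(j+1)^{\sigma-1}\log(j+1)+\tau j^{\sigma-1}\bigr),
\]
and then choose $d$ large enough (for \hyperlink{R-dc}{$(\mathcal{M}_{\{\on{dc}\}})$}) resp.\ small enough (for \hyperlink{B-dc}{$(\mathcal{M}_{(\on{dc})})$}) that the prefactor is $\le e^{-j^{\sigma}}$ for $j\ge1$, which swallows the $O(j^{\sigma-1}\log j)$ exponent; the finitely many remaining indices are absorbed into $C^{j+1}$. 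For $(v)$, having reduced by $(ii)$ to a \hyperlink{Msc}{$(\mathcal{M}_{\on{sc}})$} matrix, a short Schur-convexity computation shows $(m^{(c,\tau,\sigma)})^{\circ}\approx m^{(c,\tau,\sigma)}$ (the Fa\`a di Bruno maximum is attained at the ``one large block'' partition and changes $m^{(c,\tau,\sigma)}$ only by a geometric factor), giving \hyperlink{R-FdB}{$(\mathcal{M}_{\{\on{FdB}\}})$} and \hyperlink{B-FdB}{$(\mathcal{M}_{(\on{FdB})})$}; alternatively one invokes the sufficient criteria for these conditions from \cite{compositionpaper} together with $(iii)$ and $(iv)$. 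For $(vi)$, from $\log\mu^{(c,\tau,\sigma)}_j\sim\tau\sigma j^{\sigma-1}\log j$ one sees that $\mu^{(c,\tau,\sigma)}_{Qj}/\mu^{(c,\tau,\sigma)}_j\to\infty$ for every integer $Q$, so \hyperlink{beta1}{$(\beta_1)$}, hence \hyperlink{gamma1}{$(\gamma_1)$}, holds --- i.e.\ strong non-quasianalyticity --- and since moreover $j^{-\gamma}\mu^{(c,\tau,\sigma)}_j$ is eventually increasing for every $\gamma>0$, the index satisfies $\gamma(M^{(c,\tau,\sigma)})=+\infty$ by the characterizations in \cite{index}.

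\emph{Main obstacle.} The genuinely delicate step is $(ii)$ and the items resting on it: one must pass from convexity of the smooth model $x\mapsto x^{\sigma}\log x$ to actual discrete log-convexity of $m=(j^{\tau j^{\sigma}}/j!)_j$ beyond an explicit index, and then perform the Remark \ref{log-remark} regularization carefully enough that the modified matrix is simultaneously $R$- and $B$-equivalent to $\mathcal{M}^{\tau,\sigma}$ and consists of normalized strongly log-convex sequences for \emph{every} $c>0$ --- in particular for the small parameters $c<1$, where $\mu^{(c,\tau,\sigma)}$ is only eventually monotone. Everything else reduces to logarithmic bookkeeping driven by the dominance of $j^{\tau j^{\sigma}}$.
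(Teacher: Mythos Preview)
Your proposal is correct and follows essentially the same route as the paper: direct logarithmic estimates for $(i)$, $(vii)$, $(viii)$; eventual log-convexity of $m^{(c,\tau,\sigma)}$ together with the finite-index modification of Remark~\ref{log-remark} for $(ii)$; eventual monotonicity of $j\mapsto(m^{(c,\tau,\sigma)}_j)^{1/j}$ for $(iv)$ with the same index $\alpha=\beta$; $(v)$ deduced from $(i)$, $(iii)$, $(iv)$ via a sufficient criterion (the paper cites \cite[Lemma~1~(1)]{characterizationstabilitypaper} rather than \cite{compositionpaper}, and does not use your Schur-convexity alternative); and $\gamma(M^{(c,\tau,\sigma)})=+\infty$ via eventual monotonicity of $\mu^{(c,\tau,\sigma)}_j/j^{\alpha}$ for $(vi)$. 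One slip to fix in $(iii)$: in the Beurling condition $(\mathcal{M}_{(\on{dc})})$ the \emph{given} index sits in the denominator (your $d$) and the \emph{chosen} one in the numerator (your $c$), so you must say ``choose $c$ small enough'', not $d$; with that correction your prefactor argument matches the paper's choice $c=c_1/2$ (resp.\ $c_1=(2c)^{2^{\sigma}}$ in the Roumieu case).
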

	
	\demo{Proof}
	$(i)$ For all $j\ge 1$ we see that
	$$\left(m^{(c,\tau,\sigma)}_j\right)^{1/j}=\left(\frac{M^{(c,\tau,\sigma)}_j}{j!}\right)^{1/j}\ge\left(\frac{M^{(c,\tau,\sigma)}_j}{j^j}\right)^{1/j}=\left(c^{j^{\sigma}}j^{\tau j^{\sigma}-j}\right)^{1/j}=c^{j^{\sigma-1}}j^{\tau j^{\sigma-1}-1},$$
	and so $\left(m^{(c,\tau,\sigma)}_j\right)^{1/j}\rightarrow+\infty$ as $j\rightarrow\infty$ for all $c>0$. This also implies $\mu^{(c,\tau,\sigma)}_j/j\rightarrow+\infty$ for all $c>0$; see \eqref{mucompare}. More generally, for any $\alpha>0$ and any $c>0$ it is immediate by the same estimate above that $\left(M^{(c,\tau,\sigma)}_j/j!^{\alpha}\right)^{1/j}\rightarrow+\infty$.\vspace{6pt}
	
	$(ii)$ Obviously, by the convention $0^0:=1$, we get $1=M^{(c,\tau,\sigma)}_0$ and $c=M^{(c,\tau,\sigma)}_1$ for all $c>0$. Thus for each $c\ge 1$ the sequence $M^{(c,\tau,\sigma)}$ is log-convex because $M$ is log-convex and $j \mapsto j^\sigma$ is convex. Actually $M^{(c,\tau,\sigma)}\in\hyperlink{LCset}{\mathcal{LC}}$ for each $c \ge 1$.
	
	Let us verify that the quotients of each sequence $m^{(c,\tau,\sigma)}$ are non-decreasing from some index $j_c$ on. This then implies the statement, since each $M^{(c,\tau,\sigma)}$ can then be replaced by an equivalent sequence $\widetilde{M}^{(c,\tau,\sigma)}$ which is even strongly log-convex and normalized:\par
	By $(i)$ we have $\mu^{(c,\tau,\sigma)}_j/j\rightarrow+\infty$ and so $\mu^{(c,\tau,\sigma)}_j/j\ge 1$ for all $j\ge j'_c$. Then take $j''_c:=\max\{j_c,j'_c\}$ and set
	$$\frac{\widetilde{\mu}^{(c,\tau,\sigma)}_j}{j}:=1,\;\;\;1\le j<j''_c,\hspace{15pt}\frac{\widetilde{\mu}^{(c,\tau,\sigma)}_j}{j}:=\frac{\mu^{(c,\tau,\sigma)}_j}{j},\;\;\;j\ge j''_c.$$
	Since $c\mapsto j''_c$ is non-decreasing (by the order of $\mu^{(c,\tau,\sigma)}$) we have that $\widetilde{m}^{(c,\tau,\sigma)}$ are ordered (even w.r.t. their quotient sequences). Moreover, $M^{(c,\tau,\sigma)}\hyperlink{approx}{\approx}\widetilde{M}^{(c,\tau,\sigma)}$ for each $c>0$ (even on the level of the corresponding quotient sequences).\vspace{6pt}
	
	So let us show that the quotients of $m^{(c,\tau,\sigma)}$ are eventually non-decreasing for any fixed $c>0$:\par
	$j\mapsto\frac{m^{(c,\tau,\sigma)}_j}{m^{(c,\tau,\sigma)}_{j-1}}$ is non-decreasing if and only if $j\mapsto\log\left(\frac{m^{(c,\tau,\sigma)}_j}{m^{(c,\tau,\sigma)}_{j-1}}\right)$ is so. For all $j\ge 1$ we get
	$$\frac{m^{(c,\tau,\sigma)}_j}{m^{(c,\tau,\sigma)}_{j-1}}=\mu^{(c,\tau,\sigma)}_j\frac{1}{j}=c^{j^{\sigma}-(j-1)^{\sigma}}\frac{j^{\tau j^{\sigma}}}{(j-1)^{\tau (j-1)^{\sigma}}}\frac{1}{j},$$
	and we set now $f(t):=\log(c)(t^{\sigma}-(t-1)^{\sigma})+\tau t^{\sigma}\log(t)-\tau(t-1)^{\sigma}\log(t-1)-\log(t)$, $t>1$. Then for all $t>1$:
	\begin{align*}
		f'(t)&=\sigma\log(c)(t^{\sigma-1}-(t-1)^{\sigma-1})+\sigma\tau t^{\sigma-1}\log(t)+\tau t^{\sigma}\frac{1}{t}
		\\&
		-\tau\sigma(t-1)^{\sigma-1}\log(t-1)-\tau(t-1)^{\sigma}\frac{1}{t-1}-\frac{1}{t}
		\\&
		=(\sigma\log(c)+\tau)(t^{\sigma-1}-(t-1)^{\sigma-1})+\tau\sigma(t^{\sigma-1}\log(t)-(t-1)^{\sigma-1}\log(t-1))-\frac{1}{t}\ge 0
		\\&
		\Leftrightarrow(\sigma\log(c)+\tau)t(t^{\sigma-1}-(t-1)^{\sigma-1})+\tau\sigma t(t^{\sigma-1}\log(t)-(t-1)^{\sigma-1}\log(t-1))\ge 1.
	\end{align*}
	We now continue to show
	\begin{equation}
		\label{eq:suffices}
		(\sigma\log(c)+\tau)t(t^{\sigma-1}-(t-1)^{\sigma-1})+\tau\sigma t\log(t-1)(t^{\sigma-1}-(t-1)^{\sigma-1}) \quad \rightarrow \quad \infty, ~ t \rightarrow \infty,
	\end{equation}
	which obviously implies the above statement.\par
	
	Observe that $t^{\sigma-1}-(t-1)^{\sigma-1}=(\sigma-1)\xi^{\sigma-2}$ for some $\xi\in(t-1,t)$. Therefore, for $t>2$ and $\sigma > 2$, we have
	$$
	(\sigma-1)t^{\sigma-2}\ge t^{\sigma-1}-(t-1)^{\sigma-1}\ge(\sigma-1)(t-1)^{\sigma - 2},
	$$
	whereas in the case $1<\sigma\le 2$, we have
	\[
	(\sigma-1)(t-1)^{\sigma-2}\ge t^{\sigma-1}-(t-1)^{\sigma-1} \ge (\sigma-1)t^{\sigma - 2}.
	\]
	Plugging in the appropriate term into \eqref{eq:suffices} finishes the proof.

	
	$(iii)$
	We claim that for some choices $c,c_1>0$ and some $A\ge 1$ (large) we get for all $j\in\NN_{>0}$:
	
	\begin{align*}
		&M^{(c,\tau,\sigma)}_{j+1}\le AM^{(c_1,\tau,\sigma)}_j\Leftrightarrow c^{(j+1)^{\sigma}}(j+1)^{\tau(j+1)^{\sigma}}\le A c_1^{j^{\sigma}}j^{\tau j^{\sigma}}
		\\&
		\Leftrightarrow(j+1)^{\sigma}\log(c)+\tau(j+1)^{\sigma}\log(j+1)\le\log(A)+j^{\sigma}\log(c_1)+\tau j^{\sigma}\log(j)
		\\&
		\Leftrightarrow\tau\underbrace{\frac{(j+1)^{\sigma}\log(j+1)-j^{\sigma}\log(j)}{(j+1)-j}}_{=:\Delta_j}\le\log(A)+j^{\sigma}\log(c_1)-(j+1)^{\sigma}\log(c).
	\end{align*}
	Note that the very first estimate is clear for $j=0$ by the convention $0^0:=1$ and when taking $A\ge c^{\sigma}$. Set $f(t):=t^{\sigma}\log(t)$, $t\ge 1$, and then $f'(t)=\sigma t^{\sigma-1}\log(t)+t^{\sigma-1}=t^{\sigma-1}(\sigma\log(t)+1)$ is strictly increasing (and tending to infinity as $t\rightarrow+\infty$). Thus we have $\Delta_j\le f'(j+1)=(j+1)^{\sigma-1}(\sigma\log(j+1)+1)\le 2\sigma(j+1)^{\sigma-1}\log(j+1)$. On the other hand, when given $c\ge 1$ we choose $c_1:=(2c)^{2^{\sigma}}(>c)$, and then for all $j\ge 1$:
	\begin{align*}
		j^{\sigma}\log(c_1)-(j+1)^{\sigma}\log(c)&=(2j)^{\sigma}\log(2c)-(j+1)^{\sigma}\log(c)
		\\&
		\ge(j+1)^{\sigma}\log(2c)-(j+1)^{\sigma}\log(c)=(j+1)^{\sigma}\log(2).
	\end{align*}
	Thus, in the Roumieu case we are able to conclude when choosing $A\ge 1$ sufficiently large.
	
	When given $c_1<1$, then we choose $c:=\frac{c_1}{2}$ and get for all $j\ge 1$:
	\begin{align*}
		j^{\sigma}\log(c_1)-(j+1)^{\sigma}\log(c)&=j^{\sigma}\log(c_1)-(j+1)^{\sigma}\log(c_1)+(j+1)^{\sigma}\log(2)
		\\&
		\ge (j+1)^{\sigma}\log(2),
	\end{align*}
		since $\log(c_1)<0$. This proves the Beurling case.\vspace{6pt}
	

	$(iv)$ First, by Stirling's formula we get for all $c>0$ and $j\ge 1$:
	\begin{align*}
		&c^{j^{\sigma-1}}j^{\tau j^{\sigma-1}-1}=\frac{c^{j^{\sigma-1}}j^{\tau j^{\sigma-1}}}{j}=\frac{(M^{(c,\tau,\sigma)}_j)^{1/j}}{j}
		\\&
		\le(m^{(c,\tau,\sigma)}_j)^{1/j}\le\frac{e}{j}(M^{(c,\tau,\sigma)}_j)^{1/j}=ec^{j^{\sigma-1}}j^{\tau j^{\sigma-1}-1}.
	\end{align*}
	Thus \hyperlink{R-rai}{$(\mathcal{M}_{\{\on{rai}\}})$} follows because we have for all $1\le c\le c_1$, $A\ge e$ and $1\le j\le k$:
	\begin{align*}
		ec^{j^{\sigma-1}}j^{\tau j^{\sigma-1}-1}\le Ac_1^{k^{\sigma-1}}k^{\tau k^{\sigma-1}-1}.
	\end{align*}
	Concerning \hyperlink{B-rai}{$(\mathcal{M}_{(\on{rai})})$}, let $0<c_1\le c<1$, $A\ge 1$ and $1\le j\le k$, then
	\begin{align*}
		&ec_1^{j^{\sigma-1}}j^{\tau j^{\sigma-1}-1}\le Ac^{k^{\sigma-1}}k^{\tau k^{\sigma-1}-1}
		\\&
		\Leftrightarrow j^{\sigma-1}\log(c_1)-k^{\sigma-1}\log(c)\le\log(A/e)+\left(\tau k^{\sigma-1}-1\right)\log(k)-\left(\tau j^{\sigma-1}-1\right)\log(j).
	\end{align*}
	So the desired estimate follows by choosing $c_1=c$ and $A$ large enough.\vspace{6pt}
	
	
	Recall that each strongly log-convex sequence satisfying $m_0=M_0=1$ has the property that $j\mapsto(m_j)^{1/j}$ is nondecreasing, compare this with $(ii)$.\vspace{6pt}
	
	$(v)$ This follows by $(i)$, $(iii)$ and $(iv)$; see \cite[Lemma 1 $(1)$]{characterizationstabilitypaper}.\vspace{6pt}
	
	$(vi)$ By repeating the arguments given in $(ii)$ we see that for each $\alpha>0$ (and any $c>0$) the mapping $j\mapsto\mu^{(c,\tau,\sigma)}_j\frac{1}{j^{\alpha}}$ is eventually non-decreasing and by $(i)$ one has $\left(\frac{M^{(c,\tau,\sigma)}_j}{j!^{\alpha}}\right)^{1/j}\rightarrow+\infty$ for any $\alpha>0$.
	
	Thus $\gamma(M^{(c,\tau,\sigma)})=+\infty$ follows for all $c>0$ (however this implication is in general strict; see \cite[Prop. 4.4 $(i)\Rightarrow(ii)$]{surjectivityregularsequences}) and so \cite[Thm. 3.11]{index} yields the assertion.
	
	$(vii)$ We test conditions \hyperlink{R-mg}{$(\mathcal{M}_{\{\on{mg}\}})$} and \hyperlink{B-mg}{$(\mathcal{M}_{(\on{mg})})$} for $j=k\ge 1$. So let $A\ge 1$ and $c,c_1>0$ be given, then:
	\begin{align*}
		&(M^{(c,\tau,\sigma)}_{2j})^{1/(2j)}\le A(M^{(c_1,\tau,\sigma)}_j)^{1/j}\Leftrightarrow c^{(2j)^{\sigma-1}}(2j)^{\tau(2j)^{\sigma-1}}\le A c_1^{j^{\sigma-1}}j^{\tau j^{\sigma-1}}
		\\&
		\Leftrightarrow 2^{\sigma-1}j^{\sigma-1}\log(c)+\tau2^{\sigma-1}j^{\sigma-1}\log(2j)\le\log(A)+j^{\sigma-1}\log(c_1)+\tau j^{\sigma-1}\log(j)
		\\&
		\Leftrightarrow 2^{\sigma-1}\left(\log(c)+\tau\log(2)\right)-\log(c_1)\le\frac{\log(A)}{j^{\sigma-1}}+\log(j)\tau\left(1-2^{\sigma-1}\right).
	\end{align*}
	As $j\rightarrow\infty$ the first summand on the right-hand side tends to $0$, whereas the second one tends to $-\infty$ and so does the right-hand side. This leads to a contradiction for any choice $c,c_1>0$.
	
	
	$(viii)$ For every $0<c_1<c_2$, we have that
	$$\lim_{j\to \infty} \left( \frac{M_j^{(c_2,\tau,\sigma)}}{M_j^{(c_1,\tau,\sigma)}} \right)^{1/j} = \lim_{j\to \infty} \left( \frac{c_2}{c_1} \right)^{j^{\sigma-1}}=\infty.$$

	\qed\enddemo
	
	\subsection{Results for PTT-classes}
	\label{sec:pttresults}
	By Theorem \ref{PPTasweightmatrixthm} we know that $\mathcal{E}_{[\tau,\sigma]}(U)$ can be identified with the matrix class $\mathcal{E}_{[\mathcal{M}^{\tau,\sigma}]}(U)$.
	From $(i)$ and $(iii)$ in Theorem \ref{fixedparammainresult} it follows immediately that $\mathcal{E}_{[\mathcal{M}^{\tau,\sigma}]}(U)$ contains the real analytic functions and the restrictions of entire functions and it is closed with respect to taking derivatives.\par
	By employing results from various works, let us now give a (non-exhaustive) list of results that hold for those matrix classes due to the regularity properties listed in Theorem \ref{fixedparammainresult}.\par

	\begin{itemize}
		\item[$(a)$] \textbf{Stability properties (\cite{characterizationstabilitypaper}):} $\mathcal{E}_{[\mathcal{M}^{\tau,\sigma}]}$ is...
		\begin{itemize}
			\item stable under composition,
			\item stable under solving ODEs,
			\item stable under inversion,
			\item inverse-closed.
		\end{itemize}
		This follows since by $(i)$, $(iii)$ and $(iv)$ the classes $\mathcal{E}_{[\mathcal{M}^{\tau,\sigma}]}$ satisfy all necessary properties such that Theorems 5 and 6 from \cite{characterizationstabilitypaper} are applicable.
		
		\item[$(b)$] \textbf{Almost analytic extensions (\cite{almostanalytic}):}
		$\mathcal{E}_{[\mathcal{M}^{\tau,\sigma}]}$-regularity of a function can be characterized by \emph{almost analytic extension}. This means that a function $f$ is in $\mathcal{E}_{[\mathcal{M}^{\tau,\sigma}]}(U)$ if and only if, for any \emph{quasiconvex} domain $V$ relatively compact in $U$, $f|_V$ can be extended to a function $F$ on $\mathbb{C}^d$ such that $\overline{\partial}F$ tends to $0$ sufficiently fast near $V$ (measured in terms of $\mathcal{M}^{\tau,\sigma}$).
		
		This follows since by $(i)$, $(ii)$ and $(iii)$ we have that $\widetilde{\mathcal{M}}^{\tau,\sigma}$ is a {\itshape regular} weight matrix in the sense of \cite[Def. 2.6]{almostanalytic}.
		
		\item[$(c)$] \textbf{Image of the Borel map (\cite{petzsche}, \cite{surjectivity}, \cite{maximal}, \cite{optimalRoumieu}, \cite{optimalBeurling}):} We have the following description of the image of the Borel map:
		\[
		j^\infty_0(\mathcal{E}_{[\mathcal{M}^{\tau,\sigma}]})=\Lambda_{[\mathcal{M}^{\tau,\sigma}]}.
		\]
		By $(vi)$ we have that each $M^{(c,\tau,\sigma)}$ has \hyperlink{gamma1}{$(\gamma_1)$} and the rest follows from the results of aforementioned papers.
		
		\item[$(d)$] \textbf{PTT-classes are not ``classical ultradifferentiable spaces'' (\cite{compositionpaper}, \cite{testfunctioncharacterization}):}
		By $(vii)$ and $(viii)$, neither $\mathcal{E}_{\{\mathcal{M}^{\tau,\sigma}\}}(U)$ nor $\mathcal{E}_{(\mathcal{M}^{\tau,\sigma})}(U)$ coincides (as vector spaces) with $\mathcal{E}_{\{M\}}(U)$, $\mathcal{E}_{\{\omega\}}(U)$, or, respectively, with $\mathcal{E}_{(M)}(U)$, $\mathcal{E}_{(\omega)}(U)$ for any weight sequence $M$ or any weight function $\omega$; see \cite[Prop. 4.6]{compositionpaper} and \cite[Cor. 3.17]{testfunctioncharacterization}.
		
		\item[$(e)$] \textbf{Nuclearity (\cite{testfunctioncharacterization}):} By $(iii)$ the classes $\mathcal{E}_{\{\mathcal{M}^{\tau,\sigma}\}}(U)$ and $\mathcal{E}_{(\mathcal{M}^{\tau,\sigma})}(U)$  are nuclear;
		see \cite[Prop. 7.2]{testfunctioncharacterization}.
		
		\item[$(f)$] \textbf{Almost harmonic extensions (\cite{debrouwvindas2020}):}
		$\mathcal{E}_{[\mathcal{M}^{\tau,\sigma}]}$-regularity of a function can be characterized by \emph{almost harmonic extension}. A little simplified, this means that a pair of functions $\phi_0, \phi_1$ is of  $\mathcal{E}_{[\mathcal{M}^{\tau,\sigma}]}$-regularity (on a subset of $\RR^d)$, if and only if (locally) there exists a function $\Phi$ (on a subset of $\RR^{d+1}$) such that the restriction of $\Phi$ to $\RR^d$ coincides with $\phi_0$, and the restriction of $\partial_y \Phi$ to $\RR^d$ coincides with $\phi_1$, and $\Delta \Phi$ tends to $0$ sufficiently fast near $\RR^d$ (measured in terms of $\mathcal{M}^{\tau,\sigma}$).\par

		This is due to the fact that Theorems \cite[Thm. 3.1, Thm. 3.2, Thm. 4.6]{debrouwvindas2020} can be applied to the classes $\mathcal{E}_{[\mathcal{M}^{\tau,\sigma}]}$:
		For this note that $[\mathfrak{M}.1]^{*}_{w}$ holds true by $(ii)$ in Theorem \ref{fixedparammainresult} (each strongly log-convex sequence even satisfies $(M.1)^{*}$), $[\mathfrak{M}.2]'$ by $(iii)$ and $(NA)$ is precisely \hyperlink{B-Comega}{$(\mathcal{M}_{(\on{C}^{\omega})})$} which is valid by $(i)$. Finally, also in \cite{debrouwvindas2020} a matrix $\mathcal{M}$ is called non-quasianalytic if each $M\in\mathcal{M}$ is non-quasianalytic and this is valid, in particular, by $(vi)$.
	\end{itemize}

	\section{PTT-limit classes as spaces defined by weight matrices} \label{PTTnonfixedparametersect}
	
	In \cite{PTT2015}, for fixed $\sigma\ge 1$,  the authors also consider limits with respect to the parameter $\tau$, i.e., the spaces $\mathcal{E}_{\infty,\sigma}$ and $\mathcal{E}_{0,\sigma}$, presented in  Subsection \ref{sec:ptt1},  which are endowed with the natural inductive resp. projective limit topology.

	
	The main reason to consider these classes is represented by the fact, that they are stable with respect to so-called \emph{ultradifferential operators.}  Observe here that a function $f$ lies in $\mathcal{E}_{\infty,\sigma}(U)$ if and only if there exists a uniform $\tau$ such that $f \in \mathcal{E}_{\{\tau,\sigma\}}(K)$ for all $K \subset \subset U$ and thus the limit classes $\mathcal{E}_{\infty,\sigma}(U)$ do not quite fit in the realm of Roumieu-type classes defined via weight matrices since quantifiers are exchanged. In the latter case it is allowed that $\tau$ is also depending on $K$; see \eqref{generalroumieu}.\vspace{6pt}
	
	In order to comment on this subtle difference, first for $\sigma\ge 1$ let us from now on consider the matrix (using the notation from \eqref{PPTconvenientmatrix})
	\begin{equation}\label{weightmatrixsigma}
		\mathcal{M}^\sigma:=\{M^{(\tau,\tau,\sigma)}: ~\tau >0\},\hspace{15pt}M^{(\tau,\tau,\sigma)}_j:=\tau^{j^{\sigma}}j^{\tau j^{\sigma}}.
	\end{equation}
	
	We get the following connection with the respective matrix class defined in terms of $\mathcal{M}^{\sigma}$.
	
	\begin{theorem}\label{PPTasweightmatrixthmtau}
		Let $U, V\subseteq \RR^d$ be open, and $\overline{V} \subset\subset U$. Then
		as locally convex vector spaces we get
		\begin{equation}\label{PPTasweightmatrixthmtauequ}
			\mathcal{E}_{\infty, \sigma}(U) \hookrightarrow \mathcal{E}_{\{ \mathcal{M}^\sigma \}}(U) \hookrightarrow \mathcal{E}_{\infty, \sigma}(V),\hspace{15pt}\mathcal{E}_{0,\sigma}(U)=\mathcal{E}_{(\mathcal{M}^{\sigma})}(U).
		\end{equation}
	\end{theorem}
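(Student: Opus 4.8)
The plan is to reduce everything, via Theorem \ref{PPTasweightmatrixthm} (equivalently Proposition \ref{PPTasweightmatrixproposition}), to statements about the sequences $M^{(c,\tau,\sigma)}_j=c^{j^\sigma}j^{\tau j^\sigma}$ and then to use one elementary comparison: since $j\ge 1$ and $j^\sigma\ge 0$, one has $M^{(c,\tau,\sigma)}_j\le M^{(\tau',\tau',\sigma)}_j$ for all $j$ whenever $\tau'\ge\max\{c,\tau\}$, and $M^{(\tau',\tau',\sigma)}_j\le M^{(c,\tau,\sigma)}_j$ for all $j$ whenever $\tau'\le\min\{c,\tau\}$. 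Thus the diagonal family $\mathcal{M}^\sigma=\{M^{(\tau,\tau,\sigma)}:\tau>0\}$ is, pointwise, cofinal (resp.\ coinitial) among all $M^{(c,\tau,\sigma)}$, so that for every $K\subset\subset U$
\[
\bigcup_{\tau>0}\mathcal{E}_{\{M^{(\tau,\tau,\sigma)}\}}(K)=\bigcup_{\tau>0}\mathcal{E}_{\{\tau,\sigma\}}(K),\qquad
\bigcap_{\tau>0}\mathcal{E}_{(M^{(\tau,\tau,\sigma)})}(K)=\bigcap_{c,\tau>0}\mathcal{E}_{(M^{(c,\tau,\sigma)})}(K),
\]
using $\mathcal{E}_{[\tau,\sigma]}(K)=\mathcal{E}_{[\mathcal{M}^{\tau,\sigma}]}(K)$ from Theorem \ref{PPTasweightmatrixthm} and the monotonicity of the involved classes in the underlying sequence.

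For the Roumieu part, first $f\in\mathcal{E}_{\infty,\sigma}(U)$ means a single $\tau$ works on every $K\subset\subset U$; a fortiori $\tau$ may depend on $K$, and the first displayed identity gives $f\in\mathcal{E}_{\{\mathcal{M}^\sigma\}}(U)$. Continuity of $\mathcal{E}_{\infty,\sigma}(U)\hookrightarrow\mathcal{E}_{\{\mathcal{M}^\sigma\}}(U)$ follows by repeated use of the universal properties of the (co)limits defining the two spaces, which reduces the claim to the pointwise estimate $M^{(c,\tau,\sigma)}\le M^{(\max\{c,\tau\},\max\{c,\tau\},\sigma)}$ together with $\mathcal{E}_{\{\tau,\sigma\}}(K)=\varinjlim_{c}\varinjlim_h\mathcal{E}_{M^{(c,\tau,\sigma)},h}(K)$. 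For the second inclusion, choose a sufficiently regular compact $K_0$ with $\overline V\subseteq K_0\subset\subset U$ (possible since $\overline V$ is compact in the open set $U$). If $f\in\mathcal{E}_{\{\mathcal{M}^\sigma\}}(U)$, there is a single $\tau_0$ with $f\in\mathcal{E}_{\{M^{(\tau_0,\tau_0,\sigma)}\}}(K_0)\subseteq\mathcal{E}_{\{\tau_0,\sigma\}}(K_0)$; since every $K'\subset\subset V$ satisfies $K'\subseteq K_0$, the restriction lies in $\mathcal{E}_{\{\tau_0,\sigma\}}(K')$ with the \emph{same} $\tau_0$, so $f\in\mathcal{E}_{\{\tau_0,\sigma\}}(V)\subseteq\mathcal{E}_{\infty,\sigma}(V)$. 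Continuity is obtained by composing the (continuous) restriction $\mathcal{E}_{\{\mathcal{M}^\sigma\}}(U)\to\mathcal{E}_{\{\mathcal{M}^\sigma\}}(K_0)=\varinjlim_\tau\mathcal{E}_{\{M^{(\tau,\tau,\sigma)}\}}(K_0)$ with the continuous maps $\mathcal{E}_{\{M^{(\tau,\tau,\sigma)}\}}(K_0)\hookrightarrow\mathcal{E}_{\{\tau,\sigma\}}(K_0)\to\mathcal{E}_{\{\tau,\sigma\}}(V)\hookrightarrow\mathcal{E}_{\infty,\sigma}(V)$, and invoking the universal property of the inductive limit over $\tau$ in the source.

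For the Beurling part, Theorem \ref{PPTasweightmatrixthm} gives $\mathcal{E}_{(\tau,\sigma)}(U)=\mathcal{E}_{(\mathcal{M}^{\tau,\sigma})}(U)=\bigcap_{c>0}\mathcal{E}_{(M^{(c,\tau,\sigma)})}(U)$ for each $\tau$, hence
\[
\mathcal{E}_{0,\sigma}(U)=\bigcap_{\tau>0}\mathcal{E}_{(\tau,\sigma)}(U)=\bigcap_{c,\tau>0}\mathcal{E}_{(M^{(c,\tau,\sigma)})}(U),\qquad
\mathcal{E}_{(\mathcal{M}^\sigma)}(U)=\bigcap_{\tau>0}\mathcal{E}_{(M^{(\tau,\tau,\sigma)})}(U),
\]
and the inclusion $\mathcal{E}_{0,\sigma}(U)\subseteq\mathcal{E}_{(\mathcal{M}^\sigma)}(U)$ is trivial (intersection over a subfamily). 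For the reverse, given $c,\tau>0$ put $\tau'=\min\{c,\tau\}$; then $M^{(\tau',\tau',\sigma)}\le M^{(c,\tau,\sigma)}$ pointwise, hence $M^{(\tau',\tau',\sigma)}\hyperlink{mpreceq}{\preceq}M^{(c,\tau,\sigma)}$, and since $N\hyperlink{mpreceq}{\preceq}M$ implies $\mathcal{E}_{(N)}\subseteq\mathcal{E}_{(M)}$ (absorbing the constant into the free parameter $h$), any $f\in\mathcal{E}_{(\mathcal{M}^\sigma)}(U)$ lies in $\mathcal{E}_{(M^{(\tau',\tau',\sigma)})}(U)\subseteq\mathcal{E}_{(M^{(c,\tau,\sigma)})}(U)$; thus $\mathcal{E}_{(\mathcal{M}^\sigma)}(U)\subseteq\mathcal{E}_{0,\sigma}(U)$ and the two spaces coincide as sets. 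Finally, both $\mathcal{E}_{0,\sigma}(U)$ and $\mathcal{E}_{(\mathcal{M}^\sigma)}(U)$ are countable projective limits of Banach spaces, hence Fr\'echet, and both are continuously included in $C(U)$; therefore Proposition \ref{funcanaprop} (and the Remark following it) upgrades the set equality to equality as Fr\'echet, hence as locally convex vector, spaces.

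The main obstacle is the bookkeeping with exchanged quantifiers: $\mathcal{E}_{\infty,\sigma}(U)$ demands a $\tau$ uniform over all compacta in $U$, while $\mathcal{E}_{\{\mathcal{M}^\sigma\}}(U)$ only demands a $\tau$ for each compactum, and these genuinely differ in general --- which is exactly why only a one-sided sandwich (and not equality) is available in the Roumieu case. The one real idea is that restricting to a relatively compact $V$ with a fixed auxiliary regular compact $K_0\supseteq\overline V$ turns the $K$-dependent $\tau$ into one uniform over $V$; everything else is routine transport along (co)limits and the elementary monotonicity of $\tau\mapsto M^{(\tau,\tau,\sigma)}$.
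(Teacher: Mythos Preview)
Your proof is correct and follows essentially the same route as the paper's: both reduce to Proposition~\ref{PPTasweightmatrixproposition} (i.e., Theorem~\ref{PPTasweightmatrixthm}) and then exploit that a compact $\overline V\subset\subset U$ forces a single $\tau$ in the Roumieu case, while in the Beurling case all quantifiers are universal so the diagonal family $\{M^{(\tau,\tau,\sigma)}\}$ is coinitial among the $M^{(c,\tau,\sigma)}$. The only minor differences are that the paper absorbs the geometric factor $h^j$ by passing to a larger index $\tau'>\tau$ (an instance of \hyperlink{R-L}{$(\mathcal{M}_{\{\on{L}\}})$}) rather than via your inclusion $\mathcal{E}_{\{M^{(\tau_0,\tau_0,\sigma)}\}}(K_0)\subseteq\mathcal{E}_{\{\tau_0,\sigma\}}(K_0)$, and that the paper gets the Beurling topology directly from the projective-limit description whereas you invoke Proposition~\ref{funcanaprop}; both variants are valid and amount to the same underlying computation.
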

	
	\demo{Proof}
	{\itshape The Roumieu case.}
	The first inclusion is clear from the definition of the respective spaces. For the second one, observe that any $f \in \mathcal{E}_{\{\mathcal{M}^\sigma\}}(U)$ lies in $\mathcal{E}_{\{M^{(\tau,\tau,\sigma)}\}}(\overline{V})$ for some $\tau$, i.e., there exists $h\ge 1$ and $A>0$ such that for all $x \in \overline{V}$ and $\alpha\in\NN^d$ we have
	\[
	|f^{(\alpha)}(x)| \le Ah^{|\alpha|} M^{(\tau,\tau,\sigma)}_{|\alpha|},
	\]
	and for any $\tau'>\tau$, we find $B>0$ such that for all $j$ we have
	\[
	h^{j} M^{(\tau,\tau,\sigma)}_{j}\le B M^{(\tau',\tau',\sigma)}_{j},
	\]
	which finishes the Roumieu case.
	
	\vspace{6pt}
	
	{\itshape The Beurling case.}
	Since there is a universal quantifier for the compact set, the index $\tau$, and the geometric factor $h$, we do not have to worry about interchanging those.
	The rest follows from Proposition \ref{PPTasweightmatrixproposition}.
	
	
	\qed\enddemo
	
	\begin{proposition}
		Let $\sigma\ge 1$ and $\mathcal{N}=\{N^{(x)}: x>0\}$ be a \hyperlink{Msc}{$(\mathcal{M}_{\on{sc}})$} and non-quasianalytic weight matrix. Suppose that we have
		\[
		\mathcal{E}_{\infty,\sigma}(U) \subseteq \mathcal{E}_{\{\mathcal{N}\}}(U).
		\]
		Then it follows that
		\[
		\mathcal{E}_{\infty,\sigma}(U) \subsetneq \mathcal{E}_{\{\mathcal{N}\}}(U).
		\]
	\end{proposition}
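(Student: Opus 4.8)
The obstruction is a quantifier mismatch: in $\mathcal{E}_{\{\mathcal{N}\}}(U)=\varprojlim_{K\subset\subset U}\varinjlim_{x>0}\mathcal{E}_{\{N^{(x)}\}}(K)$ the index $x$ is allowed to depend on the compact set $K$, whereas membership in $\mathcal{E}_{\infty,\sigma}(U)=\varinjlim_{\tau\to\infty}\mathcal{E}_{\{\tau,\sigma\}}(U)$ forces one and the same $\tau$ on \emph{all} of $U$. The plan is therefore to construct a function $f\in\mathcal{E}_{\{\mathcal{N}\}}(U)\setminus\mathcal{E}_{\infty,\sigma}(U)$ by gluing together compactly supported local pieces whose PTT-order deteriorates as one moves towards $\partial U$ (or towards infinity, if $U=\RR^d$). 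Since $\mathcal{E}_{\infty,\sigma}(U)\subseteq\mathcal{E}_{\{\mathcal{N}\}}(U)$ by hypothesis, each such piece lies in $\mathcal{E}_{\{\mathcal{N}\}}(U)$ automatically, and local finiteness of the supports transports this to the glued $f$; a pointwise lower bound on derivatives keeps $f$ out of every $\mathcal{E}_{\{\tau,\sigma\}}(U)$.

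First I would fix points $p_n\in U$ ($n\in\NN_{>0}$) without accumulation point in $U$ (possible since $U$ is open and non-compact) and pairwise disjoint balls $B_n$ centred at $p_n$, with $\overline{B_n}\subset U$ and $\{B_n\}_n$ locally finite, and set $\tau_n:=n$. For each $n$ the sequence $M^{(1,\tau_n,\sigma)}$ of \eqref{PPTconvenientmatrix} is normalized, log-convex and satisfies $(M^{(1,\tau_n,\sigma)}_j)^{1/j}=j^{\tau_n j^{\sigma-1}}\to\infty$ (Theorem \ref{fixedparammainresult}$(i)$,$(ii)$), so the optimal function $\theta_n:=\theta_{M^{(1,\tau_n,\sigma)}}$ from \eqref{characterizingfct} is available, with $\theta_n\in\mathcal{E}_{\{M^{(1,\tau_n,\sigma)}\}}(\RR)\subseteq\mathcal{E}_{\{\mathcal{M}^{\tau_n,\sigma}\}}(\RR)=\mathcal{E}_{\{\tau_n,\sigma\}}(\RR)$ (Proposition \ref{PPTasweightmatrixproposition}) and $|\theta_n^{(j)}(0)|\ge M^{(1,\tau_n,\sigma)}_j$ for all $j$. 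Since $\mathcal{M}^{\tau_n,\sigma}$ is non-quasianalytic (Theorem \ref{fixedparammainresult}$(vi)$ when $\sigma>1$; when $\sigma=1$ this follows, once $\tau_n>1$, from $M^{\tau_n,1}\ge G^{\tau_n}$ together with the Carleman criterion), there is a cut-off $\psi_n$ supported in $B_n$ with $\psi_n\equiv1$ near $p_n$ and $\psi_n\in\mathcal{E}_{\{M^{(c_n,\tau_n,\sigma)}\}}(U)$ for some $c_n\ge1$. Putting $\chi_n(x):=\psi_n(x)\,\theta_n(x_1-p_{n,1})$ (a function on $\RR^d$ depending on $x_1$ only through $\theta_n$) I obtain a compactly supported function that still lies in $\mathcal{E}_{\{M^{(c_n,\tau_n,\sigma)}\}}(U)\subseteq\mathcal{E}_{\{\tau_n,\sigma\}}(U)$: indeed $M^{(1,\tau_n,\sigma)}\le M^{(c_n,\tau_n,\sigma)}$ are pointwise-ordered normalized log-convex sequences, so the Leibniz rule together with $M_kM_{j-k}\le M_j$ costs only a factor $2^j$; moreover $|\partial_{x_1}^j\chi_n(p_n)|=|\theta_n^{(j)}(0)|\ge M^{(1,\tau_n,\sigma)}_j$ for all $j$. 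In particular $\chi_n\in\mathcal{E}_{\{\tau_n,\sigma\}}(U)\subseteq\mathcal{E}_{\infty,\sigma}(U)\subseteq\mathcal{E}_{\{\mathcal{N}\}}(U)$.

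Then I would set $f:=\sum_n\chi_n$, a well-defined smooth function on $U$ by local finiteness. For $K\subset\subset U$ only finitely many $\chi_n$ are non-zero on $K$; each such $\chi_n|_K$ lies in some $\mathcal{E}_{\{N^{(x_n)}\}}(K)$ (as $\chi_n\in\mathcal{E}_{\{\mathcal{N}\}}(U)$), so with $x:=\max x_n$ over this finite set and $N^{(x')}\le N^{(x)}$ for $x'\le x$ we get $f|_K\in\mathcal{E}_{\{N^{(x)}\}}(K)\subseteq\mathcal{E}_{\{\mathcal{N}\}}(K)$; since $K$ was arbitrary, $f\in\mathcal{E}_{\{\mathcal{N}\}}(U)$. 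Conversely, suppose $f\in\mathcal{E}_{\infty,\sigma}(U)$, so $f\in\mathcal{E}_{\{\tau,\sigma\}}(U)=\mathcal{E}_{\{\mathcal{M}^{\tau,\sigma}\}}(U)$ for some $\tau$; choose $n$ with $\tau_n=n>\tau$ and a regular compact neighbourhood $K$ of $p_n$, so $f|_K\in\mathcal{E}_{\{M^{(c,\tau,\sigma)}\}}(K)$ for some $c>0$. As $f=\chi_n$ near $p_n$, this forces $M^{(1,\tau_n,\sigma)}_j\le Ah^jM^{(c,\tau,\sigma)}_j$ for suitable $A,h\ge1$ and all $j$, i.e.
\[
\Bigl(M^{(1,\tau_n,\sigma)}_j/M^{(c,\tau,\sigma)}_j\Bigr)^{1/j}=c^{-j^{\sigma-1}}\,j^{(\tau_n-\tau)\,j^{\sigma-1}}\le A^{1/j}h
\]
would be bounded in $j$, which is impossible since the left-hand side tends to $+\infty$ ($\tau_n>\tau$, $\sigma\ge1$). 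Hence $f\notin\mathcal{E}_{\infty,\sigma}(U)$, so $\mathcal{E}_{\infty,\sigma}(U)\subsetneq\mathcal{E}_{\{\mathcal{N}\}}(U)$.

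I expect the only delicate step to be the construction of the pieces $\chi_n$: one needs a single function that is genuinely of order $\tau_n$ (certified by the derivative lower bound, so it escapes $\mathcal{E}_{\{\tau,\sigma\}}$ for every $\tau<\tau_n$) and yet has small compact support so the pieces do not interact. This is exactly what forces the combined use of the optimal functions \eqref{characterizingfct}, the non-quasianalyticity of $\mathcal{M}^{\tau_n,\sigma}$, the identification of Proposition \ref{PPTasweightmatrixproposition}, and a Leibniz estimate to keep the cut-off product in the class; the rest (locally finite supports, maximal index on each compact) is bookkeeping. Note that non-quasianalyticity of $\mathcal{N}$ itself is not used above — only that of the auxiliary matrices $\mathcal{M}^{\tau_n,\sigma}$ — but it guarantees that the standing hypothesis $\mathcal{E}_{\infty,\sigma}(U)\subseteq\mathcal{E}_{\{\mathcal{N}\}}(U)$ is not vacuous (for instance $\mathcal{N}=\mathcal{M}^\sigma$ qualifies by Theorem \ref{PPTasweightmatrixthmtau}, and then the Proposition upgrades the first inclusion there to a proper one).
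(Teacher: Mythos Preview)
Your proof is correct and follows the same overall plan as the paper's (a locally finite sum of compactly supported pieces whose ``order'' deteriorates towards $\partial U$), but the implementation is genuinely different in one place. You build the pieces $\chi_n$ \emph{inside the PTT-hierarchy}: optimal functions for $M^{(1,\tau_n,\sigma)}$, cut-offs coming from non-quasianalyticity of $\mathcal{M}^{\tau_n,\sigma}$, then the hypothesis $\mathcal{E}_{\infty,\sigma}(U)\subseteq\mathcal{E}_{\{\mathcal{N}\}}(U)$ is invoked only to place each finished $\chi_n$ in $\mathcal{E}_{\{\mathcal{N}\}}(U)$. The paper instead builds the pieces $h_j$ \emph{inside the $\mathcal{N}$-hierarchy}: it uses the inclusion hypothesis together with the optimal functions $\theta_{M^{(\tau_1,\tau_1,\sigma)}}$ to extract the sequence relation $M^{(\tau,\tau,\sigma)}\vartriangleleft N^{(x)}$ (their \eqref{eq:strongest}), then takes optimal functions for $N^{(j)}$ and cut-offs $\phi_j\in\mathcal{D}_{\{N^{(j)}\}}$ provided by non-quasianalyticity of $\mathcal{N}$; the relation \eqref{eq:strongest} is what certifies that derivatives growing like $N^{(j)}_k$ cannot be absorbed by any $\mathcal{E}_{\{\tau,\sigma\}}$.

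What each buys: your route makes the final step (``$f\notin\mathcal{E}_{\{\tau,\sigma\}}$'') a direct PTT-sequence comparison $c^{-j^{\sigma-1}}j^{(\tau_n-\tau)j^{\sigma-1}}\to\infty$, with no reference to $\mathcal{N}$ at all, and---as you correctly observe---never uses non-quasianalyticity of $\mathcal{N}$; so your argument actually proves a slightly stronger statement than the paper records. The paper's route, in turn, shows exactly where the non-quasianalyticity hypothesis on $\mathcal{N}$ enters (it produces the cut-offs $\phi_j$), and its intermediate relation \eqref{eq:strongest} has independent interest as a sequence-level consequence of the inclusion. One small caveat in your write-up: for $\sigma=1$ and $\tau_n=1$ the sequence $M^{(1,1,1)}$ is quasianalytic, so you should start the construction at $n\ge 2$ in that case (you already flag this with ``once $\tau_n>1$'').
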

	
	\begin{proof}
		Similarly as in $(viii)$ in Theorem \ref{fixedparammainresult} we get
		\[
		\forall\;\tau_1>\tau>0:\;\;\;M^{(\tau,\tau,\sigma)} \lhd M^{(\tau_1,\tau_1,\sigma)}.
		\]
		The inclusion $\mathcal{E}_{\infty,\sigma}(U) \subseteq \mathcal{E}_{\{\mathcal{N}\}}(U)$ and the optimal function $\theta_{M^{(\tau_1,\tau_1,\sigma)}}$ (see \eqref{characterizingfct}) implies the following:
		\begin{equation}
			\label{eq:strongest}
			\forall\;\tau>0\;\exists\;x>0:\;\;\;M^{(\tau,\tau,\sigma)} \lhd N^{(x)}.
		\end{equation}
		Therefore, note that $\theta_{M^{(\tau_1,\tau_1,\sigma)}}\in\mathcal{E}_{\infty,\sigma}(U)$ for any $\tau_1>0$ since the estimate $|\theta_{M^{(\tau_1,\tau_1,\sigma)}}(t)|\le 2^{j+1}\tau_1^{j^{\sigma}}j^{\tau_1j^{\sigma}}$ holds globally on whole $\RR$; see again \cite[Thm. 1]{thilliez}, \cite[Lemma 2.9]{compositionpaper} and the detailed proof in \cite[Prop. 3.1.2]{diploma}.\vspace{6pt}
		
		Let $K_j$ be a sequence of mutually disjoint compact sets with non-empty interior contained in $U$ such that they accumulate at the boundary of $U$, i.e., for any compact set $K \subset \subset U$ there exists $j$ such that $K_j \cap K = \emptyset$. Let $S_j$ be also a sequence of compact sets such that $S_j \subseteq K_j^\circ$. Finally, let $x_j \in S_j$. Then by \cite[Cor. 3.12]{ultraextensionsurvey} there exists $\phi_j \in \mathcal{D}_{\{N^{(j)}\}}(K_j^\circ)$ such that $\phi_j \equiv 1$ on $S_j$. Let $\theta_j \in \mathcal{E}_{\{N^{(j)}\}}(\mathbb{R})$ be such that $|\theta_j^{(k)}((x_j)_1)|\ge N^{(j)}_k$ (where $(x_j)_1$ is the first component of $x_j$), and set $\Theta_j(x_1, \dots,x_d):= \theta_j(x_1)$. Finally, set
		\[
		h_j:= \Theta_j \phi_j, \quad h:= \sum_j h_j.
		\]
		Then clearly $h_j \in \mathcal{D}_{\{N^{(j)}\}}(K_j^\circ)$, and thus $h \in \mathcal{E}_{\{\mathcal{N}\}}(U)$. But $h \notin \mathcal{E}_{\{\tau,\sigma\}}$ for any $\tau $ (and therefore not in $\mathcal{E}_{\infty,\sigma}(U)$). To see this, take for given $M^{(\tau,\tau,\sigma)}$ some $j$ big enough to get \eqref{eq:strongest}. By taking $K=K_j$, one immediately gets $h \notin \mathcal{E}_{\{\tau,\sigma\}}(U)$.
	\end{proof}

	

	

	\begin{remark}\label{alternativedefrem}
		After a private communication, in the very recent paper \cite{TT2022} the authors already have taken into account this fact and included the definition of the limit classes
		\[
		\mathcal{E}^R_{\infty,\sigma}(U):= \bigcap_{K \subset\subset U} \bigcup_{\tau>0} \mathcal{E}_{\{\tau,\sigma\}}(K)=\bigcap_{K \subset\subset U} \bigcup_{\tau>0} \mathcal{E}_{\{\mathcal{M}^{\tau,\sigma}\}}(K)=\mathcal{E}_{\{\mathcal{M}^\sigma\}}(U),
		\]
		see \cite[$(2.12)$]{TT2022}. Note that this difference might be considered negligible in the light of Theorem \ref{PPTasweightmatrixthmtau} and for the Beurling-type both notions coincide; i.e.
		\[
		\mathcal{E}_{0,\sigma}(U)=\mathcal{E}_{(\mathcal{M}^\sigma)}(U).
		\]
		When considering the notion of germs of $\mathcal{E}_{\infty,\sigma}$-functions then also no difference occurs. For these equalities recall Theorem \ref{PPTasweightmatrixthmmatrix}.
	\end{remark}

	\subsection{Properties of the matrix $\mathcal{M}^{\sigma}$}
	
	From now on we focus on $\mathcal{E}_{[\mathcal{M}^\sigma]}(U)$ and gather several important growth and regularity properties for the crucial weight matrix $\mathcal{M}^{\sigma}$ from \eqref{weightmatrixsigma}.
	
	\begin{theorem}\label{matrixPsigmaproperties}
		Let $\sigma>1$, then the matrix $\mathcal{M}^{\sigma}$ has the following properties:
		\begin{itemize}
			\item[$(i)$] $\mathcal{M}^{\sigma}$ satisfies \hyperlink{B-Comega}{$(\mathcal{M}_{(\on{C}^{\omega})})$} and, more generally, even $\left(\frac{M^{(\tau, \tau,\sigma)}_j}{j!^{\alpha}}\right)^{1/j}\rightarrow+\infty$ for any $\alpha>0$ and any $\tau>0$.
			
			\item[$(ii)$] $\mathcal{M}^{\sigma}$ is equivalent to a matrix $\widetilde{\mathcal{M}}^{\sigma}$ all of whose sequences are strongly log-convex.
			
			\item[$(iii)$] $\mathcal{M}^{\sigma}$ satisfies both \hyperlink{R-mg}{$(\mathcal{M}_{\{\on{mg}\}})$} and \hyperlink{B-mg}{$(\mathcal{M}_{(\on{mg})})$}.
			
			\item[$(iv)$] $\mathcal{M}^{\sigma}$ has both \hyperlink{R-rai}{$(\mathcal{M}_{\{\on{rai}\}})$} and \hyperlink{B-rai}{$(\mathcal{M}_{(\on{rai})})$}; in fact we have that in both conditions we can choose the same index.
			
			\item[$(v)$] $\mathcal{M}^{\sigma}$ has both \hyperlink{R-FdB}{$(\mathcal{M}_{\{\on{FdB}\}})$} and \hyperlink{B-FdB}{$(\mathcal{M}_{(\on{FdB})})$}.
			
			\item[$(vi)$] $\mathcal{M}^{\sigma}$ has both \hyperlink{R-L}{$(\mathcal{M}_{\{\on{L}\}})$} and \hyperlink{B-L}{$(\mathcal{M}_{(\on{L})})$}.
			
			\item[$(vii)$] The sequences $M^{(c,c,\sigma)}$ are pair-wise not equivalent and $\mathcal{M}^{\sigma}$ has both $(\mathcal{M}_{\{\on{BR}\}})$ and $(\mathcal{M}_{(\on{BR})})$ in \cite[Sect. 4.1]{compositionpaper}.
			
			\item[$(viii)$] Each $M^{(\tau, \tau,\sigma)}$ is strongly non-quasianalytic; in fact we even have that $\gamma(M^{(\tau, \tau,\sigma)})=+\infty$ for all $\tau>0$.
		\end{itemize}
	\end{theorem}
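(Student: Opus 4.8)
The plan is to go through the eight items transporting the arguments of Theorem~\ref{fixedparammainresult} for $\mathcal{M}^{\tau,\sigma}$, the only structural novelty being that for $\mathcal{M}^{\sigma}$ the parameter $\tau$ now simultaneously governs the base $\tau^{j^{\sigma}}$ and the exponent $j^{\tau j^{\sigma}}$, so a change of index also rescales the critical growth factor $j^{\sigma}$ — this is exactly why moderate growth, which fails for $\mathcal{M}^{\tau,\sigma}$, will hold here. Throughout I use the identity $(M^{(\tau,\tau,\sigma)}_j)^{1/j}=\tau^{j^{\sigma-1}}j^{\tau j^{\sigma-1}}$ and Stirling's formula.

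The items $(i)$, $(ii)$, $(iv)$, $(v)$, $(vi)$, $(vii)$, $(viii)$ go through essentially as their counterparts. For $(i)$ one estimates $(M^{(\tau,\tau,\sigma)}_j/j!^{\alpha})^{1/j}\ge c_{\alpha}\,\tau^{j^{\sigma-1}}j^{\tau j^{\sigma-1}-\alpha}\to+\infty$, using $\sigma>1$. For $(ii)$ I would repeat the monotonicity computation from the proof of Theorem~\ref{fixedparammainresult}$(ii)$ with the fixed constant there replaced by the running parameter $\tau$: with $f(t)=\log(\tau)(t^{\sigma}-(t-1)^{\sigma})+\tau t^{\sigma}\log t-\tau(t-1)^{\sigma}\log(t-1)-\log t$ one checks $f'(t)\to+\infty$, so the quotients of $m^{(\tau,\tau,\sigma)}$ are eventually non-decreasing and each sequence may be replaced by an equivalent strongly log-convex normalized one, the modification indices being monotone in $\tau$ so the pointwise ordering of the matrix survives. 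For $(iv)$, Stirling reduces $(\mathcal{M}_{\{\on{rai}\}})$ and $(\mathcal{M}_{(\on{rai})})$, with the same index in both, to the eventual monotonicity of $j\mapsto\tau^{j^{\sigma-1}}j^{\tau j^{\sigma-1}-1}$; then $(v)$ follows from $(i)$, $(iii)$, $(iv)$ and \cite[Lemma 1 $(1)$]{characterizationstabilitypaper} (note $(iii)$ yields in particular $(\mathcal{M}_{\{\on{dc}\}})$ and $(\mathcal{M}_{(\on{dc})})$). Item $(vi)$ is immediate: $\mathcal{M}^{\sigma}$ is precisely the matrix $\mathcal{M}_{\mathcal{N},\Phi}$ of \eqref{convenientmatrixmatrix} with $N^{(\tau)}_j=j^{\tau j^{\sigma}}$ and $\Phi_j=j^{\sigma}$, an exponent sequence, so the version of Lemma~\ref{lemma.MLCondition} valid in that setting applies; alternatively $\tau'/\tau\ge h$ forces $h^{j}M^{(\tau,\tau,\sigma)}_j\le M^{(\tau',\tau',\sigma)}_j$ since $j^{\sigma}\ge j$. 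Item $(vii)$ is the computation $\bigl(M^{(\tau,\tau,\sigma)}_j/M^{(\tau',\tau',\sigma)}_j\bigr)^{1/j}=(\tau/\tau')^{j^{\sigma-1}}j^{(\tau-\tau')j^{\sigma-1}}\to0$ for $\tau<\tau'$, which gives $(\mathcal{M}_{\{\on{BR}\}})$, $(\mathcal{M}_{(\on{BR})})$ and a fortiori pairwise non-equivalence. For $(viii)$, as in Theorem~\ref{fixedparammainresult}$(vi)$, the eventual monotonicity of $j\mapsto\mu^{(\tau,\tau,\sigma)}_j/j^{\alpha}$ (a variant of the computation in $(ii)$) together with $(i)$ and \cite[Thm. 3.11]{index} yields $\gamma(M^{(\tau,\tau,\sigma)})=+\infty$, hence strong non-quasianalyticity.

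The one genuinely new — and, I expect, only laborious — item is $(iii)$. For $(\mathcal{M}_{\{\on{mg}\}})$, given $\tau$ take $\tau'$ large enough that $\tau'\ge\tau 2^{\sigma}$ and $\tau'\ge\max(\tau,1)^{2^{\sigma}}2^{\tau 2^{\sigma}}$; assuming without loss of generality $j\le k$ and using $(j+k)^{\sigma}\le2^{\sigma}k^{\sigma}$ one gets
\[
M^{(\tau,\tau,\sigma)}_{j+k}\le\max(\tau,1)^{2^{\sigma}k^{\sigma}}\,2^{\tau 2^{\sigma}k^{\sigma}}\,k^{\tau 2^{\sigma}k^{\sigma}}\le(\tau')^{k^{\sigma}}k^{\tau'k^{\sigma}}=M^{(\tau',\tau',\sigma)}_k\le M^{(\tau',\tau',\sigma)}_j M^{(\tau',\tau',\sigma)}_k,
\]
so the condition holds even with constant $1$. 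For $(\mathcal{M}_{(\on{mg})})$, given $\tau$ one picks $\tau_{0}\le\min\{1,\tau 2^{-\sigma-2}\}$, bounds $(j+k)^{\tau_{0}(j+k)^{\sigma}}\le(2k)^{\tau_{0}2^{\sigma}k^{\sigma}}$ and uses $k^{\tau k^{\sigma}}\ge2^{\tau k^{\sigma}}$ for $k\ge2$ to reduce $M^{(\tau_0,\tau_0,\sigma)}_{j+k}$ to $k^{\tau k^{\sigma}/2}$, and then verifies $k^{\tau k^{\sigma}/2}\le C^{j+k+1}M^{(\tau,\tau,\sigma)}_j M^{(\tau,\tau,\sigma)}_k$ for all large $k$. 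The main obstacle is precisely the bookkeeping here — in the Beurling direction one must, in addition, control the factors $\tau^{j^{\sigma}}$ with $\tau<1$ (which exceed $1$ only from some $\tau$-dependent index onwards) when bounding $M^{(\tau,\tau,\sigma)}_j M^{(\tau,\tau,\sigma)}_k$ from below, and absorb the remaining bounded range of indices into $C$; conceptually, though, the statement is transparent, since moving $\tau$ buys exactly the rescaling of $j^{\sigma}$ that $\mathcal{M}^{\tau,\sigma}$ lacked.
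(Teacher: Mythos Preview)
Your proposal is correct and, for items $(i)$, $(ii)$, $(iv)$, $(v)$, $(vii)$, $(viii)$, essentially coincides with the paper's proof (same reductions, same references). For $(vi)$ you invoke the matrix version of Lemma~\ref{lemma.MLCondition} and also give a direct check, whereas the paper simply writes the one-line estimate $h\le A^{1/j}(c_1/c)^{j^{\sigma}}j^{j^{\sigma}(c_1-c)}$; these are the same in spirit.

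The only genuine divergence is $(iii)$. You prove $(\mathcal{M}_{\{\on{mg}\}})$ and $(\mathcal{M}_{(\on{mg})})$ directly for arbitrary $j,k$, via $(j+k)^{\sigma}\le 2^{\sigma}k^{\sigma}$ and a careful choice of the new index; this is self-contained but, as you note, somewhat heavy in the Beurling direction (your bookkeeping does go through: for $\tau<1$ one uses that $\tau^{k^{\sigma}}k^{\tau k^{\sigma}/2}\to\infty$ so $M^{(\tau,\tau,\sigma)}_k\ge k^{\tau k^{\sigma}/2}$ eventually, while $M^{(\tau,\tau,\sigma)}_j$ is uniformly bounded below by a positive constant). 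The paper instead only verifies the estimate on the diagonal $j=k$, which is a short logarithmic computation, and then passes to the equivalent strongly log-convex matrix $\widetilde{\mathcal{M}}^{\sigma}$ from $(ii)$ and quotes \cite[Thm.~9.5.1, Thm.~9.5.3]{dissertation} to upgrade the diagonal estimate to the full mixed condition. Your route avoids the external reference at the cost of more bookkeeping; the paper's route is cleaner but relies on the cited black box.
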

	
	\demo{Proof}
	In order to shorten the notation, we write in this proof $M^{(c)}:= M^{(c,c,\sigma)}$ .\par
	
	$(i)$ This follows just as in the proof of Theorem \ref{fixedparammainresult}.
	
	
	
	$(ii)$ By the convention $0^0:=1$ we get $1=M^{(c)}_0$ and $c=M^{(c)}_1$ for all $c>0$. For each $c\ge 1$ the sequence $M^{(c)}$ is log-convex because $j\mapsto\log(j^{c j^{\sigma}})=cj^{\sigma}\log(j)$ is convex; more precisely one has $M^{(c)}\in\hyperlink{LCset}{\mathcal{LC}}$ for all $c\ge 1$.
	
	By replacing $\tau$ by $c$, we can repeat the arguments given in the proof of $(ii)$ in Theorem \ref{fixedparammainresult}. Since also the order is preserved we have that $\widetilde{\mathcal{M}}^{\sigma}$ is standard log-convex.
	
	$(iii)$ First, we test conditions \hyperlink{R-mg}{$(\mathcal{M}_{\{\on{mg}\}})$} and \hyperlink{B-mg}{$(\mathcal{M}_{(\on{mg})})$} on the diagonal, i.e., for $j=k\ge 1$. So let $A\ge 1$ and $c,c_1>0$ be given, then:
	\begin{align*}
		&(M^{(c)}_{2j})^{1/(2j)}\le A(M^{(c_1)}_j)^{1/j}\Leftrightarrow c^{(2j)^{\sigma-1}}(2j)^{c(2j)^{\sigma-1}}\le A c_1^{j^{\sigma-1}}j^{c_1j^{\sigma-1}}
		\\&
		\Leftrightarrow (2j)^{\sigma-1}\log(c)+c(2j)^{\sigma-1}\log(2j)\le\log(A)+j^{\sigma-1}\log(c_1)+c_1j^{\sigma-1}\log(j).
	\end{align*}
	We also have
	\begin{align*}
		c(2j)^{\sigma-1}\log(2j)\le j^{\sigma-1}c_1\log(j)&\Leftrightarrow 2^{\sigma-1}\log(2j)\le\frac{c_1}{c}\log(j)
		\\&
		\Leftrightarrow 2^{\sigma-1}\log(2)\le\log(j)(\frac{c_1}{c}-2^{\sigma-1}).
	\end{align*}
	In the Roumieu case, w.l.o.g. we take $c$ large enough to guarantee $c^{2^{\sigma-1}-1}\ge 1+2^{\sigma-1}$. Thus, when given such an index $c$ we choose $c_1:=c^{2^{\sigma-1}}>c$. Then, on the one hand clearly $(2j)^{\sigma-1}\log(c)=j^{\sigma-1}\log(c_1)$ and, on the other hand $\frac{c_1}{c}-2^{\sigma-1}\ge 1\Leftrightarrow c_1\ge(1+2^{\sigma-1})c$ because $c_1=c^{2^{\sigma-1}}\ge(1+2^{\sigma-1})c\Leftrightarrow c^{2^{\sigma-1}-1}\ge 1+2^{\sigma-1}$. Thus we are done when choosing $A$ sufficiently large and note that $c_1\rightarrow\infty$ as $c\rightarrow\infty$.
	
	In the Beurling case, w.l.o.g. we take given $c_1<1$ small enough to ensure $c_1<\frac{1}{1+2^{\sigma-1}}$ and then we set $c:=c_1^2<c_1$. Then, on the one hand, $c_1\ge c(1+2^{\sigma-1})$ is immediate and, second, we have (note that $\log(c_1)<0$)
	$$(2j)^{\sigma-1}\log(c)\le j^{\sigma-1}\log(c_1)\Leftrightarrow 2(2j)^{\sigma-1}\log(c_1)\le j^{\sigma-1}\log(c_1)\Leftrightarrow 2(2j)^{\sigma-1}\ge j^{\sigma-1},$$
	which holds for all $j\in\NN$.
	
	So far we have verified the desired properties on the diagonal (i.e., $j=k$). However, by the equivalence stated in $(ii)$ before also $\widetilde{\mathcal{M}}^{\sigma}$ has both \hyperlink{R-mg}{$(\mathcal{M}_{\{\on{mg}\}})$} and \hyperlink{B-mg}{$(\mathcal{M}_{(\on{mg})})$} verified on the diagonal. Thus \cite[Thm. 9.5.1, Thm. 9.5.3]{dissertation} applied to $\widetilde{\mathcal{M}}^{\sigma}$ yields the conclusion and by the equivalence we are done with $\mathcal{M}^{\sigma}$, too.\vspace{6pt}
	
	$(iv)$ \hyperlink{R-rai}{$(\mathcal{M}_{\{\on{rai}\}})$} follows by repeating the estimates from $(iv)$ in Theorem \ref{fixedparammainresult}. Note that we have for all $1\le c\le c_1$, $A\ge e$ and $1\le j\le k$
	\begin{align*}
		ec^{j^{\sigma-1}}j^{c j^{\sigma-1}-1}\le Ac_1^{k^{\sigma-1}}k^{c_1 k^{\sigma-1}-1}.
	\end{align*}
	
	Let $0<c_1\le c<1$, $A\ge 1$ and $1\le j\le k$, then
	\begin{align*}
		&ec_1^{j^{\sigma-1}}j^{c_1 j^{\sigma-1}-1}\le Ac^{k^{\sigma-1}}k^{c k^{\sigma-1}-1}
		\\&
		\Leftrightarrow j^{\sigma-1}\log(c_1)-k^{\sigma-1}\log(c)\le\log(A/e)+\left(c k^{\sigma-1}-1\right)\log(k)-\left(c_1 j^{\sigma-1}-1\right)\log(j).
	\end{align*}
	We take $c_1=c$ and repeat the computation from $(iv)$ in Theorem \ref{fixedparammainresult} when $\tau$ is replaced by $c$. This should be compared with $(ii)$ and recall that each strongly log-convex sequence satisfying $m_0=M_0=1$ has the property that $j\mapsto(m_j)^{1/j}$ is nondecreasing.\vspace{6pt}\vspace{6pt}
	
	$(v)$ This follows by $(i)$, $(iii)$ and $(iv)$; see \cite[Lemma 1 $(1)$]{characterizationstabilitypaper}.\vspace{6pt}
	
	$(vi)$ For all $h\ge 1$ (large) and all $0<c<c_1$ we can find some constant $A\ge 1$ (large) such that for all $j\in\NN_{>0}$:
	$$h^jM^{(c)}_j=h^jc^{j^{\sigma}}j^{cj^{\sigma}}\le Ac_1^{j^{\sigma}}j^{c_1j^{\sigma}}=AM^{(c_1)}_j\Leftrightarrow h\le A^{1/j}\left(\frac{c_1}{c}\right)^{j^{\sigma}}j^{j^{\sigma}(c_1-c)}.$$
	
	$(vii)$ The same estimate as given in $(vi)$ also yields the following property for $\mathcal{M}^{\sigma}$:
	$$\forall\;0<c<c_1:\;\;\;M^{(c)}\hyperlink{triangle}{\vartriangleleft}M^{(c_1)},$$
	hence both desired properties.
	
	$(viii)$ Follows analogously as in $(vi)$ in Theorem \ref{fixedparammainresult}.
	
	\qed\enddemo

	
	\subsection{PTT-limit classes as Braun-Meise-Taylor classes}
	\label{sec:function}
	\par
	Let $\sigma>1$ be given. Then, on the one hand $\mathcal{E}_{[\mathcal{M}^\sigma]}$ \textit{cannot} be described by a single weight sequence which follows by $(vii)$ in Theorem \ref{matrixPsigmaproperties}. On the other hand, the aim of this section is to show that it actually can be understood as a Braun-Meise-Taylor class. This question has very recently been studied and solved in \cite{TT2022} (for the modified defined limit classes mentioned in Remark \ref{alternativedefrem}). There the authors give precise asymptotics of $\omega$ in terms of the so-called \emph{Lambert function} $W$; cf. \cite[Prop. 3.1]{TT2022}. However, we give an independent proof of their main result by involving only weight matrix techniques.\vspace{6pt}
	
	Let us emphasize that for $\sigma=1$ this statement is not true: By \eqref{weightmatrixsigma} the matrix $\mathcal{M}^1:=\{M^{(\tau,\tau,1)}: ~\tau >0\}$ consists of sequences $M^{(\tau,\tau,1)}_j=\tau^jj^{\tau j}$ and hence $\mathcal{M}^1$ is equivalent to the Gevrey matrix $\mathcal{G}_0=\{(j!^{\tau})_{j\in\NN}: \tau>0\}$. From (the first paragraph in the proof of) \cite[Thm. 5.22]{compositionpaper} it follows that the corresponding weight matrix class cannot be described by a space given by a log-convex $M$ (in particular $M\in\hyperlink{LCset}{\mathcal{LC}}$) or by a weight function $\omega$.\vspace{6pt}
	
	We prove now an abstract result on the connection between weight sequences and their associated weight functions which is important in the ultradifferentiable setting on its own.
	
	\begin{lemma}\label{assofctrelationcharlemma}
		Let $M,N\in\hyperlink{LCset}{\mathcal{LC}}$. Then the following are equivalent:
		\begin{itemize}
			\item[$(i)$] $M$ and $N$ are related by
			\begin{equation}\label{assofctrelationcharlemmaequ}
				\exists\;c\in\NN_{>0}\;\exists\;A\ge 1\;\forall\;j\in\NN:\;\;\;N_j\le A(M_{cj})^{1/c}.
			\end{equation}
			\item[$(ii)$] The associated weight functions are related by
			$$\omega_M(t)=O(\omega_N(t)),\;\;\;t\rightarrow+\infty.$$
		\end{itemize}
		Moreover, the following are equivalent:
		\begin{itemize}
			\item[$(i)'$] $M$ and $N$ are related by
			\begin{equation}\label{assofctrelationcharlemmaequstrong}
				\forall\;c\in\NN_{>0}\;\exists\;A\ge 1\;\forall\;j\in\NN:\;\;\;(N_{cj})^{1/c}\le AM_j.
			\end{equation}
			\item[$(ii)'$] The associated weight functions are related by
			$$\omega_M(t)=o(\omega_N(t)),\;\;\;t\rightarrow+\infty.$$
		\end{itemize}
	\end{lemma}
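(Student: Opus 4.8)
The plan is to derive both equivalences from the Legendre-type duality available for sequences in $\mathcal{LC}$: on the one hand $\omega_M(t)=\sup_{j\in\NN}\log(t^j/M_j)$ by definition, and on the other hand $M_j=\sup_{t\ge 0}t^j\exp(-\omega_M(t))$ by \eqref{Prop32Komatsu} (and likewise for $N$). The only auxiliary facts I need are the elementary identity $\bigl(\sup_{t}f(t)\bigr)^{c}=\sup_{t}f(t)^{c}$ for $f\ge 0$ and $c\in\NN_{>0}$; that a normalized log-convex sequence is nondecreasing (so $M_j\ge 1$ and $\omega_M$ vanishes on $[0,1]$); that $\omega_M$ and $\omega_N$ are nondecreasing and tend to $+\infty$; and that $\omega_N$, being a normalized pre-weight function, satisfies $\log t=o(\omega_N(t))$.

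For $(ii)\Rightarrow(i)$: from $\omega_M(t)=O(\omega_N(t))$ pick $c\in\NN_{>0}$ and $K\ge 0$ with $\omega_M(t)\le c\,\omega_N(t)+K$ for \emph{all} $t\ge 0$ --- the finitely many small $t$ are absorbed into $K$ by monotonicity of $\omega_M$. Then $\exp(-c\,\omega_N(t))\le e^{K}\exp(-\omega_M(t))$, so by \eqref{Prop32Komatsu} and the sup-power identity $N_j^{c}=\sup_{t}t^{cj}\exp(-c\,\omega_N(t))\le e^{K}\sup_{t}t^{cj}\exp(-\omega_M(t))=e^{K}M_{cj}$, which is \eqref{assofctrelationcharlemmaequ} with $A=e^{K/c}$. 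For $(i)\Rightarrow(ii)$: given $N_j\le A(M_{cj})^{1/c}$, write an arbitrary index as $k=cj+r$ with $0\le r<c$; since $M$ is nondecreasing, $M_k\ge M_{cj}\ge A^{-c}N_j^{c}$, hence for $t\ge 1$, $t^k/M_k\le A^{c}t^{r}(t^j/N_j)^{c}\le A^{c}t^{c-1}\exp(c\,\omega_N(t))$, while $\omega_M(t)=0$ for $t\le 1$. Taking the supremum over $k$ gives $\omega_M(t)\le c\log A+(c-1)\log t+c\,\omega_N(t)$, and since $\log t=o(\omega_N(t))$ the polynomial term is negligible, so $\omega_M(t)=O(\omega_N(t))$.

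The ``moreover'' part is the same computation with the quantifier on $c$ moved outside. For $(i)'\Rightarrow(ii)'$: from $(N_{cj})^{1/c}\le A_{c}M_j$ we get, for every $c\in\NN_{>0}$, $\omega_N(t)\ge\sup_{j}\log(t^{cj}/N_{cj})\ge c\bigl(\omega_M(t)-\log A_c\bigr)$ for all $t$; since $\omega_M(t)\to+\infty$ this forces $\liminf_{t\to\infty}\omega_N(t)/\omega_M(t)\ge c$ for every $c$, i.e.\ $\omega_M(t)=o(\omega_N(t))$. For $(ii)'\Rightarrow(i)'$: fix $c$; from $\omega_M(t)=o(\omega_N(t))$ obtain $c\,\omega_M(t)\le\omega_N(t)+K_c$ for all $t\ge 0$ (again absorbing small $t$), and then $N_{cj}=\sup_{t}t^{cj}\exp(-\omega_N(t))\le e^{K_c}\sup_{t}\bigl(t^j\exp(-\omega_M(t))\bigr)^{c}=e^{K_c}M_j^{c}$, i.e.\ \eqref{assofctrelationcharlemmaequstrong} with $A=e^{K_c}$.

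I expect the only delicate point to be the translation between the asymptotic statements ($O$, $o$, as $t\to+\infty$) and inequalities holding pointwise on all of $[0,+\infty)$, which is what makes the duality \eqref{Prop32Komatsu} directly usable; this is handled using the normalization of $M$ and $N$ (so the associated functions vanish near the origin) together with the monotonicity of $\omega_M$, $\omega_N$. The one genuinely sequence-theoretic input is the step $M_{cj+r}\ge M_{cj}$ in $(i)\Rightarrow(ii)$ and the absorption of the resulting factor $t^{c-1}$ via $\log t=o(\omega_N(t))$; everything else is bookkeeping on the Legendre transform.
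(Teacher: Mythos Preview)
Your proof is correct. The implications $(ii)\Rightarrow(i)$ and $(ii)'\Rightarrow(i)'$ are essentially identical to the paper's, both being the obvious application of \eqref{Prop32Komatsu}.

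Where you genuinely differ is in $(i)\Rightarrow(ii)$ and $(i)'\Rightarrow(ii)'$. The paper introduces the auxiliary sequence $\widetilde{M}^c_j:=(M_{cj})^{1/c}$, recognizes that $\widetilde{M}^c_j=\exp(\varphi^{*}_{c^{-1}\omega_M}(j))$, and then invokes an external result (\cite[Lemma 2.5]{sectorialextensions}) to obtain the equivalence $c^{-1}\omega_M\sim\omega_{\widetilde{M}^c}$, which is combined with the trivial estimate $\omega_{\widetilde{M}^c}\le\omega_N+\log A$. Your argument is more elementary and fully self-contained: for $(i)\Rightarrow(ii)$ you decompose an arbitrary index as $k=cj+r$, use monotonicity of $M$ to bound $M_k\ge M_{cj}\ge A^{-c}N_j^c$, and then absorb the residual factor $t^{c-1}$ via the pre-weight property $\log t=o(\omega_N(t))$; for $(i)'\Rightarrow(ii)'$ you simply restrict the supremum defining $\omega_N$ to the subsequence $(cj)_j$. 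What your approach buys is that it avoids any black-box reference and makes the argument transparent; what the paper's approach buys is a slightly cleaner final estimate (no $\log t$ term to absorb) and a structural identification of $\widetilde{M}^c$ with a Young-conjugate sequence, which may be useful elsewhere. One cosmetic slip: in $(ii)'\Rightarrow(i)'$ the constant you obtain is actually $A=e^{K_c/c}$, though $A=e^{K_c}$ is of course also valid since it is larger.
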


	\demo{Proof}
	$(ii)\Rightarrow(i)$ We have $\omega_M(t)\le c\omega_N(t)+c$ for some $c\ge 1$ (large) and all $t\ge 0$. W.l.o.g. take $c\in\NN_{>0}$ and then \eqref{Prop32Komatsu} yields for all $j\in\NN$:
	\begin{align*}
		M_{cj}&=\sup_{t\ge 0}\frac{t^{cj}}{\exp(\omega_M(t))}\ge\frac{1}{e^c}\sup_{t\ge 0}\frac{t^{cj}}{\exp(c\omega_N(t))}=\frac{1}{e^c}\left(\sup_{t\ge 0}\frac{t^j}{\exp(\omega_N(t))}\right)^c=\frac{1}{e^c}N_j^c.
	\end{align*}
	Thus \eqref{assofctrelationcharlemmaequ} is shown with $A=e$.\vspace{6pt}
	
	$(i)\Rightarrow(ii)$ For given $M\in\RR_{>0}^{\NN}$ and $c\in\NN_{>0}$ we set
	\begin{equation*}\label{sequenceLC}
		\widetilde{M}^c_j:=(M_{cj})^{1/c},
	\end{equation*}
	hence $\widetilde{M}^1\equiv M$ is clear. If $M$ is log-convex, then each $\widetilde{M}^c$ as well and $\widetilde{M}^c_0=1$ if $M_0=1$. If $M\in\hyperlink{LCset}{\mathcal{LC}}$, then $\widetilde{M}^c\in\hyperlink{LCset}{\mathcal{LC}}$ (for some/any $c\in\NN_{>0}$). Thus by definition and assumption we get
	\begin{equation}\label{auxiliaryequ}
		\exists\;A\ge 1\;\forall\;t\ge 0:\;\;\;\omega_{\widetilde{M}^c}(t)\le\omega_N(t)+\log(A).
	\end{equation}
	We obtain for all $j\in\NN$ and $c\in\NN_{>0}$
	$$\widetilde{M}^c_j:=(M_{cj})^{1/c}=\left(\sup_{t\ge 0}\frac{t^{cj}}{\exp(\omega_M(t))}\right)^{1/c}.$$
	Moreover, since $M\in\hyperlink{LCset}{\mathcal{LC}}$ we get $\omega_M(t)=0$ for $t\in[0,1]$ (i.e., normalization) and so:
	\begin{align*}
		\widetilde{M}^c_j&=\sup_{t\ge 0}\frac{t^j}{\exp(c^{-1}\omega_M(t))}=\sup_{t\ge 1}\frac{t^j}{\exp(c^{-1}\omega_M(t))}=\exp(\sup_{t\ge 1}j\log(t)-c^{-1}\omega_M(t))
		\\&
		=\exp(\sup_{s\ge 0}j s-c^{-1}\omega_M(e^s))=:\exp(\varphi^{*}_{c^{-1}\omega_M}(j)).
	\end{align*}

	Thus we may apply \cite[Lemma 2.5]{sectorialextensions} to $\omega\equiv c^{-1}\omega_M$ and get
	\[
	c^{-1}\omega_M\hyperlink{sim}{\sim}\omega_{\widetilde{M}^c}.
	\]
	
	More precisely, by setting the weight matrix parameter $x=1$, we see
	\begin{equation}\label{goodequivalenceclassic}
		\exists\;D>0\;\forall\;t\ge 0:\;\;\;\omega_{\widetilde{M}^c}(t)\le c^{-1}\omega_M(t)\le 2\omega_{\widetilde{M}^c}(t)+D.
	\end{equation}
	Combining \eqref{goodequivalenceclassic} with \eqref{auxiliaryequ} immediately yields
	$$\frac{1}{2c}\omega_M(t)-\frac{D}{2}\le\omega_{\widetilde{M}^c}(t)\le\omega_N(t)+\log(A)\Longrightarrow\omega_M(t)\le 2c\omega_N(t)+Dc+2c\log(A).$$
	Thus $\omega_M(t)=O(\omega_N(t))$ as $t\rightarrow+\infty$ is verified.\vspace{6pt}
	
	$(ii)'\Rightarrow(i)'$ For all $c\in\NN_{>0}$ we can find $D\ge 1$ such that $\omega_M(t)\le\frac{1}{c}\omega_N(t)+D$ for all $t\ge 0$ and so, analogously as before, we obtain \eqref{assofctrelationcharlemmaequstrong} with $A:=e^D$.
	
	$(i)'\Rightarrow(ii)'$ Using the notation from above, \eqref{auxiliaryequ} transfers into
	\begin{equation}\label{auxiliaryequstrong}
		\forall\;c\in\NN_{>0}\;\exists\;A\ge 1\;\forall\;t\ge 0:\;\;\;\omega_M(t)\le\omega_{\widetilde{N}^c}(t)+\log(A).
	\end{equation}
	Then we follow the arguments in $(i)\Rightarrow(ii)$ and combine \eqref{auxiliaryequstrong} with the first half from \eqref{goodequivalenceclassic} applied to $N$ in order to get
	$$\forall\;c\in\NN_{>0}\;\exists\;A\ge 1\;\forall\;t\ge 0:\;\;\;\omega_M(t)\le c^{-1}\omega_N(t)+\log(A).$$
	Thus $\omega_M(t)=o(\omega_N(t))$ is verified.
	\qed\enddemo
	
	The importance of Lemma \ref{assofctrelationcharlemma} is that it enables the possibility to express all requirements in \cite[Cor. 3.17 $(ii)$]{testfunctioncharacterization} purely in terms of the given matrix $\mathcal{N}$ directly:
	
	\begin{corollary}\label{assofctrelationcharlemmacor}
		Let $\mathcal{M}$ be \hyperlink{Msc}{$(\mathcal{M}_{\on{sc}})$}. Then as locally convex vector spaces
		$$\mathcal{E}_{[\mathcal{M}]}=\mathcal{E}_{[\omega]},$$
		with $\omega$ being a weight function in the sense of Braun-Meise-Taylor (see \cite{BraunMeiseTaylor90}, \cite{testfunctioncharacterization}) if and only if there exists a \hyperlink{Msc}{$(\mathcal{M}_{\on{sc}})$} matrix $\mathcal{N}=\{N^{(\alpha)}: \alpha>0\}$ which is $R$- resp. $B$-equivalent to $\mathcal{M}$ and such that
		\begin{itemize}
			\item[$(*)$] $\mathcal{N}$ has $(\mathcal{M}_{[\on{L}]})$,
			
			\item[$(*)$] $\mathcal{N}$ has $(\mathcal{M}_{[\on{mg}]})$,
			
			\item[$(*)$] $\mathcal{N}$ has (cf. \eqref{assofctrelationcharlemmaequ})
			\begin{equation}\label{assofctrelationcharlemmacorequ}
				\forall\;\alpha,\beta>0\;\exists\;c\in\NN_{>0}\;\exists\;A\ge 1\;\forall\;j\in\NN:\;\;\;N^{(\alpha)}_j\le A(N^{(\beta)}_{cj})^{1/c}.
			\end{equation}
		\end{itemize}
	\end{corollary}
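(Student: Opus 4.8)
The plan is to derive the Corollary from \cite[Cor.~3.17~$(ii)$]{testfunctioncharacterization} by using Lemma~\ref{assofctrelationcharlemma} to rewrite, purely in terms of the matrix $\mathcal{N}$, the condition on associated weight functions that occurs there. Recall that the cited result characterizes, for a \hyperlink{Msc}{$(\mathcal{M}_{\on{sc}})$} matrix $\mathcal{M}$, the equality $\mathcal{E}_{[\mathcal{M}]}=\mathcal{E}_{[\omega]}$ (as locally convex vector spaces) with $\omega$ a Braun-Meise-Taylor weight function: it holds if and only if $\mathcal{M}$ is $R$- resp.\ $B$-equivalent to some \hyperlink{Msc}{$(\mathcal{M}_{\on{sc}})$} matrix $\mathcal{N}=\{N^{(\alpha)}:\alpha>0\}$ which has $(\mathcal{M}_{[\on{L}]})$, $(\mathcal{M}_{[\on{mg}]})$, and such that the associated weight functions $\omega_{N^{(\alpha)}}$ are all pairwise equivalent, i.e.
\[
\forall\;\alpha,\beta>0:\;\;\;\omega_{N^{(\alpha)}}(t)=O(\omega_{N^{(\beta)}}(t)),\qquad t\rightarrow+\infty;
\]
in that case one may take $\omega:=\omega_{N^{(1)}}$, which is a genuine weight function (i.e.\ $\omega(2t)=O(\omega(t))$) thanks to $(\mathcal{M}_{[\on{mg}]})$, exactly as in the cited corollary. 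Hence it only remains to show that the displayed condition is equivalent to \eqref{assofctrelationcharlemmacorequ}.

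For this I would fix a pair $\alpha,\beta>0$ and apply the first equivalence $(i)\Leftrightarrow(ii)$ of Lemma~\ref{assofctrelationcharlemma} with $M:=N^{(\beta)}$ and $N:=N^{(\alpha)}$ --- this is legitimate since $\mathcal{N}$ being \hyperlink{Msc}{$(\mathcal{M}_{\on{sc}})$} means $N^{(\alpha)},N^{(\beta)}\in\hyperlink{LCset}{\mathcal{LC}}$. The lemma then says that $\omega_{N^{(\beta)}}(t)=O(\omega_{N^{(\alpha)}}(t))$ as $t\rightarrow+\infty$ is equivalent to the existence of $c\in\NN_{>0}$ and $A\ge 1$ with $N^{(\alpha)}_j\le A(N^{(\beta)}_{cj})^{1/c}$ for all $j\in\NN$, which is exactly the $j$-indexed assertion inside \eqref{assofctrelationcharlemmacorequ} for that pair. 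Quantifying over all $\alpha,\beta>0$ --- the interchange of the roles of $\alpha$ and $\beta$ when passing between the $O$-relation and the sequence inequality is harmless, since both indices are universally quantified --- turns the statement ``the $\omega_{N^{(\alpha)}}$ are pairwise equivalent'' into precisely \eqref{assofctrelationcharlemmacorequ}. Substituting this reformulation of the third bullet point into \cite[Cor.~3.17~$(ii)$]{testfunctioncharacterization} then yields the Corollary simultaneously in the Roumieu and the Beurling case.

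Essentially all of the mathematical content is absorbed into Lemma~\ref{assofctrelationcharlemma} (whose proof in turn rests on \eqref{Prop32Komatsu} and \cite[Lemma~2.5]{sectorialextensions}), so I do not expect a genuine obstacle here; the only delicate points are reproducing the precise hypotheses of the cited corollary and keeping track of which index carries the dilation $c$ when translating between the two equivalent formulations.
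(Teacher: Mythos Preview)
Your proposal is correct and matches the paper's approach exactly: the sentence preceding the Corollary states that Lemma~\ref{assofctrelationcharlemma} ``enables the possibility to express all requirements in \cite[Cor.~3.17~$(ii)$]{testfunctioncharacterization} purely in terms of the given matrix $\mathcal{N}$ directly,'' and the Corollary is then stated without further proof. Your identification $M:=N^{(\beta)}$, $N:=N^{(\alpha)}$ in Lemma~\ref{assofctrelationcharlemma} and the observation about the universal quantifiers on $\alpha,\beta$ are precisely what is needed to carry this out.
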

	
	\begin{remark}
		Any $\omega_{N^{(\alpha)}}$ (for $N^{(\alpha)}\in \mathcal{N}$) is a valid choice for $\omega$ in Corollary \ref{assofctrelationcharlemmacor}.
		
		Note that \eqref{assofctrelationcharlemmacorequ} is clearly preserved under R- and B-equivalence of weight matrices.
	\end{remark}
	
	In particular we can apply this statement to PTT-limit classes and get the following.
	
	\begin{theorem}\label{varyingtaucharact}
		Let $\sigma>1$ and put $\omega^{(\sigma)} := \omega_{M^{(1,1,\sigma)}}$; i.e. the associated weight function of $M^{(1,1,\sigma)}=(j^{j^{\sigma}})_{j\in\NN}$. Then as locally convex vector spaces we get
		\begin{equation}\label{varyingtaucharactequ}
			\mathcal{E}_{[\mathcal{M}^\sigma]}=\mathcal{E}_{[\omega^{(\sigma)}]}.
		\end{equation}
		Moreover, all associated weight functions of the matrix $\mathcal{M}^\sigma$ are equivalent, i.e.,
		$$\forall\;h,h', \tau, \tau'>0:\;\;\;\omega_{M^{(h,\tau,\sigma)}}\hyperlink{sim}{\sim}\omega_{M^{(h',\tau',\sigma)}},$$
		and consequently in \eqref{varyingtaucharactequ} we can replace $\omega^{(\sigma)}$ by any $\omega_{M^{(h,\tau,\sigma)}}$.
	\end{theorem}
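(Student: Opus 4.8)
The plan is to derive the equality \eqref{varyingtaucharactequ} from Corollary \ref{assofctrelationcharlemmacor}. Since $R$- and $B$-equivalent weight matrices define the same classes, I may replace $\mathcal{M}^{\sigma}$ by the standard log-convex matrix $\widetilde{\mathcal{M}}^{\sigma}$ of Theorem \ref{matrixPsigmaproperties}$(ii)$ and use $\widetilde{\mathcal{M}}^{\sigma}$ itself as the witness matrix in that corollary. By Theorem \ref{matrixPsigmaproperties}$(iii)$ and $(vi)$ the matrix $\mathcal{M}^{\sigma}$ --- hence also $\widetilde{\mathcal{M}}^{\sigma}$, as these properties are stable under $R$- and $B$-equivalence --- has $(\mathcal{M}_{[\mathrm{mg}]})$ and $(\mathcal{M}_{[\mathrm{L}]})$, so the only hypothesis of Corollary \ref{assofctrelationcharlemmacor} still requiring verification is the comparison \eqref{assofctrelationcharlemmacorequ}.

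As \eqref{assofctrelationcharlemmacorequ} is preserved under $R$- and $B$-equivalence (remark after Corollary \ref{assofctrelationcharlemmacor}), it is enough to check it for $\mathcal{M}^{\sigma}$ directly. Given $\tau,\tau'>0$ I look for $c\in\NN_{>0}$ and $A\ge 1$ with
\[
M^{(\tau,\tau,\sigma)}_j=\tau^{j^{\sigma}}j^{\tau j^{\sigma}}\le A\bigl(M^{(\tau',\tau',\sigma)}_{cj}\bigr)^{1/c}=A\,(\tau')^{c^{\sigma-1}j^{\sigma}}(cj)^{\tau'c^{\sigma-1}j^{\sigma}},\qquad j\in\NN
\]
(the case $j=0$ being trivial by the convention $0^0:=1$). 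Passing to logarithms and dividing by $j^{\sigma}$ this reduces to
\[
\log\tau+\tau\log j\le c^{\sigma-1}\log\tau'+\tau'c^{\sigma-1}\log c+\tau'c^{\sigma-1}\log j+\tfrac{1}{j^{\sigma}}\log A .
\]
Because $\sigma>1$, the exponent $c^{\sigma-1}$ is unbounded in $c\in\NN_{>0}$, so I may fix $c$ large enough that $\tau'c^{\sigma-1}\ge\tau$ (which, together with $cj\ge j\ge 1$, absorbs the $\log j$-terms) and also $(\tau')^{c^{\sigma-1}}c^{\tau'c^{\sigma-1}}\ge\tau$; then the inequality holds for all $j\ge 1$, already with $A=1$. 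This is precisely the point where the assumption $\sigma>1$ is used; for $\sigma=1$ it fails, in accordance with the observation preceding the theorem that $\mathcal{E}_{[\mathcal{M}^{1}]}$ is not of Braun-Meise-Taylor type.

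Corollary \ref{assofctrelationcharlemmacor} now yields $\mathcal{E}_{[\widetilde{\mathcal{M}}^{\sigma}]}=\mathcal{E}_{[\omega]}$ for a Braun-Meise-Taylor weight function $\omega$, and by the remark following the corollary one may take $\omega=\omega_{\widetilde{M}^{(\alpha)}}$ for any sequence of $\widetilde{\mathcal{M}}^{\sigma}$. It then remains to prove the ``moreover'' part and thereby pin $\omega$ down to $\omega^{(\sigma)}=\omega_{M^{(1,1,\sigma)}}$. For $h,h'\ge 1$ the sequences $M^{(h,\tau,\sigma)}=(h^{j^{\sigma}}j^{\tau j^{\sigma}})_j$ and $M^{(h',\tau',\sigma)}$ lie in $\mathcal{LC}$ (normalized by $0^0:=1$, log-convex because $j\mapsto j^{\sigma}\log j$ is convex, and $(M_j)^{1/j}\to\infty$); repeating the computation of the previous paragraph with the two parameters now kept independent, and choosing $c$ so that $\tau c^{\sigma-1}\ge\tau'$, gives $M^{(h',\tau',\sigma)}_j\le A\bigl(M^{(h,\tau,\sigma)}_{cj}\bigr)^{1/c}$ for all $j$, and symmetrically, whence Lemma \ref{assofctrelationcharlemma} yields $\omega_{M^{(h,\tau,\sigma)}}\sim\omega_{M^{(h',\tau',\sigma)}}$. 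The cases $h<1$ or $h'<1$ are reduced to this by first passing, as in Remark \ref{log-remark}, to an equivalent log-convex sequence; such a modification differs from the original one only by a fixed multiplicative constant for all large $j$, hence alters the associated weight function by a bounded additive amount and leaves its $\sim$-class unchanged. In particular $\omega_{\widetilde{M}^{(\alpha)}}\sim\omega_{M^{(\alpha,\alpha,\sigma)}}\sim\omega^{(\sigma)}$, and since $\mathcal{E}_{[\omega]}$ depends on $\omega$ only through its $\sim$-class we conclude $\mathcal{E}_{[\mathcal{M}^{\sigma}]}=\mathcal{E}_{[\widetilde{\mathcal{M}}^{\sigma}]}=\mathcal{E}_{[\omega^{(\sigma)}]}$, with $\omega^{(\sigma)}$ replaceable by any $\omega_{M^{(h,\tau,\sigma)}}$.

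The main obstacle is the elementary inequality \eqref{assofctrelationcharlemmacorequ} together with its two-parameter version: one must balance the four super-exponential factors $\tau^{j^{\sigma}}$, $(\tau')^{c^{\sigma-1}j^{\sigma}}$, $j^{\tau j^{\sigma}}$ and $(cj)^{\tau'c^{\sigma-1}j^{\sigma}}$ against each other and recognize that the dilation $j\mapsto cj$, amplified through the exponent $c^{\sigma-1}$ by the hypothesis $\sigma>1$, leaves enough slack; everything else amounts to invoking Theorem \ref{matrixPsigmaproperties}, Corollary \ref{assofctrelationcharlemmacor} and Lemma \ref{assofctrelationcharlemma}.
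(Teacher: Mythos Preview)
Your proof is correct and follows essentially the same approach as the paper: both reduce the statement to Corollary \ref{assofctrelationcharlemmacor}, take $(\mathcal{M}_{[\mathrm{L}]})$ and $(\mathcal{M}_{[\mathrm{mg}]})$ from Theorem \ref{matrixPsigmaproperties}, and verify \eqref{assofctrelationcharlemmacorequ} for $\mathcal{M}^{\sigma}$ by the same logarithmic computation exploiting $c^{\sigma-1}\to\infty$. You are in fact more careful than the paper in two places: you pass explicitly to the standard log-convex matrix $\widetilde{\mathcal{M}}^{\sigma}$ so that the hypotheses of Corollary \ref{assofctrelationcharlemmacor} are literally met, and you spell out the ``moreover'' part via Lemma \ref{assofctrelationcharlemma} (which the paper leaves implicit).
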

	
	\begin{proof}
		We only have to verify \eqref{assofctrelationcharlemmacorequ} in Corollary \ref{assofctrelationcharlemmacor} for the matrix $\mathcal{M}^\sigma$ (the first two conditions are contained in Theorem \ref{matrixPsigmaproperties}). But this is clear for our concrete matrix $\mathcal{M}^\sigma$  since \eqref{assofctrelationcharlemmacorequ} holds for $\beta\geq \alpha$ taking $c=A=1$ and for $\beta<\alpha$ we have that
		$$j\mapsto j^\sigma \big(\log(\alpha) - c^{\sigma-1} \log (\beta) + (\alpha-\beta c^{\sigma-1}) \log (j) - (\beta c^{\sigma-1})\log(c)   \big)$$
		is bounded from above for $c$ large enough.	\end{proof}
	
	\begin{remark}
		The previous result fails for $\sigma=1$: The Gevrey matrix $\mathcal{G}_0(=\mathcal{M}^1)$ clearly satisfies $(\mathcal{M}_{[\on{L}]})$ and $(\mathcal{M}_{[\on{mg}]})$ but \eqref{assofctrelationcharlemmacorequ} is violated when taking e.g. $\alpha=2\beta$.
	\end{remark}


	\subsection{Results for PTT-limit classes}
	\label{sec:pttlimresults}
	As seen in the previous section the PTT-limit classes can be represented by certain Braun-Meise-Taylor classes (defined by the weight function $\omega^{(\sigma)}:= \omega_{M^{(1,1,\sigma)}}$).
	Let us now give additional properties available for PTT-limit classes that follow from this representation (and the properties listed in Theorem \ref{matrixPsigmaproperties}).
	

	\begin{itemize}

		\item[$(a)$] \textbf{Stability properties (\cite{characterizationstabilitypaper}):}
		$\mathcal{E}_{[\mathcal{M}^\sigma]}$ is...
		\begin{itemize}
			\item stable under composition,
			\item stable under solving ODEs,
			\item stable under inversion,
			\item inverse-closed.
		\end{itemize}

		This follows since by $(i)$, $(iii)$ and $(iv)$ in Theorem \ref{matrixPsigmaproperties} the classes $\mathcal{E}_{[\mathcal{M}^{\sigma}]}$ satisfy all necessary properties such that Theorems 5 and 6 from \cite{characterizationstabilitypaper} are applicable.
		
		\item[$(b)$] \textbf{Almost analytic extensions (\cite{almostanalytic}):} $\mathcal{E}_{[\mathcal{M}^\sigma]}$-regularity of a function can be characterized by \emph{almost analytic extension}. This means mutatis mutandis the same as already outlined in Section \ref{sec:pttresults}.

In addition {\itshape both} \cite[$(7.1), (7.2)$]{almostanalytic} hold true; $(7.1)$ is precisely \hyperlink{R-mg}{$(\mathcal{M}_{\{\on{mg}\}})$} and $(7.2)$ is \hyperlink{R-FdB}{$(\mathcal{M}_{\{\on{FdB}\}})$}. In particular one can deduce (among other things) an ultradifferentiable elliptic regularity theorem.
		
		\item[$(c)$] \textbf{The image of the Borel map (\cite{petzsche}, \cite{surjectivity}, \cite{maximal}, \cite{optimalRoumieu}, \cite{optimalBeurling}):}
		We have the following description of the image of the Borel map:
		\[
		j^\infty_0(\mathcal{E}_{[\mathcal{M}^\sigma]})=\Lambda_{[\mathcal{M}^\sigma]}.
		\]
		By $(viii)$ we have that each $M^{(\tau,\tau,\sigma)}$ has \hyperlink{gamma1}{$(\gamma_1)$} and the rest follows from the results of the aforementioned papers.
		
		\item[$(d)$] \textbf{Cartesian closedness (\cite{convenientweightmatrix}):}
		For $E_1,E_2,F$ \emph{convenient} vector spaces and $U_i \subseteq E_i$ $c^\infty$-open (for the definitions consult \cite{convenientweightmatrix}, or the thorough treatment \cite{KM97}) one has as convenient vector spaces
		\[
		\mathcal{E}_{[\mathcal{M}^\sigma]}(U_1 \times U_2, F) \cong \mathcal{E}_{[\mathcal{M}^\sigma]}(U_1, \mathcal{E}_{[\mathcal{M}^\sigma]}(U_2, F)).
		\]
		
		This follow since by $(i)$, $(ii)$, $(iii)$ and $(iv)$ the classes $\mathcal{E}_{[\mathcal{M}^{\sigma}]}$ form cartesian closed categories  due to \cite[Thm. 5.9, 6.2]{convenientweightmatrix}.
		
		\item[$(e)$] \textbf{A result on powers (\cite{joris1}, \cite{joris2}):}
		If for two integers $p,q$ with $\gcd(p,q)=1$ and some function $f$, we have $f^p,f^q \in \mathcal{E}_{[\mathcal{M}^\sigma]}$, then we already have $f \in \mathcal{E}_{[\mathcal{M}^\sigma]}$.\par
		This follows since by $(iii)$ we have that $\mathcal{M}^\sigma$ satisfies $(\mathcal{M}_{[\on{mg}]})$, and, by $(ii)$, $\widetilde{\mathcal{M}}^{\sigma}$ has the desired properties. Thus \cite[Thm. 1.1, Thm. 4.1]{joris2} (cf. also \cite[Thm. 4.2]{joris1}) is applicable and immediately gives the claim.
		

		\item[$(f)$] \textbf{Almost harmonic extensions (\cite{debrouwvindas2020}):} $\mathcal{E}_{[\mathcal{M}^\sigma]}$-regularity can be characterized by \emph{almost harmonic extension}. The rest is mutatis mutandis the same as outlined in Section \ref{sec:pttresults}.
		
		\item[$(g)$] \textbf{Nuclearity (\cite{testfunctioncharacterization}):} By $(iii)$ the classes $\mathcal{E}_{\{\mathcal{M}^{\sigma}\}}(U)$ and $\mathcal{E}_{(\mathcal{M}^{\sigma})}(U)$  are nuclear;
		see \cite[Prop. 7.2]{testfunctioncharacterization}.
	\end{itemize}

	\subsection{A further possible result}
	\label{sec:possibleresult}
	In \cite{debrouwkalmes2022}, the authors consider ultradistributional boundary values of constant coefficient hypoelliptic partial differential operators. \emph{Ultradistributional} is understood in the framework of Denjoy-Carleman classes, i.e., classes defined via weight sequences.
	They require, apart from the normalization condition $1=M_0=M_1$ as basic assumptions for $M$ log-convexity, \hyperlink{mg}{$(\on{mg})$}, \hyperlink{nq}{$(\on{nq})$} and \hyperlink{beta3}{$(\beta_3)$}; see \cite[Def. 2.6]{debrouwkalmes2022} (there \hyperlink{beta3}{$(\beta_3)$} is denoted by $(M.2)^{*}$).
	
	It seems to be reasonable that the results from \cite{debrouwkalmes2022} can be transferred to $\mathcal{E}_{[\mathcal{M}^\sigma]}$: Note that each sequence satisfies all standard assumptions except \hyperlink{mg}{$(\on{mg})$} because by $(viii)$ in Theorem \ref{matrixPsigmaproperties} and \cite[Thm. 3.11]{index} we get \hyperlink{beta3}{$(\beta_3)$} and even condition $(M.4)_a$ for any $a>0$; see \cite[Def. 2.7]{debrouwkalmes2022} resp. \cite[Thm. 3.11 $(ii)$]{index}. (Also $M^{(\tau,\tau,\sigma)}_1=1$ is violated for $\tau\neq 1$ but which can be achieved by switching to an equivalent weight matrix.)
	
	Then one can try to compensate the failure of \hyperlink{mg}{$(\on{mg})$} by applying \hyperlink{R-mg}{$(\mathcal{M}_{\{\on{mg}\}})$} resp. \hyperlink{B-mg}{$(\mathcal{M}_{(\on{mg})})$} instead and which is valid by $(iii)$ in Theorem \ref{matrixPsigmaproperties}.\vspace{6pt}
	
	However, by $(vii)$ in Theorem \ref{fixedparammainresult} both generalized moderate-growth-type conditions fail for $\mathcal{M}^{\tau,\sigma}$ and so a generalization of the proofs from \cite{debrouwkalmes2022} to the PTT-classes $\mathcal{E}_{[\tau,\sigma]}=\mathcal{E}_{[\mathcal{M}^{\tau,\sigma}]}$ is not clear. (The other standard properties, except $M^{(c,\tau,\sigma)}_1=1$ for each $c>0$, hold true for each sequence in $\mathcal{M}^{\tau,\sigma}$.)

	\bibliographystyle{plain}

\end{document}